\documentclass[10.5pt,a4paper]{article}

\usepackage{graphicx,latexsym,euscript,makeidx,color,bm}
\usepackage{amsmath,amsfonts,amssymb,amsthm,thmtools,mathtools,mathrsfs,enumerate}
\usepackage[colorlinks,linkcolor=blue,anchorcolor=green,citecolor=red]{hyperref}

\usepackage{tikz}
\usetikzlibrary{fit,calc,positioning} 
\tikzstyle{Block}=[rectangle,minimum width=3cm,minimum height=1cm,text centered,text width=5.2cm,draw=black]
\tikzstyle{Implication}=[rectangle,minimum width=3cm,minimum height=1cm,text centered,text width=5.2cm]
\tikzstyle{jian}=[<->, >=stealth]



\usepackage{geometry}
\geometry{left=2.5cm,right=2.5cm,top=2.5cm,bottom=2.5cm}

\sloppy\allowdisplaybreaks[4]

  \def\cA{{\cal A}}  
  \def\cB{{\cal B}}  
  \def\cC{{\cal C}}  
 \def\sD{\mathscr{D}}   
\def\dbE{\mathbb{E}}    
\def\dbF{\mathbb{F}}  \def\cF{{\cal F}}  
    
\def\dbH{\mathbb{H}}  \def\cH{{\cal H}}

  \def\cL{{\cal L}}  
  \def\cM{{\cal M}}

\def\dbP{\mathbb{P}}    
   \def\BQ{{\bm Q}} 
\def\dbR{\mathbb{R}}   \def\BR{{\bm R}} 
\def\dbS{\mathbb{S}}  \def\cS{{\cal S}} \def\BS{{\bm S}} 
\def\dbT{\mathbb{T}}    
 \def\sU{\mathscr{U}}

\def\dbX{\mathbb{X}} \def\sX{\mathscr{X}} \def\cX{{\cal X}}

\def\bx{{\bf x}}     \def\by{{\bf y}}       

\def\ss{\smallskip}             \def\hb{\hbox}
\def\ms{\medskip}              \def\ae{\hb{a.e.}}
        \def\lan{\langle}    \def\as{\hb{a.s.}}
\def\ds{\displaystyle}   \def\ran{\rangle}    
         
\def\no{\noindent}          
         
\def\nn{\nonumber}         
\def\rf{\eqref}            \def\esssup{\hb{esssup}}
\def\cd{\cdot}             \def\cds{\cdots}

\def\deq{\triangleq}     \def\({\Big (}       \def\ba{\begin{aligned}}
\def\les{\leqslant}      \def\){\Big )}       \def\ea{\end{aligned}}
\def\ges{\geqslant}      \def\[{\Big[}        \def\bel{\begin{equation}\label}
          \def\]{\Big]}        \def\ee{\end{equation}}
      \def\q{\quad}        
\def\h{\widehat}         \def\qq{\qquad}

\def\a{\alpha}  \def\G{\Gamma}      \def\Om{\Omega}  \def\om{\omega}
   \def\D{\Delta}   \def\d{\delta}        
\def\z{\zeta}   \def\Th{\Theta}  \def\th{\theta}  \def\Si{\Sigma}  \def\si{\sigma}
\def\f{\varphi} \def\L{\Lambda}  \def\l{\lambda}        \def\e{\varepsilon}
\def\t{\tau}    \def\i{\infty}   \def\k{\kappa}   
\def\ba{\begin{array}}
\def\ea{\end{array}}
\def\5n{\negthinspace \negthinspace \negthinspace \negthinspace \negthinspace }
\def\4n{\negthinspace \negthinspace \negthinspace \negthinspace }
\def\3n{\negthinspace \negthinspace \negthinspace }
\def\2n{\negthinspace \negthinspace }
\def\1n{\negthinspace }
\def\esssup{\mathop{\rm esssup}}

\newtheoremstyle{thry}
{}      
{}      
{\sl}   
{}      
{\bf}   
{.}     
{.5em}  
{}      

\theoremstyle{thry}


\newtheorem{theorem}{Theorem}[section]
\newtheorem{proposition}[theorem]{Proposition}
\newtheorem{corollary}[theorem]{Corollary}
\newtheorem{lemma}[theorem]{Lemma}

\theoremstyle{definition}
\newtheorem{definition}[theorem]{Definition}
\newtheorem{example}[theorem]{Example}

\newenvironment{taggedassumption}[1]
 {\taggedassumptionx}
 {\endtaggedassumptionx}

\theoremstyle{remark}
\newtheorem{remark}[theorem]{Remark}

\def\sqr#1#2{{\vcenter{\vbox{\hrule height.#2pt
              \hbox{\vrule width.#2pt height#1pt \kern#1pt \vrule width.#2pt}
              \hrule height.#2pt}}}}
%

%


\makeatletter
   
   \@addtoreset{equation}{section}
   \newcommand{\setword}[2]{%
   \phantomsection
   #1\def\@currentlabel{\unexpanded{#1}}\label{#2}%
   }
\makeatother


\begin{document}
\title{\bf  Linear-Quadratic Optimal Controls for Stochastic Volterra Integral Equations:
 Causal State Feedback and Path-Dependent Riccati Equations}

\author{
Hanxiao Wang\thanks{ College of Mathematics and Statistics, Shenzhen University, Shenzhen 518060, China
 (Email: {\tt hxwang@szu.edu.cn}).}
~~~~
Jiongmin Yong\thanks{Department of Mathematics, University of Central Florida,
                           Orlando, FL 32816, USA (Email: {\tt jiongmin.yong@ucf.edu}).
                          This author is supported in part by NSF Grant DMS-1812921.}
                         ~~~~
Chao Zhou\thanks{ Department of Mathematics, National University of Singapore,
                           Singapore 119076, Singapore (Email: {\tt matzc@nus.} {\tt edu.sg}).
                          This author is supported by Singapore MOE  AcRF Grants R-146-000-271-112
                          and R-146-000-284-114 as well as NSFC Grant 11871364.}
                         }

\maketitle

\no\bf Abstract. \rm A linear-quadratic optimal control problem for a forward stochastic Volterra integral equation (FSVIE, for short) is considered. Under the usual convexity conditions, open-loop optimal control exists, which can be characterized by the optimality system, a coupled system of an FSVIE and a Type-II backward SVIE (BSVIE, for short). To obtain a causal state feedback representation for the open-loop optimal control, a path-dependent Riccati equation for an operator-valued function is introduced, via which the optimality system can be decoupled. In the process of decoupling, a Type-III BSVIE is introduced whose adapted solution can be used to represent the adapted M-solution of the corresponding Type-II BSVIE. Under certain conditions, it is proved that the path-dependent Riccati equation admits a unique solution, which means that the decoupling field for the optimality system is found. Therefore a causal state feedback representation of the open-loop optimal control is constructed. An additional interesting finding is that when the control only appears in the diffusion term, not in the drift term of the state system, the causal state feedback reduces to a Markovian state feedback.

\ms

\no\bf Keywords. \rm Linear-quadratic optimal control,
 stochastic Volterra integral equation, optimality system,
decoupling field, path-dependent Riccati equation, causal state feedback representation.

\ms

\no\bf AMS subject classifications. \rm 93E20, 49N10, 60H20, 45D05.

\section{Introduction}

Let $(\Om,\cF,\dbF,\dbP)$ be a complete filtered probability space, on which a one-dimensional standard Brownian motion $W$ is defined, whose natural filtration augmented by all the $\dbP$-null sets in $\cF$ is denoted by $\dbF\equiv\{\cF_s\}_{s\ges0}$, and let $T>0$ be a fixed time horizon. For any $t\in[0,T]$, let $\dbF^t=\{\cF_s^t\}_{s\ges0}$ with
$$\cF^t_s=\left\{\2n\ba{ll}
\ss\ds\si\(\big\{W(s)-W(t)\bigm|s\ges t\big\}\cup\big\{N\in\cF\bigm|\dbP(N)=0\big\}\),
\qq& s\in[t,T],\\ [2mm]
\ss\ds\si\(\big\{N\in\cF\bigm|\dbP(N)=0\big\}\),\qq&s\in[0,t).\ea\right.$$
Clearly, $\dbF^0=\dbF$. Next, let
$$\sX_t=C([t,T];\dbR^n)=\big\{\bx_t:[t,T]\to\dbR^n~| ~\bx_t(\cd)\hb{ is continous}\big\},\q t\in[0,T].$$
For each $t\in[0,T]$, $\sX_t$ is a Banach space (of some deterministic functions) under the norm:
$$\|\bx(\cd)\|=\sup_{s\in[t,T]}|\bx(s)|.$$
We will use $\bx_t(\cd)$ below to denote an element in $\sX_t$ to emphasize the role played by $t$.
%
%
Next, we introduce
\bel{L}\ds\L=\big\{(t,\bx_t(\cd))~|~t\in[0,T),\,\bx_t(\cd)\in\sX_t\big\}.\ee
For any given {\it free pair} $(t,\bx_t(\cd))\in\L$,
consider the following controlled linear forward stochastic Volterra integral equation (FSVIE, for short) on $[t,T]$:
\begin{align}
 X(s)&=\bx_t(s)+\int_t^s\big[A(s,\t)X(\t)+B(s,\t)u(\t)\big]d\t\nn\\
&\q+ \int_t^s\big[C(s,\t)X(\t)+D(s,\t)u(\t)\big]dW(\t),\q s\in[t,T],\label{state}
\end{align}
where $A,C:\D_*[0,T]\to\dbR^{n\times n}$, $B,D:\D_*[0,T]\to\dbR^{n\times m}$ are deterministic functions satisfying proper conditions, called the {\it coefficients} of the {\it state equation} \rf{state}. Here, $\D_*[0,T]\deq\{(s,r)\bigm|0\les r\les s\les T\}$ is the lower triangle domain. The process $u(\cd)$ is called the {\it control process} which belongs to the space
$$\sU[t,T]=\Big\{u:[t,T]\times\Om\to\dbR^m\bigm|u(\cd)\hb{~is $\dbF^t$-progressively measurable}, ~\dbE\int^T_t|u(\t)|^2d\t<\i\Big\},$$
and the corresponding solution $X(\cd)=X(\cd\,;t,\bx_t(\cd),u(\cd))$ of \rf{state} which uniquely exists under some proper conditions on the coefficients, is called a {\it state process}.
%
%
We denote
$$\ba{ll}
\ss\ds\sX[t,T]=\Big\{X:[t,T]\times\Om\to\dbR^n\bigm|X(\cd)\hb{~is $\dbF^t$-progressively measurable on $[t,T]$, continuous},\\
\ss\ds\qq\qq\qq\qq\qq\qq\qq\qq ~\dbE\[\sup_{s\in[t,T]}|X(s)|^2\]<\i\1n\Big\}.\ea$$
To measure the performance of the control $u(\cd)$, we introduce the following cost functional:
\bel{cost}
J(t,\bx_t(\cd);u(\cd))={1\over 2}\dbE_t\Big\{\int_t^T\big[\lan Q(\t)X(\t),X(\t)\ran+\lan R(\t)u(\t),u(\t)\ran\big]d\t+\lan GX(T),X(T)\ran\Big\},\ee
where $G\in\dbS^n$, the set of all $(n\times n)$ symmetric matrices; $Q:[0,T]\to\dbS^n$, and $R:[0,T]\to\dbS^m$ are deterministic functions. The problem that we are going to study
can be stated as follows.

\ms
\no{\bf Problem (LQ-FSVIE).} For any given free pair $(t,\bx_t(\cd))\in\L$, find a control $\bar u(\cd)\in\sU[t,T]$ such that
\bel{inf-J}
J(t,\bx_t(\cd);\bar u(\cd))\les J(t,\bx_t(\cd);u(\cd)),\qq\forall u(\cd)\in\sU[t,T].
\ee

The above is called a  {\it linear-quadratic} (LQ, for short) {\it optimal control problem for FSVIEs.} Any $\bar u(\cd)\in\sU[t,T]$ satisfying \rf{inf-J} is called an {\it (open-loop) optimal control} of Problem (LQ-FSVIE) for the free pair $(t,\bx_t(\cd))$; the corresponding state process $\bar X(\cd)\equiv X(\cd\,;t,\bx_t(\cd),\bar u(\cd))$ is called an {\it optimal state process}; and the function $V(\cd\,,\cd)$, defined by
\bel{value-function}
V(t,\bx_t(\cd))\deq\inf_{u(\cd)\in\sU[t,T]}J(t,\bx_t(\cd);u(\cd)),
\qq\forall(t,\bx_t(\cd))\in\L,\ee
is called the {\it value function} of Problem (LQ-FSVIE). We point out that the value function, $\bx_t(\cd)\mapsto V(t,\bx_t(\cd))$ is defined on the infinite dimensional Banach space $\sX_t$ (of deterministic continuous functions). Moreover, by regarding Problem (LQ-FSVIE) as an optimization of the quadratic functional on $\sU[t,T]$, with the parameter $(t,\bx_t(\cd))$, we expect that the value function should have the following quadratic form:
\bel{def-v-P-In}
V(t,\bx_t(\cd))={1\over2} P(t)\big(\bx_t(\cd),\bx_t(\cd)\big),\qq\forall(t,\bx_t(\cd))\in\L.\ee
Here, for any $t\in[0,T]$, $P(t)$ is a bilinear functional on $\sX_t\times\sX_t$.

\ms

When the coefficients $A(t,s)$, $B(t,s)$, $C(t,s)$, and $D(t,s)$ are independent of $t$ and the free pair $(t,\bx_t(\cd))
\equiv(t,x)$, which is called an {\it initial pair}, for some $x\in\dbR^n$, Problem (LQ-FSVIE) reduces to a classical LQ optimal control for stochastic differential equations (SDEs, for short), denoted by Problem (LQ-SDE). This has been occupying a main part of the center stage for a long history in  control theory. Since we prefer not to make a lengthy survey on the literature of Problem (LQ-SDE), let us just list some books \cite{Anderson1971, Athens1971, Yong-Zhou1999, Bensoussan2018, Sun-Yong20201, Sun-Yong20202}, where good surveys and tutorials along with extensive references (up to that time) can be found. It is well-known by now that (see \cite[Chapter 6]{Yong-Zhou1999} and \cite{Sun-Yong20201}, for example), Problem (LQ-SDE) can be solved by the following three steps in general: (i) By a variational method, the optimality system is derived, which is a coupled forward-backward SDE (FBSDE, for short); (ii) the optimality system is decoupled by introducing the associated Riccati equation, which is solvable under certain conditions; and (iii) the optimal control is represented as a state feedback. This gives a very satisfactory solution to Problem (LQ-SDE).

\ms

In recent years, FSVIEs have received more and more attention due to its applications in {\it rough volatility} models of mathematical finance, see, for example, Comte--Renault \cite{CR}, Gatheral--Jaisson--Rosenbaum \cite{GJR-2018}, El Euch--Rosenbaum \cite{Euch, ER-2018}, and Viens--Zhang \cite{Viens-Zhang2019}. The optimal control problem for general FSVIEs has been widely studied in the control theory even before the above-mentioned literatures of rough volatility appeared. Under the assumption that the control domain is convex, the maximum principle (MP, for short) for FSVIEs was first established by Yong \cite{Yong2008}, in which the so-called {\it Type-II backward stochastic Volterra integral equations} (BSVIEs, for short) were introduced, as the associated adjoint equations. See \cite{Agram-Oksendal2015,
Shi-Wang-Yong2015,Wang-Zhang2017,Wang2020} for some further results on the MP for FSVIEs. More recently, the dynamical programming principle (DPP, for short) for FSVIEs was first proved by Viens--Zhang \cite{Viens-Zhang2019}, by lifting the state space into the space of continuous functions. We point out that in \cite{Viens-Zhang2019} a functional It\^o's formula for FSVIEs was established, which serves as a fundamental tool in the current paper.

\ms

On the other hand, for linear-quadratic problems of FSVIEs, namely, for Problem (LQ-FSVIE), some early stage of investigation can be found in Chen--Yong \cite{Chen2007}, Yong \cite[Section 5]{Yong2008}, and Wang \cite{Wang2018}, where the authors studied the corresponding MP under different assumptions, but the associated Riccati equation was not concerned at all. In Abi Jaber--Miller--Pham \cite{Abi2021,Abi2021-2}, the associated Riccati equation was derived, which, of course, brought some new insights into the LQ theory of FSVIEs. However, their problem was formulated only in the case of convolution form; that is $\a(s,r)=\a(s-r)$ for $\a(\cd\,,\cd)=A(\cd\,,\cd),B(\cd\,,\cd),
C(\cd\,,\cd),D(\cd\,,\cd)$, and one cannot directly extend the results in \cite{Abi2021,Abi2021-2} to the general LQ problems, because of the limitation of their lift methods. We emphasize that in all the works mentioned above, the general question of how to decouple the optimality system associated with Problem (LQ-FSVIE) has not been touched. In other words, the crucial step (ii) in the standard path of solving Problem (LQ-SDE) mentioned above is completely missing for general case of FSVIEs. As a result, step (iii) for general FSVIEs does not have its foundation. In our opinion,  this step (ii) is more important than solving Problem (LQ-FSVIE) itself, because on one hand, it links the Hamiltonian system and the (path-dependent) HJB equation of controlled FSVIEs; and on the other hand, it provides some important prototype for decoupling general coupled FBSVIEs. We refer the reader to Ma--Protter--Yong \cite{Ma-Protter-Yong1994}, Ma--Yong \cite{Ma-Yong1999}, Yong \cite{Yong2006}, Ma--Wu--Zhang--Zhang \cite{MWZZ-2015}, and Zhang \cite{Zhang-2017}  for the related results in the SDE setting. From this point of view, we may also say that the main objective of the current paper is to explore the decouple of linear FBSVIEs, taking Problem (LQ-FSVIE) as a carrier, which needs a completely new creative method.

\ms

By a variational method and duality principle, we can obtain the following {\it optimality system} associated with Problem (LQ-FSVIE) (see \autoref{theorem:optimality-condition}): Let $(\bar X(\cd),\bar u(\cd))$ be an open-loop optimal pair. Then
\bel{MP}R(s)\bar u(s)+Y^0(s)=0,\q s\in[t,T],\ee
with
\bel{OP0}\left\{
\begin{aligned}
 X(s)&=\f(t,s)+\int_t^s\big[A(s,r)X(r)+B(s,r)\bar u(r)\big]dr\\
&\q+ \int_t^s\big[C(s,r)X(r)+D(s,r)\bar u(r)\big]dW(r),\\
 Y(s)&=Q(s)X(s)+A(T,s)^\top GX(T)+C(T,s)^\top\zeta(s)\\
&\q+\int_s^T\big[A(r,s)^\top Y(r) +C(r,s)^\top Z(r,s)\big]dr-\int_s^T Z(s,r)dW(r),\\
Y^0(s)&=B(T,s)^\top GX(T)+D(T,s)^\top\zeta(s)\\
&\q+\int_s^T\big[B(r,s)^\top Y(r) +D(r,s)^\top Z(r,s)\big]dr-\int_s^T  Z^0(s,r)dW(r),\\
\eta(s)&=GX(T)-\int_s^T\zeta(r)dW(r).
\end{aligned}\right.\ee
Basically, the above is the MP for Problem (LQ-FSVIE). Note that in the above \rf{OP0}, the second equation is a BSDE with unknown $(\eta(\cd),\z(\cd))$; the third is called a Type-II BSVIE with unknown $(Y(\cd),Z(\cd\,,\cd))$, whose main feature is that both $Z(s,r)$ and $Z(r,s)$ appear; the last is a Type-I BSVIE with unknown $(Y^0(\cd),Z^0(\cd\,,\cd))$, whose free term and drift are known processes from the first three equations. It is seen that the system \rf{OP0} is a fully coupled FBSVIE, with the coupling being through \rf{MP}.

\ms

It is not hard to see that the representation of the optimal control obtained from \rf{MP} (assuming $R(s)$ to be invertible for all $s\in[0,T]$) is not practically feasible. The reason is that in determining $Y^0(s)$, future information $\bar X(T)$ of the optimal state process $\bar X(\cd)$ is involved. In the classical LQ theory (either for ODEs or SDEs), under proper conditions, the optimality system (which is a two-point boundary value problem for ODEs, or an FBSDE for SDEs) can be decoupled by the solution to a proper Riccati equation. The state feedback representation of the open-loop optimal control can be obtained as a by-product. See \cite{Yong-Zhou1999} for the standard LQ problems of ODEs and SDEs. The main tool used in the decoupling procedure for SDEs is the (classical) It\^o's formula (and the chain rule for ODEs). However, for FBSVIEs, the classical It\^o's formula is not applicable. By looking at the problem deeper, one realizes that
the decoupling technique is essentially relying on the flow property of the equations, or some kind of semigroup property of the dynamic system. Unfortunately,  the controlled FSVIE  in the optimality system does not satisfy the flow property in the standard sense. In fact, for $t\les r\les s\les T$,
\bel{flowX1}
X(s) \neq X(r) + \int_r^s \big[A(s,\t)X(\t)+B(s,\t)\bar u(\t)\big] d\t + \int_r^s\big[C(s,\t)X(\t)+D(s,\t)\bar u(\t)\big] dW(\t),\ee
due to which, the decoupling method for SDEs (see Ma--Protter-Yong \cite{Ma-Protter-Yong1994}, called the four-step-scheme) cannot be applied here, and the corresponding Riccati type equation is completely unclear from this path. Indeed, this problem has been open for more than ten years (see \cite{Chen2007,Yong2008} for some early suggestions on this topic).

\ms

Recently, Viens--Zhang \cite{Viens-Zhang2019} and Wang--Yong--Zhang \cite{Wang-Yong-Zhang2021} have developed a theory established some relations between BSVIEs and path-dependent PDEs, which are natural and significant extensions of the four-step-scheme (for FBSDEs decoupling \cite{Ma-Protter-Yong1994, Ma-Yong1999}) to the FBSVIEs. These results provide some hopes for our decoupling the optimality system of Problem (LQ-FSVIE), so that getting a practically feasible representation for optimal control becomes possible.

\ms

We now briefly describe the main clue of this paper. Let $(\bar X(\cd),\bar u(\cd))$ be an open-loop optimal pair. First of all, motivated by \cite{Viens-Zhang2019} (see also \cite{Wang-Yong-Zhang2021}), we introduce the following auxiliary process $\bar\cX(\cd\,,\cd)$ with two time variables:
\bel{auxi-pro}\ba{ll}
\ss\ds\bar\cX(s,r)=\bx_t(s)+\2n\int_t^r\3n\big[A(s,\t) \bar X(\t)+B(s,\t)\bar u(\t)\big]d\t+\2n\int_t^r\3n \big[C(s,\t)\bar X(\t)+D(s,\t)\bar u(\t) \big] dW(\t),\\
\ss\ds\qq\qq\qq\qq\qq\qq\qq\qq\qq\qq\qq\qq\qq\qq t\les r\les s\les T.\ea\ee
Then the flow property holds for the state process $\bar X(\cd)$ in the following sense:
\bel{flow-cX}\ba{ll}
\ds\bar X(s)=\bar\cX(s,r)+\int_r^s \big[A(s,\t)\bar X(\t)+B(s,\t)\bar u(\t)\big]d\t+\int_r^s\big[C(s,\t)\bar X(\t)+D(s,\t)\bar u(\t)\big]dW(\t)\\
\ss\ds\qq\qq\qq\qq\qq\qq\qq\qq\qq\qq\qq\qq\qq\qq t\les r\les s\les T.\ea\ee
It is worthy of pointing out that if $r$ is the current time, then for $s\in[r,T]$, $s\mapsto\bar\cX(s,r)$ only depends on the history $\{(\bar X(\t),\bar u(\t))\bigm|\t\in[t,r]\}$ of the optimal pair $(\bar X(\cd),\bar u(\cd))$ and no future information of the state and control is involved. Therefore, we say that $s\mapsto\bar\cX(s,r)$ is {\it causal}. Next, we are trying to express the optimal control in the following manner:
\bel{bar u}\bar u(r)=\Th(r)\bar\cX(\cd\,,r),\qq r\in[t,T],\ee
where $\Th(r):\sX_t\to\dbR^m$ is a linear bounded operator which can be determined by the coefficients of the state equation and the weighting matrix functions in the cost functional. As $s\mapsto\bar\cX(s,r)$ is causal, the above representation implies that the value $\bar u(r)$ of the optimal control $\bar u(\cd)$ at current time $r$ does not involve future information of the corresponding state process $\bar X(\cd)$. Thus, we call the above a {\it casual state feedback representation} of optimal control $\bar u(\cd)$ (see \autoref{thm:Decoupling}). Such a path is basically the analog of Steps (ii) and (iii) for solving classical LQ problems of ODEs and SDEs. The idea is pretty natural. But to achieve the goal, namely to determine the operator $\Th(\cd)$, is by no means trivial.

\ms

We now highlight the main contributions of this paper.

\ms

$\bullet$ Derived the (path-dependent) Riccati equation \rf{Riccati} for the bilinear operator valued function $P(\cd)$, through which, the operator $\Th(\cd)$ can be determined. Note that if we mimic the four-step scheme for FBSDEs (see \cite{Ma-Protter-Yong1994, Ma-Yong1999, Yong-Zhou1999}) trying to decouple the optimality system, we will encounter some difficulties that seem impossible to overcome. To get around this, we first make use of the above flow property, and the functional It\^o's formula established in \cite{Viens-Zhang2019} to derive the HJB equation for the value function $V(t,\bx_t(\cd))$ (see \rf{HJB-PPDE2}) and from that, we correctly identify the Riccati equation for the bilinear operator valued function $P(\cd)$, whose coefficients are path-dependent.

\ms

$\bullet$ Introduced a Type-III BSVIE whose adapted solution can be used to represent the adapted M-solution of the Type-II BSVIEs in the adjoint equation. (see \autoref{Prop:Type-I-II}), which will play a crucial role in decoupling the optimality system via the solution of the Riccati equation \rf{Riccati}. By a Type-III BSVIE, we mean a BSVIE that contains the diagonal value $Z(s,s)$ of $Z(\cd\,,\cd)$ in the drift. Such an equation was introduced by Wang--Yong \cite{Wang-Yong2021} the first time, and has been widely used in \cite{Hernandez2020,HP2,Hamaguchi2020,Lei-Pun2021,Hernandez2021} while studying time-inconsistent optimal control problems. We coin the name of Type-III BSVIEs (the first time) here to distinguish this kind of equations from the other two types of BSVIEs. With such a relation, the optimality condition for Problem (LQ-FSVIE) can be characterized by a Type-III BSVIEs (see \autoref{theorem:optimality-condition-new}). From this, one sees that the decoupling method for the optimality system of Problem (LQ-FSVIE) is significantly different from that for Problem (LQ-SDE).

\ms

$\bullet$ Proved the existence and uniqueness of the {\it strong regular solution} $P(\cd)$ to the path-dependent Riccati equation \rf{Riccati} under the following standard condition:
\bel{standard-condition-In}
Q(s)\ges 0,\q R(s)\ges\l I_m,\q s\in[0,T];\q G\ges 0,\ee
where $\l>0$ is a given constant (see \autoref{def-SRS} and \autoref{theorem:well-posedness-RE}). It follows that the path-dependent HJB equation admits a unique classical solution
and the {\it decoupling field} of the optimality system really exists. Note that for any $t\in[0,T]$, $P(t)$ is a bilinear functional on $\sX_t\times\sX_t$, which is a Banach space depending on $t$ (rather than a Hilbert space).
This feature makes it different from the operator-valued Riccati equation derived from the LQ control problems for
(stochastic) evolution equation (see \cite{Li-Yong1995,Lv2019}). Moreover, we see that the form of \rf{Riccati} is very similar to the classical stochastic Riccati equation, except that the range of $P(\cd)$ is not a Euclidean space. Needless to say, the form of \rf{Riccati} is more natural than the ones derived in \cite{Abi2021,Abi2021-2}.

\ms

$\bullet$ An additional interesting finding is that when the drift term is not controlled, the causal state feedback representation of optimal control will reduce to a state feedback, which means that the value $\bar u(s)$ of the optimal control $\bar u(\cd)$ at the current time $s$ only depends on the state $\bar X(s)$ at the current time. Moreover, using the solution of the path-dependent Riccati equation, we can obtain a  representation for $(\dbE_s[Y(\cd)]|_{[s,T]},Z(\cd,s)|_{[s,T]})$
in the dual space of $\sX_s$ (see \autoref{corollary-YZ}),
by regarding it as a bounded linear functional on $\sX_s$.

\ms

The rest of the paper is organized as follows. Section \ref{sect-preliminary} collects some preliminary results which include the introduction of a Type-III BSVIE and a representation of the adapted M-solution to a Type-II BSVIE. In Section \ref{sect-opti-system}, the optimality system associated with Problem (LQ-FSVIE) is derived.
We introduce the path-dependent Riccati equation in Section \ref{sect-Riccati} and establish the decoupling for the optimality system in Section \ref{sect-decoupling}.
Finally, in Section \ref{sect-wellposedness} the well-posedness of the Riccati equation is established.


\section{Preliminaries}\label{sect-preliminary}

\subsection{Basic results for FSVIEs and BSVIEs}

Throughout this paper,  let $\dbR^{n\times m}$ be the Euclidean space consisting of $n\times m$ real matrices,
and $\dbS^n$ be the subset of  $\dbR^{n\times n}$ consisting of symmetric matrices.
Let $T>0$ be the time horizon. Denote
$$\D_*[t,T]=\big\{(s,r)\bigm|t\les r\les s\les T\big\},\qq\D^*[t,T]=\big\{(s,r)\bigm|t\les s\les r\les T\big\}.$$
They are the lower and the upper triangle domains in $[t,T]^2$, respectively. For any Euclidean space $\dbH$, we introduce the following spaces: For any $t\in[0,T)$, (with $\cB([t,T])$ being the Borel $\si$-field of $[t,T]$)
$$\ba{ll}
\ss\ds L^\i(t,T;\dbH)=\big\{\f:[t,T]\to\dbH~|~\f~\hb{is essentially bounded}\big\},\\
\ss\ds L_{\cF_T^t}^2(t,T;\dbH)=\Big\{\f:[t,T]\times
\Om\to\dbH\bigm|\f(\cd)~\hb{is $\cB([t,T])\otimes
\cF^t_T$-measruable},~\dbE\int_t^T|\f(\t)|^2d\t<\i\Big\},\\
\ss\ds L_{\dbF^t}^2(t,T;\dbH)
=\Big\{\f(\cd)\in L^2_{\cF_T^t}(t,T;\dbH)\bigm|\f(\cd)~\hb{is $\dbF^t$-progressively measurable on $[t,T]$}\Big\},\\
%
%
%
\ds L_{\dbF^t}^2(\Om;C([t,T];\dbH))
=\Big\{\f(\cd)\in L_{\dbF^t}^2(t,T;\dbH)\bigm|\f(\cd)~\hb{has continuous paths,}~\dbE\big[
\sup_{s\in[t,T]}|\f(s)|^2\big]<\i \Big\},\\
\ss\ds L_{\dbF^t}^2([t,T]^2;\dbH)=\Big\{\f:[t,T]^2
\times\Om\to\dbH\bigm|\f(s,\cd)\in L_{\dbF^t}^2(t,T;\dbH),\,\ae~s\in[t,T],
~\dbE\int_t^T\3n\int_t^T\3n|\f(s,\t)|^2d\t ds<\i \Big\},\\
\ss\ds L_{\dbF^t}^2(\D^*[t,T];\dbH)
=\Big\{\f:\D^*[t,T]\1n\times\1n\Om\1n\to\1n\dbH\bigm|\f(s,\cd)
\1n\in\1n L_{\dbF^s}^2(s,T;\dbH),\,\ae~s\1n\in\1n[t,T],~ \esssup_{s\in[t,T]}\dbE\2n\int_s^T\3n|\f(s,\t)|^2d\t\1n<\1n\i \Big\}.\ea$$
For the state equation \rf{state} and the cost functional \rf{cost}, we impose the following assumptions.

\begin{taggedassumption}{(H1)}\label{ass:H1}\rm
The coefficients of the state equation $A,C:\D_*[0,T]\to\dbR^{n\times n}$ and $B,D:\D_*[0,T]\to \dbR^{n\times m}$ are (deterministic) bounded and differentiable, with bounded derivatives.
\end{taggedassumption}

\begin{taggedassumption}{(H2)}\label{ass:H2}\rm
The weighting coefficients in the cost functional \rf{cost} satisfy
$$Q\in L^\i(0,T;\dbS^n),\q R\in L^\i(0,T;\dbS^m),\q G\in \dbS^n.$$
\end{taggedassumption}

Applying the results of Ruan \cite{Ruan2020}, we have the following result.

\begin{lemma}\label{lemma:Well-SVIE}
Let {\rm \ref{ass:H1}} hold. Then for any $(t,\bx_t(\cd))\in\L$ and $u(\cd)\in\sU[t,T]$, state equation \rf{state} admits a unique solution $X(\cd)\equiv X(\cd\,;t,\bx_t(\cd),u(\cd))\in L^2_{\dbF^t}(\Om;C([t,T];\dbR^n))$. Moreover, there exists a constant $K>0$, independent of $(t,\bx_t(\cd))\in\L$ and $u(\cd)\in\sU[t,T]$ such that
\begin{align}
\dbE\[\sup_{s\in[t,T]}|X(s)|^2\]\les K\[\sup_{s\in[t,T]}|\bx_t(s)|^2+\dbE\int_t^T|u(s)|^2ds\].
\end{align}
\end{lemma}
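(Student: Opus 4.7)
The plan is to establish both existence/uniqueness and the a priori estimate via a standard contraction mapping argument, then upgrade the $L^2$-solution to a solution with continuous paths using the differentiability in \ref{ass:H1}.

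First I would define the solution operator $\Phi$ on $L^2_{\dbF^t}(t,T;\dbR^n)$ by sending $X(\cd)$ to the right-hand side of \rf{state}. Using Cauchy--Schwarz on the drift integral, It\^o's isometry on the diffusion integral, and the uniform boundedness of the coefficients given by \ref{ass:H1}, one gets a pointwise inequality of the form
$$\dbE|\Phi(X)(s)-\Phi(\ti X)(s)|^2 \les K\int_t^s\dbE|X(\t)-\ti X(\t)|^2 d\t.$$
Introducing the weighted norm $\|X\|_\b^2=\dbE\int_t^T e^{-\b s}|X(s)|^2ds$ and taking $\b$ large makes $\Phi$ a strict contraction, yielding a unique fixed point $X(\cd)\in L^2_{\dbF^t}(t,T;\dbR^n)$. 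The same chain of inequalities, applied directly to $X$ itself and combined with Gronwall's inequality, produces the $L^2$-estimate
$$\dbE|X(s)|^2 \les K\[\sup_{\t\in[t,T]}|\bx_t(\t)|^2+\dbE\int_t^T|u(\t)|^2d\t\],\qq s\in[t,T].$$

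Next I would promote this to continuity of paths and to the supremum estimate. For $t\les s'\les s\les T$ I would decompose
$$X(s)-X(s')=\bx_t(s)-\bx_t(s')+\int_t^{s'}\3n[A(s,\t)-A(s',\t)]X(\t)d\t+\int_{s'}^s\3n A(s,\t)X(\t)d\t+(\hbox{analogous $B,C,D$ terms}),$$
where the first integral is controlled by $K|s-s'|$ thanks to the bounded derivative of $A$ (similarly for $B,C,D$), and the ``remainder'' integrals in $[s',s]$ are controlled by $K|s-s'|^{1/2}$ after applying Cauchy--Schwarz and the It\^o isometry to the already-obtained $L^2$-bound on $X$. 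This gives a Kolmogorov-type moment estimate $\dbE|X(s)-X(s')|^2\les K|s-s'|$, and hence $X(\cd)$ admits a continuous modification. The supremum bound then follows by applying the Burkholder--Davis--Gundy inequality to the stochastic integral, Jensen/Cauchy--Schwarz to the Lebesgue integral, and finally Gronwall's inequality, which upgrades the pointwise $L^2$-estimate to the desired $\dbE[\sup_{s\in[t,T]}|X(s)|^2]$-estimate.

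The main technical obstacle is precisely the path continuity: in an SVIE the kernel $A(s,\t)$ varies with the upper limit $s$, so the classical SDE trick of differentiating or using the semimartingale structure of $s\mapsto X(s)$ is unavailable. This is where \ref{ass:H1} (bounded derivatives of $A,B,C,D$ in $s$) plays an essential role, converting variation of the kernel in $s$ into a Lipschitz term. Once continuity is in hand, the rest is a routine adaptation of well-known SVIE estimates, and one may invoke the framework developed in Ruan \cite{Ruan2020} to conclude.
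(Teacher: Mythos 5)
The paper itself offers no proof of this lemma; it simply invokes the results of Ruan \cite{Ruan2020}, so your proposal is being judged against the standard argument rather than a proof in the text. Your contraction-mapping construction and the Gronwall-based pointwise $L^2$ estimate are fine, but the step upgrading to continuous paths and to the supremum estimate has a genuine gap. First, a bound of the form $\dbE|X(s)-X(s')|^2\les K|s-s'|$ does not give a continuous modification: Kolmogorov's criterion requires $\dbE|X(s)-X(s')|^p\les K|s-s'|^{1+\e}$ with exponent strictly greater than one, and since $u(\cd)$ and $X(\cd)$ are only square integrable you cannot raise moments. Worse, the control terms admit no H\"older rate at all: $\dbE\big|\int_{s'}^sD(s,\t)u(\t)dW(\t)\big|^2\les K\,\dbE\int_{s'}^s|u(\t)|^2d\t$, and for $u(\cd)\in\sU[t,T]$ the right-hand side tends to zero with no polynomial rate in $|s-s'|$. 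Second, the Burkholder--Davis--Gundy inequality cannot be applied directly to $s\mapsto\int_t^sC(s,\t)X(\t)dW(\t)$ or $s\mapsto\int_t^sD(s,\t)u(\t)dW(\t)$, because the integrand depends on the upper limit $s$; these are not martingales in $s$, so their suprema are not controlled by the quadratic variation at the terminal time.

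The standard repair --- and the place where the differentiability in \ref{ass:H1} is genuinely used --- is to write $\a(s,\t)=\a(\t,\t)+\int_\t^s\pa_s\a(r,\t)\,dr$ for $\a=A,B,C,D$ (derivative in the first variable) and apply the stochastic Fubini theorem, e.g.
\begin{align*}
\int_t^sC(s,\t)X(\t)dW(\t)=\int_t^sC(\t,\t)X(\t)dW(\t)
+\int_t^s\Big(\int_t^r\pa_sC(r,\t)X(\t)dW(\t)\Big)dr,
\end{align*}
and similarly for the other three terms. This exhibits $X(\cd)$ as an It\^o-type process plus Lebesgue integrals of stochastic integrals with $s$-independent integrands, from which path continuity is immediate and the bound on $\dbE\big[\sup_{s\in[t,T]}|X(s)|^2\big]$ follows by BDG applied to the fixed-integrand stochastic integrals, Cauchy--Schwarz, and Gronwall. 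With this substitution your argument closes; as written, the Kolmogorov/BDG step does not.
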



\ms
From \autoref{lemma:Well-SVIE}, we see that Problem (LQ-FSVIE) is well-formulated under the assumptions \ref{ass:H1} and \ref{ass:H2}. We remark that \autoref{lemma:Well-SVIE} still holds true under some weaker assumptions than \ref{ass:H1}.
Since we mainly focus on the form of the associated Riccati equation (or the decoupling field of the optimality system) in this paper, we prefer not to pursue the weakest possible conditions to simplify our presentation.

\ms

We now recall some fundamental results of the following Type-II linear BSVIEs:
\bel{linear-system-II-Introduction}
Y(s)=\psi(s)+\int_s^T\big[\cA(s,\t)Y(\t)+\cC(s,\t)Z(\t,s)\big]d\t-\int_s^T Z(s,\t)dW(\t),\q s\in[t,T],\ee
which can be found in Yong \cite{Yong2008}.

\begin{definition}\rm
A pair of processes $(Y(\cd),Z(\cd\,,\cd))\in L^2_{\dbF^t}(t,T;\dbR^n)\times L^2_{\dbF^t}([t,T]^2;\dbR^n)$
is called an {\it adapted M-solution} to BSVIE \rf{linear-system-II-Introduction}
if \rf{linear-system-II-Introduction} is satisfied in the usual It\^o sense for the Lebesgue measure almost every $t\les\t\les s\les T$ and, in addition, the following holds:
\bel{M1-solution}
Y(s)=\dbE_r[Y(s)]+\int_r^sZ(s,\t)dW(\t),\qq\ae~t\les r\les s\les T.\ee
\end{definition}

\begin{lemma}\label{lemma:well-ii-bsvie}
Let $\cA(\cd\,,\cd),\cC(\cd\,,\cd)\in L^\i (\D^*[t,T];\dbR^{n\times n})$. Then for any $\psi(\cd)\in L^2_{\cF^t_T}(t,T;\dbR^n)$, BSVIE \rf{linear-system-II-Introduction} admits a unique adapted M-solution $(Y(\cd),Z(\cd\,,\cd))\in L^2_{\dbF^t}(t,T;\dbR^n)\times L^2_{\dbF^t}([t,T]^2;\dbR^n)$.
\end{lemma}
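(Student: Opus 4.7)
The plan is to apply the Banach fixed-point theorem on the Hilbert space $\sH[t,T]:=L^2_{\dbF^t}(t,T;\dbR^n)\times L^2_{\dbF^t}([t,T]^2;\dbR^n)$, exploiting the fully linear structure so that no linearization or cut-off argument is needed. I would define a solution map $\G:\sH[t,T]\to\sH[t,T]$ as follows. Given $(y(\cd),z(\cd\,,\cd))\in\sH[t,T]$, for each fixed $s\in[t,T]$ form the $\cF^t_T$-measurable, square-integrable random variable
$$\xi(s)\deq\psi(s)+\int_s^T\big[\cA(s,\t)y(\t)+\cC(s,\t)z(\t,s)\big]d\t,$$
which is well defined because the boundedness of $\cA,\cC$ together with Cauchy--Schwarz makes the inner Bochner integral square-integrable. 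Set $Y(s)\deq\dbE_s[\xi(s)]$ and apply the martingale representation theorem on $[s,T]$ to extract a process $\{Z(s,\t)\}_{\t\in[s,T]}$ so that $\xi(s)=Y(s)+\int_s^T Z(s,\t)dW(\t)$; then apply martingale representation a second time to the $\cF^t_s$-measurable random variable $Y(s)$ on the interval $[t,s]$ to define $Z(s,\t)$ for $\t\in[t,s)$. This last step is exactly what enforces the M-solution identity \rf{M1-solution}. Set $\G(y,z)\deq(Y,Z)$.

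The first genuinely non-trivial point, and what I expect to be the main obstacle, is verifying that the so-constructed $Z(\cd\,,\cd)$ is jointly $(s,\t,\omega)$-measurable and belongs to $L^2_{\dbF^t}([t,T]^2;\dbR^n)$, since the martingale representation theorem only gives a process pointwise in the parameter $s$ (and up to indistinguishability). The standard way around this is to first prove the construction for $\xi(s)$ piecewise constant in $s$, where joint measurability is immediate, and then pass to the $L^2$-limit using the a priori bounds $\dbE|Y(s)|^2+\dbE\int_t^T|Z(s,\t)|^2 d\t\les 3\dbE|\xi(s)|^2$ (which combine It\^o isometry on $[s,T]$ and the M-solution relation on $[t,s]$). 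Alternatively, one can invoke the parametric representation theorem used in Yong's original treatment of Type-II BSVIEs.

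Once $\G$ is well-defined, contraction follows from routine estimates. Writing $\D y=y_1-y_2$, etc., the same identities give $\dbE|\D Y(s)|^2+\dbE\int_t^T|\D Z(s,\t)|^2 d\t\les 3\dbE|\D\xi(s)|^2$, while Cauchy--Schwarz together with $\|\cA\|_\i,\|\cC\|_\i<\i$ yields
$$\dbE|\D\xi(s)|^2\les C(T-s)\,\dbE\int_s^T\big[|\D y(\t)|^2+|\D z(\t,s)|^2\big]d\t.$$
Integrating in $s$ over $[t,T]$ and using Fubini on the off-diagonal term $\int_t^T\int_s^T|\D z(\t,s)|^2 d\t ds=\int_t^T\int_t^\t|\D z(\t,s)|^2 ds\,d\t$ produces
$$\|\G(y_1,z_1)-\G(y_2,z_2)\|^2_{\sH[t,T]}\les C'(T-t)^2\,\|(y_1,z_1)-(y_2,z_2)\|^2_{\sH[t,T]},$$
which is a strict contraction once $T-t$ is small enough. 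For an arbitrary horizon I would then either introduce the equivalent weighted norm $\|(y,z)\|_\b^2=\dbE\int_t^T e^{-\b s}\bigl[|y(s)|^2+\int_t^T|z(s,\t)|^2 d\t\bigr]ds$ and choose $\b$ large enough to absorb the Lipschitz constant in one step, or split $[t,T]$ into finitely many subintervals of length $\les\d_0$ and patch, using the already-determined tail to re-express the free term on the next piece. Either route produces the unique adapted M-solution, completing the proof.
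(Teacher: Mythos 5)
The paper does not actually prove this lemma: it is quoted verbatim from Yong's 2008 paper on BSVIEs, and your construction is essentially the proof given there --- define the map through $\xi(s)$, take $Y(s)=\dbE_s[\xi(s)]$, use martingale representation twice (on $[s,T]$ for $Z(s,\cdot)$ there, and on $[t,s]$ for $Y(s)$ to enforce \rf{M1-solution}), handle joint measurability in the parameter $s$ by approximation, and close with a contraction. Your small-interval contraction and the ``subdivide and patch'' extension are exactly the structure of Yong's argument; in the patching step one should just make explicit that, after solving on a terminal piece $[S,T]$, the values $Z(\tau,s)$ with $\tau\ges S>s$ needed in the drift on the next piece are obtained by a further martingale representation of $\dbE_S[Y(\tau)]$ over $[t,S]$ (this is forced by the M-condition), after which the modified free term $\psi(s)+\int_S^T[\cA(s,\tau)Y(\tau)+\cC(s,\tau)Z(\tau,s)]d\tau$ is genuinely known and the same fixed-point argument applies on $[S-\d_0,S]$.

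The one step I would not let stand is the claim that the weighted norm $\|(y,z)\|_\b$ alone disposes of an arbitrary horizon ``in one step.'' The exponential weight rides the first time variable, and it gains a factor $1/\b$ only on terms of Volterra type in that variable, such as $\int_s^T\cA(s,\tau)y(\tau)d\tau$. The term $\int_s^T\cC(s,\tau)z(\tau,s)d\tau$ is not of this type: after Fubini the relevant quantity is $\dbE\int_t^T\int_t^\tau e^{\b s}|z(\tau,s)|^2\,ds\,d\tau$, and bounding $e^{\b s}$ by $e^{\b\tau}$ gives back the full weighted $z$-norm with \emph{no} gain in $\b$; the only small factor available is $T-s$ from Cauchy--Schwarz. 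So the one-step Lipschitz constant in the $z$-direction is of order $C\|\cC\|_\i^2(T-t)$ uniformly in $\b$, and the map need not be a contraction when $T-t$ is large (also, as written your weight $e^{-\b s}$ points the wrong way even for the $y$-term). This is precisely why the literature on M-solutions of Type-II BSVIEs proceeds by the small-interval contraction plus backward induction rather than a single global weighted-norm estimate; your second route is the correct one, and with it the proof is complete. (The stray constant $(T-t)^2$ in your contraction bound should be $T-t$ for the $z$-contribution, which is immaterial.)
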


\ms

Let $\cL(\sX_t;\dbR^m)$ be the set of all bounded $\dbR^m$-valued linear functionals on $\sX_t$, with the norm $\|\cd\|_{\cL}$ defined by
$$\|L\|_{\cL}\deq\sup_{\|\bx_t(\cd)\|\les 1}|L\bx_t(\cd)|,\qq\forall L\in\cL(\sX_t;\dbR^m).$$
Let $ L^\i (t,T;\cL(\sX_t;\dbR^m))$ be the set of all $\cL(\sX_t;\dbR^m)$-valued functions defined on $[t,T]$.
In other words, for any $F(\cd)\in L^\i(t,T;\cL(\sX_t;
\dbR^m))$,
\bel{}F(t)\in\cL(\sX_t;\dbR^m),\q\ae~t\in[0,T],\q \hbox{and}\q\esssup_{t\in[0,T]}\|F(t)\|_\cL<\i.
\ee

\begin{definition}\label{def-causal-feedback}\rm Any $\bar\Th(\cd)\in L^\i(t,T;\cL(\sX_t;\dbR^m))$ is called an {\it optimal causal feedback operator} of Problem (LQ-FSVIE) on $[t,T]$ if
\bel{def-causal-feedback1}
J(t,\bx_t(\cd);\bar\Th(\cd)\bar\cX(\cd\,,\cd))\les J(t,\bx_t
(\cd);u(\cd)),\qq\forall u(\cd)\in\sU[t,T],\ee
where $(s,r)\mapsto\bar\cX(s,r)\equiv\bar\cX_r(s)$ is the unique solution to the closed-loop  auxiliary system
\bel{def-causal-feedback2}\ba{ll}
\ss\ds\bar\cX(s,r)=\bx_t(s)+\int_t^r\big[A(s,\t)\bar X(\t)+B(s,\t)\bar\Th(\t)\bar\cX(\cd\,,\t)\big]d\t\\
\ss\ds\qq\qq\qq+\int_t^r\big[C(s,\t)\bar X(\t)+D(s,\t)\bar\Th(\t)\bar\cX(\cd\,,\t)\big]dW(\t),\qq (s,r)\in\D_*[t,T],\ea\ee
and $\bar X(\cd)$ is the unique solution to the closed-loop system
\bel{def-causal-feedback3}\ba{ll}
\ss\ds\bar X(s)=\bx_t(s)+\int_t^s\big[A(s,\t)\bar X(\t)+B(s,\t)\bar\Th(\t)\bar\cX(\cd\,,\t)\big]d\t\\
\ss\ds\qq\qq\qq\q+\int_t^s\big[C(s,\t)\bar X(\t)+D(s,\t)\bar\Th(\t)\bar\cX(\cd\,,\t)\big]dW(\t),\q s\in[t,T].\ea\ee

\end{definition}

\begin{remark}
It is clear that the outcome $\t\mapsto\bar u(\t)=\bar\Th(\t)\bar\cX(\cd\,,\t)$ of the optimal causal state feedback operator $\bar\Th(\cd)$ is an open-loop optimal control. Note that the auxiliary process $\bar\cX_r(\cd)=\bar\cX(\cd\,,r)$ is uniquely determined by the portion $\bar X(\cd)|_{[t,r]}$ of the state process $\bar X(\cd)$. Thus, $\bar u(\cd)$ has a causal state feedback representation. Namely, the value $\bar u(r)$ of the optimal control $\bar u(\cd)$ at any time $r$ does not involve future information of the corresponding state process $\bar X(\cd)$.
\end{remark}

\subsection{Bilinear operators}

For any Banach space $\dbX$, let $\cL^2(\dbX)$ be the set of all  bounded bilinear functionals on  $\dbX\times\dbX$,
with the norm $\|\cd\|_{\cL}$ defined by
$$\|P\|_{\cL^2}\deq\sup_{\|\bx\|,\|\by\|\les 1}|P(\bx,\by)|,\qq\forall P\in\cL^2(\dbX).$$
Suggestively, we usually denote
$$P(\bx,\by)=\lan P\bx,\by\ran=\by^\top P\bx,\qq\bx,\by\in\dbX,$$
where, $P\bx\in\dbX^*$, the dual of $\dbX$, and $\lan\cd\,,\cd\ran$ is the duality pairing between $\dbX^*$ and $\dbX$.
A bilinear functional $P\in\cL^2(\dbX)$ is said to be {\it symmetric} if it satisfies
$$\by^\top P\bx=P(\bx,\by)= P(\by,\bx)=\bx^\top P\by,\qq \forall \bx,\by\in\dbX.$$
Let $\cS(\dbX)$ be the set of all symmetric bilinear functionals on $\dbX$, and $\cS_+(\dbX)$ be the subset of $\cS(\dbX)$ consisting of all non-negative bilinear functionals, that is $P\in\cS_+(\dbX)$, if and only if $P\in\cS(\dbX)$ and
$$P(\bx,\bx)\ges 0,\qq\forall\bx\in\dbX.$$
For any $P\in\cS(\dbX)$, define
$$\|P\|_\cS\deq\sup_{\|\bx\|\les 1}|P(\bx,\bx)|.$$
The following result  shows that $\|\cd\|_{\cS}$ is an equivalent norm of $\|\cd\|_{\cL^2}$ on $\cS(\dbX)$.

\begin{lemma}\label{lem:norm} For any $P\in\cS(\dbX)$, it holds
\bel{lem:norm-main}\|P\|_\cS\les\|P\|_{\cL^2}\les 2\|P\|_\cS.\ee
\end{lemma}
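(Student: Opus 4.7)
The first inequality $\|P\|_\cS\le\|P\|_{\cL^2}$ is immediate from the definitions: for any $\bx\in\dbX$ with $\|\bx\|\le 1$, the pair $(\bx,\bx)$ lies in the constraint set for $\|P\|_{\cL^2}$, so $|P(\bx,\bx)|\le\|P\|_{\cL^2}$, and taking the supremum over $\bx$ gives the bound.

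For the second inequality $\|P\|_{\cL^2}\le 2\|P\|_\cS$, the plan is to invoke the polarization identity, which for a symmetric bilinear functional $P$ takes the form
\[
P(\bx,\by)=\frac{1}{4}\bigl[P(\bx+\by,\bx+\by)-P(\bx-\by,\bx-\by)\bigr].
\]
This identity is purely algebraic (symmetry plus bilinearity) and does not require any inner product structure, so it applies on a general Banach space $\dbX$. Taking absolute values and using the definition of $\|\cd\|_\cS$ together with the homogeneity estimate $|P(\bz,\bz)|\le\|P\|_\cS\|\bz\|^2$ (which follows by rescaling $\bz/\|\bz\|$), I would bound
\[
|P(\bx,\by)|\le\frac{\|P\|_\cS}{4}\bigl(\|\bx+\by\|^2+\|\bx-\by\|^2\bigr)\le\frac{\|P\|_\cS}{4}\cdot 2(\|\bx\|+\|\by\|)^2.
\]
For $\|\bx\|,\|\by\|\le 1$, the right-hand side is at most $\frac{\|P\|_\cS}{4}\cdot 2\cdot 4=2\|P\|_\cS$. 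Taking the supremum over the unit ball yields the desired bound.

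I do not anticipate a serious obstacle; the only subtle point is that in a Banach space one loses the parallelogram identity $\|\bx+\by\|^2+\|\bx-\by\|^2=2(\|\bx\|^2+\|\by\|^2)$, which in a Hilbert space would sharpen the constant from $2$ to $1$. Here the weaker triangle-inequality estimate $\|\bx\pm\by\|\le\|\bx\|+\|\by\|$ is used, and it is precisely this step that accounts for the factor $2$ appearing in \rf{lem:norm-main}. Since the paper only claims the constant $2$, the argument above is sufficient.
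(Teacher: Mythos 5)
Your proof is correct and follows essentially the same route as the paper: the first inequality by restriction to the diagonal, and the second via the polarization identity for symmetric bilinear forms combined with triangle-inequality bounds on $\|\bx\pm\by\|$, yielding the constant $2$. The only cosmetic difference is that the paper bounds $\tfrac14(\|\bx+\by\|^2+\|\bx-\by\|^2)$ by $\|\bx\|^2+\|\by\|^2$ rather than by $\tfrac12(\|\bx\|+\|\by\|)^2$, which gives the same conclusion on the unit ball.
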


\begin{proof}
On one hand, by the definition, it is clear to see that $\|P\|_\cS\les\|P\|_{\cL^2}$.
On the other hand, since the bilinear operator $P$ satisfies $P(\bx,\by)=P(\by,\bx)$, we have
$$P(\bx+\by,\bx+\by)-P(\bx-\by,\bx-\by)=2P(\bx,\by)+2P(\by,\bx)=4P(\bx,\by),\q \forall\bx,\by\in\dbX.$$
Thus
$$\ba{ll}
\ss\ds|P(\bx,\by)|={|P(\bx+\by,\bx+\by)-P(\bx-\by,\bx-\by)|\over 4}\\
\ss\ds\qq\qq\les\|P\|_\cS {\|\bx+\by\|^2+\|\bx-\by\|^2\over 4}\les\|P\|_\cS [\|\bx\|^2+\|\by\|^2],\qq\forall\bx,\by\in\dbX,\ea$$
which implies
$$\|P\|_{\cL^2}=\sup_{\|\bx\|,\|\by\|\les 1}|P(\bx,\by)|
\les\|P\|_\cS\sup_{\|\bx\|,\|\by\|\les 1}[\|\bx|^2+|\by\|^2] =2 \|P\|_\cS,$$
proving our conclusion. \end{proof}

\ms

\begin{remark}\rm If $\dbX$ is a Hilbert space, by the standard results in functional analysis we have $\|\cd\|_\cS=\|\cd\|_{\cL^2}$ on $\cS(\dbX)$.
When $\dbX$ is merely a Banach space, though  $\|\cd\|_\cS=\|\cd\|_{\cL^2}$ might not hold in general,
we still have the equivalence between $\|\cd\|_\cS$ and $\|\cd\|_{\cL^2}$ on $\cS(\dbX)$.
\end{remark}

We often simply write $\|\cd\|_{\cS}$ (or $\|\cd\|_{\cL^2}$) as $\|\cd\|$ when there is no confusion. If the values $P(\bx,\bx)$ are determined for all $\bx\in\dbX$, then we can extend $L$ to the whole space $\dbX\times\dbX$ by
$$P(\bx,\by)={P(\bx+\by,\bx+\by)-P(\bx-\by,\bx-\by)\over 4},\qq\forall \bx,\by\in\dbX.$$
Clearly, the extension $P\in\cS(\dbX)$; that is $P$ is a symmetric bilinear operator on $\dbX\times\dbX$. By \autoref{lem:norm}, we know that such an extension is unique in the space $\cS(\dbX)$.
This extension will play a crucial role in establishing the well-posedness of the Riccati equation
associated with Problem (LQ-FSVIE).

\subsection{Functional It\^{o}'s formula}

Recall $\L$ from \rf{L}. As in Viens--Zhang \cite{Viens-Zhang2019}, we introduce the following metric:
$${\bf d}\big((t,\bx_t),(t',\bx_{t'}')\big)\deq|t-t'|+\sup_{s\in[0,T]}|\bx_t(s\vee t)-\bx'_{t'}(s\vee t')|,\qq\forall
(t,\bx_t),(t',\bx'_{t'})\in\L.$$
It can be shown that $\L$ is a complete metric space under ${\bf d}$.
For any $t\in[0,T)$, let $\sX_t^+$ be the subspace of $\sX_t$ consists of all $\bx_t(\cd)\in\sX_t$ such that $\bx_t(s)$ is right-differentiable at each $s\in[t,T]$. Further, we denote $\sX^0_t$ to be the subspace of $\sX_t$ consisting of all constant functions. Correspondingly, we denote $\L^+$ and $\L^0$ as follows:
$$\L^+=\{(t,\bx_t(\cd))\in\sX_t\bigm|\bx_t(\cd)\in\sX_t^+\},\qq
\L^0=\{(t,\bx_t(\cd))\in\sX_t\bigm|\bx_t(\cd)\in\sX_t^0\}.$$
Therefore,
$$\sX_t^0\varsubsetneq\sX_t^+\varsubsetneq\sX_t,\qq\L^0\varsubsetneq
\L^+\varsubsetneq\L.$$
Let $C^0(\L)$ denote the set of all functions $v:\L\to\dbR$ which are continuous under ${\bf d}$. For any $v\in C^0(\L)$ and given $(t,\bx_t(\cd))\in\L$, $v(t,\bx_t(\cd))$ takes real values. We denote $v_\bx(t,\bx_t(\cd))$ to be the Fr\'echet derivative of $v(t,\bx_t(\cd))$ with respect to $\bx_t(\cd)$. Namely $v_\bx(t,\bx_t(\cd)):\sX_t\to\dbR$ is the linear functional satisfying the following:
\bel{u_o}
v(t,\bx_t(\cd)+\eta_t(\cd))-v(t,\bx_t(\cd))=v_\bx(t,\bx_t(\cd))(\eta_t(\cd))
+o(\|\eta_t(\cd)\|),\qq\forall\eta_t(\cd)\in\sX_t.\ee
It is clear that the above Fr\'echet derivative can be calculated in the following way, which defines the G\^ateaux derivative:
\bel{gateaux} v_\bx(t,\bx_t(\cd))(\eta_t(\cd))=\lim_{\e\to0}{v(t,\bx_t(\cd)+\e\eta_t(\cd))-v(t,\bx_t(\cd))
\over\e},\qq\forall\eta_t(\cd)\in\sX_t.\ee
Similarly, we define the second order derivative $v_{\bx\bx}(t,\bx_t(\cd))$ as a bilinear functional on $\sX_t\times\sX_t$:
\bel{u_oo}\ba{ll}
\ss\ds v_\bx(t,\bx_t(\cd)+\eta_t(\cd))(\eta'_t(\cd))-v_\bx(t,\bx_t(\cd))(\eta'_t(\cd))\\
\ss\ds=v_{\bx\bx}(t,\bx_t(\cd))(\eta_t(\cd),\eta'_t(\cd))+o(\|\eta_t\|+\|\eta'_t(\cd)\|),\qq\forall
\eta_t(\cd),\eta'_t(\cd)\in\sX_t.\ea\ee
To define the right $t$-partial derivative $v_t(t,\bx_t(\cd))$ of $v(\cd\,,\cd)$ at $(t,\bx_t(\cd))$, we need to ``fix'' $\bx_t(\cd)$ and define $v(t+\d,\bx_t(\cd))$. Naturally, we define
\bel{v(t+d)}v(t+\d,\bx_t(\cd))=v(t+\d,[\bx_t]_{t+\d}(\cd)),\qq\bx_t(\cd)\in\sX_t,\ee
where
\bel{[x]}[\bx_t]_{t+\d}(s)=\bx_t(s){\bf1}_{
[t+\d,T]}(s),\qq s\in[t+\d,T],\q\forall\bx_t(\cd)\in\sX_t.\ee
According to the above, we see that
$$[\bx_t]_{t+\d}(\cd)\in\sX_{t+\d},\qq\forall\bx_t(\cd)\in\sX_t,$$
which can be regarded as the natural ``projection'' from $\sX_t$ to $\sX_{t+\d}$.
Thus, \rf{v(t+d)} makes sense. Having such a natural restriction, we can define the right $t$-partial derivative $v_t(t,\bx_t(\cd))$ as follows:
\bel{v_t}v_t(t,\bx_t(\cd))=\lim_{\d\to 0^+}{v(t+\d,\bx_t(\cd))-v(t,\bx_t(\cd))\over\d},\ee
provided the limit exists.

\ms

To get some feeling about $v_t(t,\bx_t(\cd))$, let us present a simple example.

\begin{example}{} \rm Let
$$v(t,\bx_t(\cd))=\int_t^T\bx_t(s)^\top F(t,s)\bx_t(s)ds,\qq(t,\bx(\cd))\in\L,$$
for some nice $\dbR^{n\times n}$-valued function $F(\cd\,,\cd)$, not necessarily symmetric. Then, by \rf{v(t+d)}, one has
$$v(t+\d,\bx_t(\cd))=v(t+\d,[\bx_t]_{t+\d}(\cd))=\int_{t+\d}^T\bx_t(s)^\top F(t+\d,s)\bx_t(s)ds$$
Hence, according to \rf{v_t}, we have
$$\ba{ll}
\ss\ds v_t(t,\bx_t(\cd))=\lim_{\d\to0^+}{1\over\d}\[\int_{t+\e}^\top\3n\bx_t(s)^\top F(t+\e,s)\bx_t(s)ds-\int_t^T\3n\bx_t(s)^\top F(t,s)\bx_t(s)ds\]\\
\ss\ds\qq\qq\q=\int_t^T\3n\bx_t(s)^\top F_t(t,s)\bx_t(s)ds-\bx_t(t)^\top F(t,t)\bx_t(t),\qq\forall\bx_t(\cd)\in\sX_t.\ea$$
However, if our $v(t,\bx_t(\cd))$ is given by
\bel{v}v(t,\bx_t(\cd))=\bx_t(t)^\top F_0(t)\bx_t(t)+\int_t^T\bx_t(s)^\top F(t,s)\bx_t(s)ds,\qq(t,\bx_t(\cd))\in\L,\ee
for some nice functions $F_0(\cd)$ and $F(\cd\,,\cd)$, then
$$\ba{ll}
\ss\ds v(t+\d,\bx_t(\cd))=v(t+\d,[\bx_t]_{t+\d}(\cd))\\
\ss\ds\qq\qq\qq=\bx_t(t+\d)F_0(t+\d)\bx_t(t+\d)+\int_{t+\d}^T\bx_t(s)F(t+\d,s)\bx_t(s)ds.\ea$$
Thus, for any $(t,\bx_t(\cd))\in\L^+$, one has
$$\ba{ll}
\ss\ds v_t(t,\bx_t(\cd))=\lim_{\d\to0^+}{1\over\d}\[\bx_t(t+\d)^\top F_0(t+\d)\bx_t(t+\d)-\bx(t)^\top F_0(t)\bx(t)\\
\ss\ds\qq\qq\qq\qq+\int_{t+\e}^\top\3n\bx_t(s)^\top F(t+\e,s)\bx_t(s)ds-\int_t^T\3n\bx_t(s)^\top F(t,s)\bx_t(s)ds\]\\
\ss\ds\qq\qq~=\bx_t(t)^\top\dot F_0(t)\bx_t(t)+2\dot\bx_t(t)^\top F_0(t)\bx_t(t)+\int_t^T\3n\bx_t(s)^\top F_t(t,s)\bx_t(s)ds-\bx_t(t)^\top F(t,t)\bx_t(t),\ea$$
where $\dot{\bx}_t(t)$ stands for the right-derivative of $\bx_t(\cd)$ at $t$. In particular,
$$v_t(t,\bx)=\bx^\top\[\dot F_0(t)+\int_t^TF_t(t,s)ds-F(t,t)\]\bx,\qq\forall(t,\bx)\in\L^0.$$
From the above, we see that the function $v(\cd\,,\cd)$ given by \rf{v} does not have right $t$-partial derivative at $(t,\bx_t(\cd))\in\L\setminus\L^+$.

\end{example}

Let us introduce the following.

\begin{definition} \rm (i) Let $C^{1,2}(\L)$ be the set of all $v\in C^0(\L)$ such that $v_t,v_\bx,v_{\bx\bx}$ exist on $\L$.

\ms

(ii) Let $C^{1,2}_+(\L)$ denote the set of all $v\in C^{1,2}(\L)$ such that the following are satisfied:
\begin{enumerate}[(a)]
\item There exist constants $\k,K>0$ such that, for any $(t, \bx)\in\L$,
$$|v_t(t,\bx_t(\cd))|+\sup_{{\eta_t(\cd)\in\sX_t}\atop\|\eta_t(\cd)\|\les 1}|v_\bx(t,\bx_t(\cd))(\eta_t(\cd))|+\sup_{{\eta_t(\cd),\eta'_t(\cd)\in\sX_t}
\atop{\|\eta_t(\cd)\|, \|\eta'_t(\cd)\|\les1}}|v_{\bx\bx}(t,\bx_t(\cd))(\eta_t(\cd),\eta'_t(\cd))|\les K[1+\|\bx_t(\cd)\|^\k].$$

\item For any $\eta_t(\cd),\eta'_t(\cd)\in\sX_0$, $v_t(t,\bx_t(\cd)), v_\bx(t,\bx_t(\cd))(\eta_t(\cd)),v_{\bx\bx}(t,\bx_t(\cd))(\eta_t(\cd),\eta'_t(\cd))$ are continuous in $(t,\bx_t(\cd))$, where the continuity in $t$ always means right-continuity.

\item There exist $\k>0$ and a modulus of continuity $\rho$ such that:
$$\ba{ll}
\ss\ds\big|[v_{\bx\bx}(t,\bx_t(\cd))-v_{\bx\bx}(t,\bx'_t(\cd))](\eta_t(\cd),\eta_t(\cd))\big| \les\big[1+\|\bx_t(\cd)\|^\k+\|\bx'_t(\cd)\|^\k\big]\|\eta_t(\cd)\|^2 \rho(\|\bx_t(\cd)-\bx'_t(\cd)\|),\\
\ss\ds\qq\qq\qq\qq\qq\qq\qq\qq\qq\forall(t,\bx_t(\cd))\in\L,~\eta_t(\cd),\eta'_t(\cd)\in\sX_t.\ea$$
\end{enumerate}
\end{definition}

Note that for $v\in C^{1,2}_+(\L)$, one has
\bel{Taylor}\ba{ll}
\ss\ds v(t,\bx_t(\cd)+\eta_t(\cd))-v(t,\bx_t(\cd))=v_\bx(t,\bx_t(\cd))\eta_t(\cd)
+{1\over2}v_{\bx\bx}(t,\bx_t(\cd))(\eta_t(\cd),\eta_t(\cd))+o(\|\eta_t(\cd)\|^2),\\
\ss\ds\qq\qq\qq\qq\qq\qq\qq\qq\qq\qq\qq\qq\forall\bx_t(\cd)\eta_t(\cd)\in\L.\ea\ee

The following is essentially a version of the functional It\^o's formula for SVIEs found in Viens--Zhang \cite{Viens-Zhang2019}.
For completeness, we sketch the proof.

\begin{proposition}\label{Ito} \sl Let $b,\si:\D_*[0,T]\times\Om\to\dbR^n$ be measurable such that $s\mapsto(b(s,\t),\si(s,\t))$ is differential, $\t\mapsto(b(s,\t),\si(s,\t))$ is $\dbF^t$-progressively measurable on $[t,T]$ and
$$\dbE\[\sup_{s\in[t,T]}\int_t^T\(|b(s,\t)|+|\si(s,\t)|^2\)d\t\]<\infty.$$
Let
\bel{cX}\cX(s,r)=\bx_t(s)+\int_t^rb(s,\t)d\t+\int_t^r\si(s,\t)dW(\t),\qq t\les r\les s\les T.\ee
and $v(\cd\,,\cd)\in C_+^{1,2}(\L)$. Then the following functional It\^{o}'s formula holds:
\bel{Ito-formula}\ba{ll}
\ss\ds dv(r,\cX(\cd\,,r))=\[v_t(r,\cX(\cd\,,r))
+{1\over2}v_{\bx\bx}(r,\cX(\cd\,,r))\big(\si(\cd\,,r),\si(\cd\,,r)\big) +v_\bx(r,\cX(\cd\,,r))\big(b(\cd\,,r)\big)\]dr\\
\ss\ds\qq\qq\qq\qq\qq+v_\bx(r,\cX(\cd\,,r))\big(\si(\cd\,,r)\big)dW(r).\ea\ee
\end{proposition}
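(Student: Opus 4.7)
The plan is to follow the discretization strategy of Viens--Zhang. I would fix a partition $t = r_0 < r_1 < \cdots < r_N = r$ with mesh $|\pi| \to 0$ and telescope
$$v(r,\cX(\cd\,,r)) - v(t,\bx_t(\cd)) = \sum_{i=0}^{N-1} \big[v(r_{i+1},\cX(\cd\,,r_{i+1})) - v(r_i,\cX(\cd\,,r_i))\big].$$
Each increment naturally splits into a \emph{spatial} piece and a \emph{temporal} piece by inserting the value $v(r_{i+1}, [\cX(\cd\,,r_i)]_{r_{i+1}}(\cd))$: the temporal piece is
$$v(r_{i+1}, [\cX(\cd\,,r_i)]_{r_{i+1}}(\cd)) - v(r_i, \cX(\cd\,,r_i)),$$
which, using the very definition \rf{v(t+d)}--\rf{v_t} of the right $t$-partial derivative together with the continuity assumption (b), converges to $\int_{r_i}^{r_{i+1}} v_t(\t,[\cX(\cd\,,r_i)]_\t(\cd))\,d\t$ and, in the limit $|\pi|\to 0$, assembles into $\int_t^r v_t(\t,\cX(\cd\,,\t))\,d\t$.

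For the spatial piece, the key identity is that for every $s \ges r_{i+1}$,
$$\cX(s,r_{i+1}) = \cX(s,r_i) + \int_{r_i}^{r_{i+1}} b(s,\t)\,d\t + \int_{r_i}^{r_{i+1}} \si(s,\t)\,dW(\t),$$
so that, viewed as elements of $\sX_{r_{i+1}}$, the perturbation $\eta_{r_{i+1}}(s) := \int_{r_i}^{r_{i+1}} b(s,\t)d\t + \int_{r_i}^{r_{i+1}} \si(s,\t)dW(\t)$ satisfies $\cX(\cd\,,r_{i+1}) = [\cX(\cd\,,r_i)]_{r_{i+1}}(\cd) + \eta_{r_{i+1}}(\cd)$. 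Applying the second-order Taylor expansion \rf{Taylor} to $v(r_{i+1},\cd)$ at $[\cX(\cd\,,r_i)]_{r_{i+1}}(\cd)$ produces a first-order term $v_\bx(r_{i+1},[\cX(\cd\,,r_i)]_{r_{i+1}})(\eta_{r_{i+1}})$ and a second-order term $\frac{1}{2} v_{\bx\bx}(r_{i+1},[\cX(\cd\,,r_i)]_{r_{i+1}})(\eta_{r_{i+1}},\eta_{r_{i+1}})$.

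Summing and passing to the limit, the linearity of $v_\bx$ converts the first-order contributions into the Lebesgue integral $\int_t^r v_\bx(\t,\cX(\cd\,,\t))(b(\cd\,,\t))\,d\t$ and the stochastic integral $\int_t^r v_\bx(\t,\cX(\cd\,,\t))(\si(\cd\,,\t))\,dW(\t)$; the argument here uses the growth bound in (a) together with the dominated convergence theorem and standard $L^2$-approximation of Itô integrals by simple processes. For the quadratic term, expanding $(\eta_{r_{i+1}},\eta_{r_{i+1}})$ yields a $d\t\otimes d\t$ piece that vanishes as $|\pi|\to 0$, two mixed $d\t \otimes dW$ pieces whose sums tend to zero by Itô isometry (the martingale part has vanishing quadratic variation against the bounded-variation part), and the dominant $dW\otimes dW$ piece, which by bilinearity of $v_{\bx\bx}$ and the Itô isometry converges to $\frac{1}{2}\int_t^r v_{\bx\bx}(\t,\cX(\cd\,,\t))(\si(\cd\,,\t),\si(\cd\,,\t))\,d\t$.

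The main obstacle will be controlling the error $o(\|\eta_{r_{i+1}}(\cd)\|^2)$ uniformly across the partition and under expectation: since the Taylor remainder is only controlled by the modulus of continuity of $v_{\bx\bx}$ from condition (c), one must bound $\dbE[\|\eta_{r_{i+1}}(\cd)\|^2 \rho(\|\eta_{r_{i+1}}(\cd)\|)]$ and then sum these errors. This is handled by applying Doob's maximal inequality and BDG to obtain $\dbE[\|\eta_{r_{i+1}}(\cd)\|^{2+\e}] \les C(r_{i+1}-r_i)^{1+\e/2}$ for small $\e>0$ from the integrability of $b,\si$, whence the total error is $O(|\pi|^{\e/2}) \to 0$. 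The quantitative bounds in (a) likewise let us exchange limits and expectations uniformly in $i$. Once these estimates are in place, collecting the three surviving contributions gives precisely the right-hand side of \rf{Ito-formula}, proving the proposition in integrated form; differentiating in $r$ yields the stated differential identity.
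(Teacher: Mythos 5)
Your proposal follows essentially the same route as the paper's own proof: partition and telescope, split each increment into a temporal piece handled by the definition \rf{v(t+d)}--\rf{v_t} of $v_t$ and a spatial piece handled by the Taylor expansion \rf{Taylor} applied to the perturbation $\cX(\cd\,,r_{i+1})-[\cX(\cd\,,r_i)]_{r_{i+1}}$, then pass to the limit. The only difference is that you spell out the control of the quadratic and remainder terms (BDG, It\^o isometry, modulus of continuity from condition (c)), which the paper's sketch leaves implicit by deferring to Viens--Zhang, so the argument is correct and matches the intended proof.
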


\begin{proof} Let $\Pi:r=r_0<r_1<\cds<r_N=T$ be a partition of $[r,T]$ with mesh size
$$\|\Pi\|=\max_{0\les i\les N-1}|r_{i+1}-r_i|.$$
We have
\bel{Ito-Q-Proof1}\ba{ll}
\ss\ds v\big(T,\cX(\cd\,,T)\big)-v\big(r,\cX(\cd\,,r)\big)=v\big(r_N,\cX(\cd\,,r_N)\big)-
v\big(r_0,\cX(\cd\,,r_0)\big)=\sum_{i=1}^N[I_i^1+I_i^2],\ea\ee
where
\bel{I_i^1}\ba{ll}
\ss\ds I_i^1=v\big(r_{i+1},\cX(\cd\,,r_i)\big)-v\big(r_i,\cX(\cd\,,r_i)\big),\\
\ss\ds I_i^2=v\big(r_{i+1},\cX(\cd\,,r_{i+1})\big)-v\big(r_{i+1},\cX(\cd\,,r_i)\big).\ea\ee
By \rf{v(t+d)} and \rf{v_t}, we have
\bel{I-1}\lim_{\|\Pi\|\to0}\sum_{i\ges1}I_i^1=\lim_{\|\Pi\|\to0}\sum_{i\ges1}\int_{s_i}^{s_{i+1}}v_t\big(\t,\cX(\cd\,,s_i)\big)d\t=\int_r^Tv_t\big(\t,\cX(\cd\,,\t)\big)d\t.\ee
Next,
$$\ba{ll}
\ss\ds I_i^2=v\big(r_{i+1},\cX(\cd\,,r_{i+1})\big)-v\big(r_{i+1},\cX(\cd\,,r_i)\big)
=v\big(r_{i+1},\cX(\cd\,,r_{i+1})\big)-v\big(r_{i+1},[\cX(\cd\,,r_i)]_{r_{i+1}}\big)\\
\ss\ds\q=v_\bx\big(r_{i+1},[\cX(\cd\,,r_i)]_{r_{i+1}}\big)\big(\cX(\cd\,,r_{i+1}\big)-[\cX(\cd\,,
r_i)]_{r_{i+1}}\big)\\
\ss\ds\qq+{1\over2}v_{\bx\bx}\big(r_{i+1},[\cX(\cd\,,r_i)]_{r_{i+1}}\big)\(\big(\cX(\cd\,,r_{i+1}\big)-[\cX(\cd\,,
r_i)]_{r_{i+1}}\big),\big(\cX(\cd\,,r_{i+1}\big)-[\cX(\cd\,,
r_i)]_{r_{i+1}}\big)\)\\
\ss\ds\qq+o\(\|\cX(\cd\,,r_{i+1}\big)-[\cX(\cd\,,r_i)]_{r_{i+1}}\|^2\).\ea$$
By \rf{cX}, we have
\bel{bi-sii}\cX(\th,r_{i+1})-[\cX(\th,r_i)]_{r_{i+1}}=\int_{r_i}^{r_{i+1}}b(\th,\t)d\t
+\int_{r_i}^{r_{i+1}}\si(\th,\t)dW(\t),\q\forall\th\in[s_{i+1},T].\ee
Thus,
$$\ba{ll}
\ss\ds v_\bx\big(r_{i+1},[\cX(\cd\,,r_i)]_{r_{i+1}}\big)\big(\cX(\cd\,,r_{i+1}\big)-[\cX(\cd\,,
r_i)]_{r_{i+1}}\big)\\
\ss\ds=v_\bx\big(r_{i+1},[\cX(\cd\,,r_i)]_{r_{i+1}}\big)\(\int_{r_i}^{r_{i+1}}b(\cd\,,\t)dt
+\int_{r_i}^{r_{i+1}}\si(\cd\,,\t)dW(\t)\)\\
\ss\ds=\int_{r_i}^{r_{i+1}}v_\bx\big(r_{i+1},[\cX(\cd\,,r_i)]_{r_{i+1}}\big)b(\cd\,,\t)dt
+\int_{r_i}^{r_{i+1}}v_\bx\big(r_{i+1},[\cX(\cd\,,r_i)]_{r_{i+1}}\big)\si(\cd\,,\t)dW(\t)\ea$$
The term involving $v_{\bx\bx}$ can be handled similarly. Hence,
$$\ba{ll}
\ss\ds\lim_{\|\Pi\|\to0}\sum_{i\ges0}I_i^2=\int_t^r\[v_\bx\big(\t,\cX(\cd\,,\t)\big)b(\cd\,,\t)
+{1\over2}v_{\bx\bx}\big(t,\cX(\cd\,,r\big)\big(\si(\cd\,,\t),\si(\cd\,,\t)\big)\]d\t\\
\ss\ds\qq\qq\qq\qq+\int_t^rv_\bx(r,\cX_t(\cd\,,\t))\cX(\cd\,,\t)dW(\t).\ea$$
This proves our conclusion. \end{proof}


\ms
Next, we denote by $\cS^n_t\deq\cS(\sX_t)$, the set of all bounded symmetric bilinear functional on $\sX_t\times\sX_t$, and let $C([0,T];\cS^n)$ be the set of all $P(\cd)$ having
property that
$$P(t)\in\cS^n_t,\q\forall t\in[0,T],$$
and the map $t\mapsto P(t)$ is continuous in the following sense:
$$\lim_{t\to t_0}P(t)(\bx_t,\bx_t)=P(t_0)([\bx_t]_{t_0},[\bx_t]_{t_0})=:P(t_0)(\bx_{t_0},\bx_{t_0}).$$
with respect to the norm $\|\cd\|_\cS$. Further, we let $C^1([0,T];\cS^n)$ be the set of all $P(\cd)\in C([0,T];\cS^n)$ such that
\bel{v=P(t)}v(t,\bx_t(\cd))=P(t)(\bx_t(\cd),\bx_t(\cd))\equiv\bx_t(\cd)^\top P(t)\bx_t(\cd),\qq\forall(t,\bx_t(\cd))\in\L,\ee
admits
$$v_t(t,\bx_t(\cd))\equiv\dot P(t)\big(\bx_t(\cd),\bx_t(\cd)\big),$$
which is continuous. Note that for $t\in[0,T]$, if stochastic processes $X(\cd)$ and $Y(\cd)$ are in $\sX_t$ almost surely, i.e., $X(\cd)$ and $Y(\cd)$ have continuous paths on $[t,T]$,
then
$$Y(\cd)^\top P(t)X(\cd)\equiv P(t)\big(X(\cd),Y(\cd)\big)$$
is well-defined almost surely. We have the following result.
\begin{proposition}\label{Ito-RE} \sl Let the assumptions of Proposition \ref{Ito} hold. Let $\cX(\cd,\cd)$ be defined by \rf{cX}. Then the following functional It\^{o}'s formula holds:
\begin{align}
 d\big[P(r)\big([\bx_s'(\cd)]_r,\cX(\cd\,,r)\big)\big]
&=\big[\dot P(r)\big([\bx_s'(\cd)]_r,\cX(\cd\,,r)\big)\1n+\1n P(r)\big([\bx_s'(\cd)]_r,b(\cd\,,r)\big)
\big]dr\nn\\
&\q+P(r)\big([\bx_s'(\cd)]_r,\si(\cd\,,r)\big)dW(r),\qq r\in[s,T],~\bx_s'(\cd)\in\sX_s,\nn\\
 d\big[P(r)\big(\cX(\cd\,,r),\cX(\cd\,,r)\big)\big]
&=\big[\dot P(r)\big(\cX(\cd\,,r),\cX(\cd\,,r)\big)\1n+\1n 2P(r)\big(\cX(\cd\,,r),b(\cd\,,r)\big)
+\1n P(r)\big(\si(\cd\,,r),\si(\cd\,,r)\big)
\big]dr\nn\\
&\q+2P(r)\big(\cX(\cd\,,r),\si(\cd\,,r)\big)dW(r),\qq r\in[t,T].
\end{align}

\end{proposition}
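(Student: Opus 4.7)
The strategy is to read off both identities from the functional It\^o formula of Proposition \ref{Ito} by choosing suitable $v\in C_+^{1,2}(\L)$. First I would handle the second identity, which is the cleanest: set
$$v(r,\by_r(\cd))=P(r)(\by_r(\cd),\by_r(\cd)),\qq(r,\by_r(\cd))\in\L.$$
Using the bilinearity and symmetry of $P(r)\in\cS_t^n$ together with $P(\cd)\in C^1([0,T];\cS^n)$, one reads off
$$v_\bx(r,\by_r(\cd))(\eta_r(\cd))=2P(r)(\by_r(\cd),\eta_r(\cd)),\qq
v_{\bx\bx}(r,\by_r(\cd))(\eta_r(\cd),\eta'_r(\cd))=2P(r)(\eta_r(\cd),\eta'_r(\cd)),$$
and, from the defining property \rf{v=P(t)}, $v_t(r,\by_r(\cd))=\dot P(r)(\by_r(\cd),\by_r(\cd))$. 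Substituting $\by_r(\cd)=\cX(\cd\,,r)$ into \rf{Ito-formula} and combining the two equal terms $P(r)(\cX(\cd\,,r),b(\cd\,,r))=P(r)(b(\cd\,,r),\cX(\cd\,,r))$ produces the stated factor of $2$, and similarly for the martingale part.

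For the first identity, fix $\bx'_s(\cd)\in\sX_s$ and consider the \emph{linear} (not quadratic) functional
$$v(r,\by_r(\cd))\deq P(r)\big([\bx'_s(\cd)]_r,\by_r(\cd)\big),\qq r\in[s,T],~\by_r(\cd)\in\sX_r.$$
Here the first argument is naturally defined on $[s,T]$ through the projection $[\bx'_s(\cd)]_r$ introduced in \rf{[x]}, so that the implicit prescription $v(r+\d,\by_r(\cd))=v(r+\d,[\by_r]_{r+\d}(\cd))$ used in \rf{v_t} is consistent with the projection already built into $P(\cd)$'s continuity. Linearity gives $v_{\bx\bx}\equiv0$ and $v_\bx(r,\by_r(\cd))(\eta_r(\cd))=P(r)([\bx'_s(\cd)]_r,\eta_r(\cd))$. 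For $v_t$, I would invoke polarization: since $\dot P(r)(\bz,\bz)$ is defined for all $\bz\in\sX_r$, one extends $\dot P(r)$ to mixed arguments via
$$\dot P(r)(\bx,\by)=\tfrac14\big[\dot P(r)(\bx+\by,\bx+\by)-\dot P(r)(\bx-\by,\bx-\by)\big],$$
which, thanks to the equivalence of norms in Lemma \ref{lem:norm}, yields a well-defined bounded symmetric bilinear form whose associated quantity is $v_t(r,\by_r(\cd))=\dot P(r)([\bx'_s(\cd)]_r,\by_r(\cd))$. Plugging into \rf{Ito-formula} with $\by_r(\cd)=\cX(\cd\,,r)$ gives the first formula.

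The routine but nontrivial step will be verifying $v\in C^{1,2}_+(\L)$ for both choices. Conditions (a) and (c) of the definition follow from $\|P(r)\|_\cL\les 2\|P(r)\|_\cS$ (Lemma \ref{lem:norm}) applied pointwise, combined with the uniform bound on $\|P(r)\|_\cS$ and on $\|\dot P(r)\|_\cS$ inherited from $P\in C^1([0,T];\cS^n)$. Condition (b), right-continuity of $v_t$, $v_\bx$, $v_{\bx\bx}$ in $(t,\bx_t(\cd))$, is the main subtlety: one must check that the limit in \rf{v_t} commutes correctly with the projection $[\cd]_{t+\d}$ built into the definition of $C([0,T];\cS^n)$. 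This is precisely what the continuity $\lim_{t\to t_0}P(t)(\bx_t,\bx_t)=P(t_0)([\bx_t]_{t_0},[\bx_t]_{t_0})$ in the definition of $C([0,T];\cS^n)$ was set up to guarantee; polarization transfers it to mixed arguments and to $\dot P$.

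Once $v\in C^{1,2}_+(\L)$ is confirmed, both displayed identities are direct instantiations of \rf{Ito-formula}, and no further stochastic analysis beyond what Proposition \ref{Ito} already provides is needed. The expected main obstacle is therefore bookkeeping, namely the careful extension of $\dot P$ to mixed arguments via polarization and the checking that this extension inherits the boundedness and right-continuity required by the definition of $C_+^{1,2}(\L)$; the It\^o calculus itself is entirely encapsulated in Proposition \ref{Ito}.
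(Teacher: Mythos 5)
Your proposal is correct and matches the paper's intent exactly: the paper omits the proof, stating only that it is a direct consequence of Proposition \ref{Ito}, and your argument is precisely the instantiation of that formula with the quadratic choice $v(r,\by)=P(r)(\by,\by)$ and the linear choice $v(r,\by)=P(r)([\bx'_s]_r,\by)$. The details you add beyond the paper — the polarization extension of $\dot P$ to mixed arguments and the verification that these $v$'s lie in $C^{1,2}_+(\L)$ via Lemma \ref{lem:norm} and the continuity built into $C^1([0,T];\cS^n)$ — are sound and are exactly the bookkeeping the paper leaves implicit.
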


The proof is a direct consequence  of Proposition \ref{Ito}. We omit it here.

\section{Solvability of Problem (LQ-FSVIE) and Optimality System}\label{sect-opti-system}

In this section, we shall study the functional \rf{cost} as a quadratic functional of the controls $u(\cd)$ on the Hilbert space $\sU[t,T]$. A necessary condition and a sufficient condition for the existence of an open-loop optimal control will be derived by a standard variational method. This will be expressed by an optimality system which is an FBSVIE.

\ms

For any $u(\cd)\in\sU[t,T]$, consider the following FSVIE:
\bel{state-0-u}\ba{ll}
\ds X^{0,u}(s)=\int_t^s\big[A(s,r)X^{0,u}(r)+B(s,r)u(r)\big]dr+ \int_t^s\big[C(s,r)X^{0,u}(r)+D(s,r)u(r)\big]dW(r),\\
\ds\qq\qq\qq\qq\qq\qq\qq\qq\qq\qq\qq\qq\qq\qq\qq s\in[t,T].\ea\ee
By \autoref{lemma:Well-SVIE}, the above FSVIE admits a unique solution $X^{0,u}(\cd)\in L_{\dbF^t}^2(\Om;C([t,T];\dbR^n))$
satisfying
\bel{K}\dbE\[\sup_{s\in[t,T]}|X^{0,u}(s)|^2\]\les K\dbE\int_t^T|u(s)|^2ds,\ee
where the constant $K>0$ is independent of $u(\cd)$.
Thus we can define two bounded linear operators $\G:\sU[t,T]\to L_{\dbF^t}^2(\Om;C([t,T];\dbR^n))\subseteq L_{\dbF^t}^2(t,T;\dbR^n)$
and $\h\G:\sU[t,T]\to L_{\cF^t_T}^2(\Om;\dbR^n)$ as follows:
\bel{def-G-hG}[\G u(\cd)](\cd)=X^{0,u}(\cd),\q[\h\G u(\cd)]=X^{0,u}(T),\qq\forall u(\cd)\in\sU[t,T].\ee
For any $\bx_t(\cd)\in C([t,T];\dbR^n)$, let $X^{\bx_t,0}(\cd)$ be the unique solution to the following linear un-controlled FSVIE:
\bel{state-f-0}X^{\bx_t,0}(s)=\bx_t(s)+\int_t^s A(s,r)X^{\bx_t,0}(r)dr+\int_t^sC(s,r)X^{\bx_t,0}(r)dW(r), \q s\in[t,T].\ee
Then the following linear operators $\varXi:C([t,T];\dbR^n)\to L_{\dbF^t}^2(\Om;C([t,T];\dbR^n))$ and $\h\varXi:C([t,T];\dbR^n)\to L_{\cF^t_T}^2(\Om;\dbR^n)$ can also be well-defined:
\bel{def-Xi-hXi}
[\varXi\bx_t(\cd)](\cd)=X^{\bx_t,0}(\cd),\q[\h\varXi\bx_t(\cd)]
=X^{\bx_t,0}(T),\qq\forall\bx_t(\cd)\in C([t,T];\dbR^n).\ee
With \rf{def-G-hG} and \rf{def-Xi-hXi}, the unique solution $X(\cd)\equiv X(\cd\,;t,\bx_t(\cd),u(\cd))$ of state equation \rf{state} corresponding to $(t,\bx_t(\cd))\in\L$ and $u(\cd)\in\sU[t,T]$ can be represented by
\bel{state-functional-representation}
X(\cd)=[\G u(\cd)](\cd)+[\varXi\bx_t(\cd)](\cd),\q X(T)=[\h\G u(\cd)]+[\h\varXi\bx_t(\cd)].\ee
By substituting the above into \rf{cost}, we obtain the following representation of the functional \rf{cost}:
\bel{cost-functional-rewrite}
J(t,\bx_t(\cd);u(\cd))=\lan\cM_2u,u\ran+2\lan\cM_1\bx_t,u\ran
+\lan\cM_0\bx_t,\bx_t\ran,\ee
where
\bel{M}\cM_2\deq {\G^*Q\G+\h\G^*G\h\G+R\over 2}\qq \cM_1\deq{\G^*Q\varXi+\h\G^*G\h\varXi\over 2},\qq\cM_0={\varXi^*Q\varXi+\h\varXi^*G\h\varXi\over2}.\ee
Using the representation \rf{cost-functional-rewrite},
we get the following abstract characterization for the open-loop optimal controls of Problem (LQ-FSVIE).

\begin{proposition}\label{prop:suffi-nece-condition} Let $(t,\bx_t(\cd))\in\L$ be any given free pair and $\bar u(\cd)\in\sU[t,T]$. Then $\bar u(\cd)$ is an open-loop optimal control of {\rm Problem (LQ-FSVIE)} for $(t,\bx_t(\cd))$ if and only if
\bel{main-suffi-nece-condition}\cM_2\ges0,\qq\hbox{and}\qq\cM_2\bar u+\cM_1\bx_t=0.\ee
\end{proposition}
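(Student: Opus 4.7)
The plan is to treat \rf{cost-functional-rewrite} as a purely abstract quadratic functional on the Hilbert space $\sU[t,T]$, parametrized by the free pair $(t,\bx_t(\cd))$, and exploit the standard variational characterization of minimizers. A preliminary observation is that the operator $\cM_2$ is self-adjoint: since $Q(\cd)$, $R(\cd)$, and $G$ are all symmetric, each of the three summands $\G^*Q\G$, $\h\G^*G\h\G$, and $R$ defining $\cM_2$ via \rf{M} is self-adjoint on $\sU[t,T]$. Thus $\lan\cM_2 u,v\ran=\lan\cM_2 v,u\ran$, a fact that will be used throughout.

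First, I would carry out the perturbation computation. For any $\bar u(\cd),v(\cd)\in\sU[t,T]$ and any $\e\in\dbR$, expand using bilinearity and the self-adjointness of $\cM_2$:
\begin{align*}
J(t,\bx_t(\cd);\bar u+\e v)-J(t,\bx_t(\cd);\bar u)
&= 2\e\lan\cM_2\bar u+\cM_1\bx_t,v\ran+\e^2\lan\cM_2 v,v\ran.
\end{align*}
This identity is the cornerstone of both directions of the proof.

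For the ``only if'' direction, suppose $\bar u(\cd)$ is optimal. Then the displayed expression is nonnegative for every $\e\in\dbR$ and every $v\in\sU[t,T]$. Dividing by $\e$ and letting $\e\to 0^+$ and $\e\to 0^-$ separately forces
\[
\lan\cM_2\bar u+\cM_1\bx_t,v\ran=0\qq\forall v\in\sU[t,T],
\]
hence $\cM_2\bar u+\cM_1\bx_t=0$. Substituting this back leaves $\e^2\lan\cM_2 v,v\ran\ges 0$ for every $\e$ and $v$, which gives $\cM_2\ges 0$. For the ``if'' direction, suppose \rf{main-suffi-nece-condition} holds. Writing any $u(\cd)\in\sU[t,T]$ as $u=\bar u+v$ with $v=u-\bar u$ and applying the identity with $\e=1$,
\[
J(t,\bx_t(\cd);u)-J(t,\bx_t(\cd);\bar u)=2\lan\cM_2\bar u+\cM_1\bx_t,v\ran+\lan\cM_2 v,v\ran=\lan\cM_2 v,v\ran\ges 0,
\]
proving optimality of $\bar u(\cd)$.

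There is no real obstacle here; the argument is a direct application of the Hilbert-space first-order condition once the quadratic representation \rf{cost-functional-rewrite} and the symmetry of $\cM_2$ are in hand. The only items to check carefully are (a) that $\G,\h\G,\varXi,\h\varXi$ are indeed bounded linear operators on the relevant Hilbert spaces, which follows from \autoref{lemma:Well-SVIE} and the bound \rf{K}, and (b) that the symmetry of $Q,R,G$ transfers to self-adjointness of $\cM_2$, which is immediate from \rf{M}.
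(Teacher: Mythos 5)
Your argument is correct and follows essentially the same route as the paper: both start from the perturbation identity $J(t,\bx_t;\bar u+\e v)-J(t,\bx_t;\bar u)=2\e\lan\cM_2\bar u+\cM_1\bx_t,v\ran+\e^2\lan\cM_2 v,v\ran$ and read off the equivalence with \rf{main-suffi-nece-condition}; the paper merely compresses the final first/second-order argument that you spell out (letting $\e\to0^\pm$, then substituting back), and your remarks on the self-adjointness of $\cM_2$ are an accurate justification of a step the paper uses implicitly.
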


\begin{proof} It is clear to see that $\bar u(\cd)$ is an optimal control of  Problem (LQ-FSVIE) if and only if
\bel{suffi-nece-condition1}
J(t,\bx_t(\cd);\bar u(\cd)+\l u(\cd))-J(t,\bx_t(\cd);\bar u(\cd))\ges0,\qq\forall u(\cd)\in\sU[t,T],~\l\in\dbR.\ee
For any $u(\cd)\in\sU[t,T]$ and $\l\in\dbR$, by \rf{cost-functional-rewrite}, we have
\bel{suffi-nece-condition2}
J(t,\bx_t(\cd);\bar u(\cd)+\l u(\cd))-J(t,\bx_t(\cd);\bar u(\cd))=\l^2\lan\cM_2u,u\ran+2\l\lan\cM_2\bar u+\cM_1\bx_t,u\ran.\ee
Thus \rf{suffi-nece-condition1} holds if and only if \rf{main-suffi-nece-condition} holds. The proof is thus complete. \end{proof}

To solve Problem (LQ-FSVIE), we introduce the following assumption.

\begin{taggedassumption}{(H3)}\label{ass:H3}\rm
There exists a constant $\a>0$ such that
\bel{uniformly-convex}
\lan\cM_2u,u\ran=J(t,0;u(\cd))\ges \alpha\dbE\int_t^T|u(s)|^2ds,\q\forall u(\cd)\in\sU[t,T].\ee
\end{taggedassumption}

Combining this condition with \autoref{prop:suffi-nece-condition}, we can obatin the following result easily.

\begin{corollary}\label{corollary:suffi-condition}
Let {\rm \ref{ass:H1}--\ref{ass:H3}} hold. Then, for any free pair $(t,\bx_t(\cd))\in\L$, {\rm Problem (LQ-FSVIE)} admits a unique optimal control $\bar u(\cd)$, which is given by
\bel{corollary:suffi-condition-main}
\bar u(\cd)=-(\cM_2^{-1}\cM_1\bx_t)(\cd).\ee
\end{corollary}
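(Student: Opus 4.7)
The plan is to invoke \autoref{prop:suffi-nece-condition} and reduce the existence and uniqueness of $\bar u(\cd)$ to the invertibility of $\cM_2$ on the Hilbert space $\sU[t,T]$.

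First, I would observe from the definitions in \rf{M} that $\cM_2,\cM_1,\cM_0$ are bounded operators, with $\cM_2$ acting on the Hilbert space $\sU[t,T]$ into itself, $\cM_1:C([t,T];\dbR^n)\to\sU[t,T]$, and that $\cM_2$ is self-adjoint because $Q(\cd),R(\cd),G$ are all symmetric (so $\G^*Q\G$, $\h\G^*G\h\G$ and $R$ are self-adjoint, and hence so is the sum). Assumption \ref{ass:H3} reads precisely
$$\lan\cM_2 u,u\ran\ges\a\,\dbE\int_t^T|u(s)|^2ds=\a\|u\|^2_{\sU[t,T]},\qq\forall u(\cd)\in\sU[t,T],$$
so $\cM_2$ is uniformly positive, in particular $\cM_2\ges0$.

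Next, I would conclude by Lax--Milgram (or directly: a bounded, self-adjoint, coercive operator on a Hilbert space is bijective with bounded inverse) that $\cM_2^{-1}\in\cL(\sU[t,T])$ exists and satisfies $\|\cM_2^{-1}\|\les\a^{-1}$. Consequently, for any $(t,\bx_t(\cd))\in\L$, the element $\cM_1\bx_t(\cd)\in\sU[t,T]$ is well-defined, and the linear equation
$$\cM_2\bar u+\cM_1\bx_t=0$$
admits the unique solution $\bar u(\cd)=-(\cM_2^{-1}\cM_1\bx_t)(\cd)\in\sU[t,T]$, which is exactly the formula in \rf{corollary:suffi-condition-main}.

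Finally, applying \autoref{prop:suffi-nece-condition}: since $\cM_2\ges0$ holds and since $\bar u$ satisfies the stationarity condition $\cM_2\bar u+\cM_1\bx_t=0$, the characterization \rf{main-suffi-nece-condition} is met and $\bar u(\cd)$ is an open-loop optimal control. For uniqueness, if $\bar u'(\cd)$ were another open-loop optimal control, then \autoref{prop:suffi-nece-condition} would force $\cM_2(\bar u-\bar u')=0$; by coercivity \rf{uniformly-convex}, $\bar u=\bar u'$ in $\sU[t,T]$. No step is really an obstacle here; the only care needed is to check that $\cM_2$ is self-adjoint on $\sU[t,T]$ so that coercivity alone (rather than some stronger inf-sup condition) already yields invertibility.
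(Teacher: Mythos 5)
Your proposal is correct and coincides with the paper's intended argument: the paper gives no written proof, stating only that the corollary follows easily by combining \ref{ass:H3} with \autoref{prop:suffi-nece-condition}, and your completion---boundedness and self-adjointness of $\cM_2$, coercivity from \rf{uniformly-convex}, hence invertibility with $\|\cM_2^{-1}\|\les\a^{-1}$, and then existence and uniqueness via the characterization $\cM_2\bar u+\cM_1\bx_t=0$---is exactly the standard filling-in of that step. The only cosmetic remark is that self-adjointness of $\cM_2$ is not strictly needed for invertibility (Lax--Milgram applies to bounded coercive operators without symmetry), though verifying it is harmless and indeed holds here since $Q$, $R$, $G$ are symmetric.
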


Combining \autoref{prop:suffi-nece-condition} and \cite[Theorem 5.2]{Yong2008}, we have the following results.

\begin{theorem}\label{theorem:optimality-condition}
Suppose the following convexity condition holds:
\bel{convex}\lan\cM_2u,u\ran=J(t,0;u(\cd))\ges 0,\qq\forall u(\cd)\in\sU[t,T].\ee
The control $\bar u(\cd)\in\sU[t,T]$ is an optimal control of {\rm Problem (LQ-FSVIE)} if and only if the optimality system \rf{MP}--\rf{OP0} hold.
\end{theorem}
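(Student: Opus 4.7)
The plan is to combine Proposition \ref{prop:suffi-nece-condition} with the duality principle between FSVIEs and Type-II BSVIEs of Yong (2008). Under the stated convexity hypothesis, Proposition \ref{prop:suffi-nece-condition} asserts that $\bar u(\cdot)$ is optimal if and only if $\cM_2\bar u+\cM_1\bx_t=0$ in the sense of $\sU[t,T]^*$. Expanding this identity via the definitions \eqref{def-G-hG}--\eqref{def-Xi-hXi} and \eqref{M}, and using the linearity of the state equation (so that the Gateaux variation $\delta X$ induced by a perturbation $u(\cdot)\in\sU[t,T]$ solves the same FSVIE as \eqref{state-0-u}), the condition becomes the variational identity
$$\dbE\int_t^T\big[\lan R(s)\bar u(s),u(s)\ran+\lan Q(s)\bar X(s),\delta X(s)\ran\big]ds+\dbE\lan G\bar X(T),\delta X(T)\ran=0,\qquad\forall u(\cdot)\in\sU[t,T].$$

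The next step is to convert the $\delta X$ terms into an explicit pairing against $u(\cdot)$ by introducing adjoint processes. The BSDE $\eta(s)=G\bar X(T)-\int_s^T\zeta(r)dW(r)$ supplies the martingale representation of $G\bar X(T)$ needed to handle the $C(T,\cdot)^\top$ contribution produced by the diffusion in the FSVIE. The Type-II BSVIE for $(Y(\cdot),Z(\cdot\,,\cdot))$ is designed so that, by the duality formula of Yong (2008) for FSVIEs and their Type-II adjoints (which requires the adapted M-solution framework precisely because both $Z(s,r)$ and $Z(r,s)$ appear), the sum of the two $\delta X$ terms equals
$$\dbE\int_t^T\Big\lan B(T,s)^\top G\bar X(T)+D(T,s)^\top\zeta(s)+\int_s^T\big[B(r,s)^\top Y(r)+D(r,s)^\top Z(r,s)\big]dr,\,u(s)\Big\ran ds.$$
The bracketed process is exactly the free term plus drift of the Type-I BSVIE satisfied by $(Y^0(\cdot),Z^0(\cdot\,,\cdot))$, so it equals $Y^0(s)+\int_s^T Z^0(s,r)dW(r)$; the $dW$ part vanishes in expectation after being paired with the $\dbF^t$-adapted $u(s)$. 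The variational identity therefore collapses to $\dbE\int_t^T\lan R(s)\bar u(s)+Y^0(s),u(s)\ran ds=0$ for all admissible $u(\cdot)$, which by a standard measurable selection argument is equivalent to the pointwise stationarity \eqref{MP}. This establishes the equivalence in both directions, since the computation is reversible.

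The main obstacle is the careful application of the duality step, which is why the paper defers to Theorem 5.2 of Yong (2008). The subtlety is not the conceptual variational argument but the rigorous handling of the two time variables in the Type-II BSVIE: one must verify that the integration-by-parts type formula correctly matches the coefficients $A(s,r),C(s,r)$ of the state equation against $A(r,s)^\top,C(r,s)^\top$ in the adjoint, with the diagonal $Z(r,s)$ arising from the quadratic covariation of the diffusion in the FSVIE with the $dW$-representation of $Y$. Once this duality formula is in hand, the remainder is bookkeeping, and the theorem follows by combining with Proposition \ref{prop:suffi-nece-condition}.
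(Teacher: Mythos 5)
Your proposal is correct and follows essentially the same route as the paper: the paper likewise reduces optimality to the stationarity condition $\cM_2\bar u+\cM_1\bx_t=0$ via \autoref{prop:suffi-nece-condition} and identifies the G\^ateaux derivative of $J$ with $\dbE\int_t^T\lan R(s)\bar u(s)+Y^0(s),u(s)\ran ds$ by invoking Theorem 5.2 of Yong (2008), which is exactly the duality computation you sketch. The only difference is that you spell out the duality bookkeeping (the role of $(\eta,\zeta)$, the M-solution of the Type-II BSVIE, and the vanishing of the $Z^0$ stochastic integral against adapted $u$) that the paper delegates entirely to the citation.
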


\begin{proof} From \rf{suffi-nece-condition2} we see that
$$\lim_{\l\to 0}{J(t,\bx_t(\cd);\bar u(\cd)+\l u(\cd))-J(t,\bx_t(\cd);\bar u(\cd))\over\l}=2\lan\cM_2\bar u+\cM_1\bx_t,u\ran.$$
On the other hand, by \cite[Theorem 5.2]{Yong2008} we have
$$\lim_{\l\to 0}{J(t,\bx_t(\cd);\bar u(\cd)+\l u(\cd))-J(t,\bx_t(\cd);\bar u(\cd))\over\l}=\dbE\int_t^T\lan R(s)\bar u(s)+Y^0(s),u(s)\ran ds.$$
Thus,
$$2(\cM_2\bar u+\cM_1\bx_t)(\cd)=(R\bar u+Y^0)(\cd).$$
Then the necessity follows from \cite[Theorem 5.2]{Yong2008}, and sufficiency follows from \autoref{prop:suffi-nece-condition}.
\end{proof}
\begin{remark}\rm\label{remark3} Let the controlled system reduce to an SDE. Then the optimality condition \rf{MP} implies (noting the equation for $Y^0(\cd)$ in \rf{OP0})
\bel{OC-SDE}\ba{ll}
\ss\ds0=R(s)\bar u(s)+B(s)^\top G\dbE_s[\bar X(T)]+D(s)^\top\zeta(s)+B(s)^\top \dbE_s\int_s^TY(r)dr+D(s)^\top\int_s^T Z(r,s)dr\\
\ss\ds\q=R(s)\bar u(s)+B(s)^\top\dbE_s\(G\bar X(T)+\int_s^TY(r)dr\)+D(s)^\top\(\z(s)+\int_s^T Z(r,s)dr\),\q s\in[t,T].\ea\ee
From the second equation in \rf{OP0}, one has
$$G\bar X(T)=\dbE_s[G\bar X(T)]+\int_s^T\z(\t)dW(\t).$$
Since $(Y(\cd),Z(\cd\,,\cd))$ is the M-solution to the third equation in \rf{OP0}, we have
$$Y(r)=\dbE_s[Y(r)]+\int_s^rZ(r,\t)dW(\t).$$
Hence,
\bel{SDE-OP-1}\ba{ll}
\ss\ds G\bar X(T)+\int_s^T Y(r)dr=\dbE_s[G\bar X(T)]+\int_s^T\z(\t)dW(\t)+\int_s^T\[\dbE_s[Y(r)]+\int_s^rZ(r,\t)
dW(\t)\]dr\\
\ss\ds\qq\qq\qq\qq\q~=\dbE_s[G\bar X(T)]+\dbE_s\int_s^TY(r)dr+\int_s^T\[\z(\t)+\int_\t^T Z(r,\t)dr\]dW(\t),\ea\ee
and by the third equation in \rf{OP0},
$$\ba{ll}
\ss\ds\int_s^T\3n Y(r)dr=\2n\int_s^T\3n\dbE_r\Big\{Q(r)\bar X(r)+A(r)^\top G\bar X(T)+C(r)^\top\z(r)+\2n\int_r^T\3n\big[A(r)^\top Y(\t)+C(r)^\top Z(\t,r)\big]d\t\Big\}dr\\
\ss\ds\qq\q=\int_s^T\3n\Big\{Q(r)\bar X(r)\1n+\1n A(r)^\top\dbE_r[G\bar X(T)]\1n+\1n A(r)^\top\dbE_r\2n\int_r^T\3n Y(\t)d\t\1n+\1n C(r)^\top\z(r)\1n+\1n C(r)^\top\3n\int_r^T\3n Z(\t,r)d\t\Big\}dr.\ea$$
Substituting the above into \rf{SDE-OP-1} yields that
$$\ba{ll}
\ss\ds\dbE_s[G\bar X(T)]+\dbE_s\int_s^TY(r)dr-\int_s^T\[\zeta(r)+\int_r^T Z(\t,r)d\t\]dW(r)\\
\ss\ds=G\bar X(T)+\2n\int_s^T\2n\[\{Q(r)\bar X(r)+A(r)^\top\(\dbE_r[ G\bar X(T)]+\dbE_r\2n\int_r^T\2n Y(\t)d\t\)+C(r)^\top\(\z(r)+\2n\int_r^T\2n Z(\t,r)d\t\)\]dr.\ea$$
If we denote
$$p(s)=\dbE_s\(G\bar X(T)+\int_s^TY(r)dr\),\q q(s)=\zeta(s)+\int_s^TZ(r,s)dr,\q s\in[t,T],$$
then $(p(\cd),q(\cd))$ satisfies the following BSDE:
$$p(s)=G\bar X(T)+\int_s^T\big[ A(r)^\top p(r)+C(r)^\top q(r)+Q(r)\bar X(r)\big]dr-\int_s^T q(r)dW(r),\q s\in[t,T],
$$
and the  optimality condition \rf{OC-SDE} can be rewritten as
$$
R(s)\bar u(s)+B(s)^\top p(s)+D(s)^\top q(s)=0,\q s\in[t,T],
$$
which recovers the corresponding results of Problem (LQ-SDE) (see \cite[Chapter 6]{Yong-Zhou1999}, for example).
\end{remark}


\ms

Let us now return to BSVIEs. Denote
\bel{psi}\ba{ll}
\ss\ds\psi(s)=Q(s)\bar X(s)+A(T,s)^\top G\bar X(T)+C(T,s)^{\top}\z(s),\\ [2mm]
\ss\ds\psi^0(s)=B(T,s)^\top G\bar X(T)+D(T,s)^\top\z(s),\ea \qq\q s\in[t,T].\ee
Then \rf{OP0} can written as (assuming that $R(s)^{-1}$ exists)
\bel{OP}\left\{\1n\ba{ll}
\ds\bar X(s)=\bx_t(s)+\int_t^s\big[A(s,\t)\bar X(\t)-B(s,\t)R(\t)^{-1}Y^0(\t)\big]d\t\\
\ss\ds\qq\qq+\int_t^s\big[C(s,\t)\bar X(\t)-D(s,\t)R(\t)^{-1}Y^0(\t)\big]dW(\t),\\
\ss\ds\eta(s)=G\bar X(T)-\int_s^T\z(\t)dW(\t),\\
\ds Y(s)=\psi(s)+\int_s^T\big[A(\t,s)^\top Y(\t)+C(\t,s)^\top Z(\t,s)\big]d\t-\int_s^T Z(s,\t)dW(\t),\\
\ds Y^0(s)=\psi^0(s)+\int_s^T\big[B(\t,s)^\top Y(\t)+D(\t,s)^\top Z(\t,s)\big]d\t-\int_s^TZ^0(s,\t)dW(\t).\ea\right.\ee
Note that \rf{OP} is a coupled system of FBSVIEs. To our best knowledge, there is no general result on the solvability of coupled FBSVIEs on an arbitrary time horizon. In the rest of the paper, we are going to develop a  decoupling method, which is the most important contribution of our paper. The key point is to find the so-called {\it decoupling field} for FBSVIE \rf{OP}. As a preparation, we next provide a new representation for the optimality condition of Problem (LQ-FSVIE).

\ms

We introduce the following system of BSVIEs on $[t,T]$:
\bel{Type-III}\left\{\1n\ba{ll}
\ds Y^A(s)=\int_s^T A(\t,s)^\top\[\dbE_\t[\psi(\t)]+Y^A(\t)+ Z^C(\t,\t)\]d\t-\int_s^TZ^A(s,\t)dW(\t),\\
\ss\ds Y^B(s)=\int_s^TB(\t,s)^\top\[\dbE_\t[\psi(\t)]+Y^A(\t)+Z^C(\t,\t)\]dr
-\int_s^TZ^B(s,\t)dW(r),\\
\ss\ds Y^C(s)=\int_s^TC(\t,s)^\top\[\dbE_\t[\psi(\t)]+Y^A(\t)+Z^C(\t,\t)\]d\t-\int_s^TZ^C(s,\t)
dW(\t),\\
\ss\ds Y^D(s)=\int_s^TD(\t,s)^\top\[\dbE_\t[\psi(\t)]+Y^A(\t)+Z^C(\t,\t)\]d\t
-\int_s^TZ^D(s,\t)dW(\t).\ea\right.\ee
Note that in the above, the diagonal value $Z^C(\t,\t)$ of the process $Z^C(\cd\,,\cd)$ appears, which makes such BSVIEs essentially different from Type-I and Type-II BSVIEs. Thus, we call the above Type-III BSVIEs to distinguish them from the others. BSVIEs with the diagonal values $Z(\t,\t)$ presented were introduced by Wang and Yong \cite{Wang-Yong2021} the first time while studying the time-inconsistent optimal control problems for FSVIEs. The well-posedness of such type of BSVIEs with general generators was established by Hern\'{a}ndez and Possamai \cite{HP2}. Here, it is the first time that we use the name ``Type-III'' for these types of BSVIEs. In \cite{Wang-Yong2021,HP2}, the diagonal value $Z(\t,\t)$ was due to the equilibrium HJB equations. Here, we will use this type of BSVIEs to represent the solutions of adjoint equations in the optimality system. This is surprising.

\begin{proposition}\label{Prop:Type-I-II} Let {\rm\ref{ass:H1}} hold. Then for any $\psi(\cd)\in L^2_{\cF^t_T}([t,T];\dbR^n)$ and $\psi^0(\cd)\in L^2_{\cF^t_T}([t,T];\dbR)$, the third and the fourth BSVIEs in \rf{OP} have unique adapted M-solution $(Y(\cd),Y^0(\cd),Z(\cd\,,\cd),Z^0(\cd\,,\cd))$, and Type-III BSVIE \rf{Type-III} also admits a unique adapted solution $(Y^*(\cd),Z^*(\cd\,,\cd))$ with $*=A,B,C,D$. Moreover, the following representation holds:
\bel{Prop:Type-I-II-main2}\left\{\1n\ba{ll}
\ss\ds Y(s)=\dbE_s[\psi(s)]+Y^A(s)+Z^C(s,s),\\
\ss\ds Y^0(s)=\dbE_s[\psi^0(s)]+Y^B(s)+Z^D(s,s),\ea\right.\qq s\in[t,T].\ee

\end{proposition}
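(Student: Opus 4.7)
The plan is to construct adapted solutions of the Type-III BSVIE directly from the unique adapted M-solution $(Y,Z)$ of the Type-II BSVIE in \eqref{OP}, verify \eqref{Prop:Type-I-II-main2} by a single conditional-expectation calculation, and then recover uniqueness by running the construction in reverse. Existence of $(Y,Z)$ follows from \autoref{lemma:well-ii-bsvie}; once $(Y,Z)$ is fixed, the fourth equation in \eqref{OP} is a linear Type-I BSVIE with known driver and free term, whose adapted solution $(Y^0,Z^0)$ is obtained by a parametrized BSDE argument.

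For each $*\in\{A,B,C,D\}$ I would define $Y^*(s):=\dbE_s\int_s^T *(\t,s)^\top Y(\t)d\t$, which is the adapted solution to the Type-I BSVIE $Y^*(s)=\int_s^T *(\t,s)^\top Y(\t)d\t-\int_s^T Z^*(s,\t)dW(\t)$. Using the M-solution identity $Y(\t)=\dbE_s[Y(\t)]+\int_s^\t Z(\t,r)dW(r)$ for $\t\ge s$ and interchanging the $d\t,dW(r)$ integrations via the stochastic Fubini theorem yields the explicit formula
$$Z^*(s,r)=\int_r^T *(\t,s)^\top Z(\t,r)d\t,\q r\in[s,T];$$
in particular the diagonal value $Z^C(s,s)=\int_s^T C(\t,s)^\top Z(\t,s)d\t$ is $\cF_s$-measurable because $Z(\t,s)$ is $\cF_s$-measurable for every $\t\ge s$ by the M-solution property (and similarly for $Z^D(s,s)$). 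Applying $\dbE_s[\cd]$ to the Type-II BSVIE in \eqref{OP} and using that the Itô integral has zero conditional expectation then gives
$$Y(s)=\dbE_s[\psi(s)]+\dbE_s\int_s^T A(\t,s)^\top Y(\t)d\t+\dbE_s\int_s^T C(\t,s)^\top Z(\t,s)d\t=\dbE_s[\psi(s)]+Y^A(s)+Z^C(s,s),$$
and the parallel manipulation on the fourth equation of \eqref{OP} yields $Y^0(s)=\dbE_s[\psi^0(s)]+Y^B(s)+Z^D(s,s)$. Substituting the identity $Y(\t)=\dbE_\t[\psi(\t)]+Y^A(\t)+Z^C(\t,\t)$ back into the drivers of \eqref{Type-III} collapses them to $*(\t,s)^\top Y(\t)$, confirming that $(Y^*,Z^*)_{*=A,B,C,D}$ solves \eqref{Type-III}.

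For uniqueness, any adapted solution $(\wt Y^*,\wt Z^*)_*$ of \eqref{Type-III} defines $\wt Y(\t):=\dbE_\t[\psi(\t)]+\wt Y^A(\t)+\wt Z^C(\t,\t)$; running the stochastic Fubini identification in reverse (using the M-representation of $\wt Y$) produces a process $\wt Z$ for which $(\wt Y,\wt Z)$ is an adapted M-solution of the Type-II BSVIE in \eqref{OP}, so by \autoref{lemma:well-ii-bsvie} we must have $(\wt Y,\wt Z)=(Y,Z)$; the Type-III equations then reduce to Type-I BSVIEs whose unique solutions coincide with the $(Y^*,Z^*)$ constructed above. The main obstacle is carefully executing the stochastic Fubini interchange and tracking the measurability: one has to check that $\t\mapsto Z^C(\t,\t)$ is a genuine $\dbF^t$-adapted process of the right integrability so that the drivers in \eqref{Type-III} are well-defined, and the $\cF_s$-measurability of $Z^C(s,s)$ is precisely the feature that lets the apparently self-referential driver of \eqref{Type-III} collapse to the known process $Y(\cd)$ after identification.
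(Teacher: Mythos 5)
Your construction coincides with the paper's own proof: both define $Y^*(s)=\dbE_s\int_s^T *(\t,s)^\top Y(\t)\,d\t$ and $Z^*(s,r)=\int_r^T *(\t,s)^\top Z(\t,r)\,d\t$ from the adapted M-solution of the Type-II equation via the stochastic Fubini interchange, use the $\cF_s$-measurability of $Z(\t,s)$ for $\t\ges s$ to identify the diagonal terms $Z^C(s,s)$, $Z^D(s,s)$, and obtain \eqref{Prop:Type-I-II-main2} by applying $\dbE_s$ to the third and fourth equations of \eqref{OP}. The only deviation is that you sketch uniqueness for \eqref{Type-III} by reversing the identification, where the paper simply invokes the known well-posedness of such Type-III BSVIEs; this is a presentational difference rather than a different method.
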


\begin{proof} Let $(Y(\cd),Z(\cd\,,\cd))$ be the adapted M-solution to the third equation in \rf{OP}. Using the fact
$$Y(s)=\dbE_r[Y(s)]+\int_r^sZ(s,\t)dW(\t),\qq t\les r\les s\les T,$$
we get
\bel{Prop:Type-I-II-proof1}\ba{ll}
\ss\ds\int_r^TA(s,r)^\top Y(s)ds=\int_r^TA(s,r)^\top\dbE_r[Y(s)]ds+\int_r^TA(s,r)^\top\int_t^sZ(s,\t)dW(\t)ds\\ [2mm]
\ss\ds\qq\qq\qq\qq~=\dbE_r\int_r^TA(s,r)^\top Y(s)ds+\int_r^T\int_s^TA(\t,r)^\top Z(\t,s)d\t dW(s).\ea\ee
On the other hand, by the third equation in \rf{OP},
$$Y(s)=\dbE_s\Big\{\psi(s)+\int_s^T\big[A(\t,s)^\top Y(\t)+C(\t,s)^\top Z(\t,s)\big]d\t\Big\}.$$
Thus, we have
$$\ba{ll}
\ss\ds\int_r^TA(s,r)^\top Y(s)ds=\int_r^T A(s,r)^\top\dbE_s\Big\{\psi(s)+\int_s^T\big[A(\t,s)^\top Y(\t)+C(\t,s)^\top Z(\t,s)\big]d\t\Big\}ds\\
\ss\ds=\1n\int_r^T\2n A(s,r)^\top\dbE_s[\psi(s)]ds+\2n\int_r^T\2n A(s,r)^\top\dbE_s\2n\int_s^T\2n A(\t,s)^\top Y(\t)d\t ds+\2n\int_r^T\2n A(s,r)^\top\2n\int_s^T\2n C(\t,s)^\top Z(\t,s)d\t ds,\ea$$
where the last equality is due to the fact that $Z(\t,s)$ is $\cF^t_s$-measurable. By substituting the above into \rf{Prop:Type-I-II-proof1}, we get
%
$$\ba{ll}
\ss\ds\dbE_r\1n\int_r^T\2n A(s,r)^\top Y(s)ds=\2n\int_r^T\3n A(s,r)^\top\[\dbE_s[\psi(s)]+\1n\dbE_s\2n\int_s^T\2n A(\t,s)^\top Y(\t)d\t+\1n\int_s^T\2n C(\t,s)^\top Z(\t,s)d\t\]ds\\
\ss\ds\qq\qq\qq\qq\qq-\int_r^T\int_s^TA(\t,r)^\top Z(\t,s)d\t dW(s).\ea$$
Similarly, we have
$$\ba{ll}
\ss\ds\dbE_r\1n\int_r^T\2n B(s,r)^\top Y(s)ds=\2n\int_r^T\3n B(s,r)^\top\[\dbE_s[\psi(s)]+\1n\dbE_s\2n\int_s^T\2n A(\t,s)^\top Y(\t)d\t+\1n\int_s^T\2n C(\t,s)^\top Z(\t,s)d\t\]ds\\
\ss\ds\qq\qq\qq\qq\qq-\int_r^T\int_s^TB(\t,r)^\top Z(\t,s)d\t dW(s),\ea$$
$$\ba{ll}
\ss\ds\dbE_r\1n\int_r^T\2n C(s,r)^\top Y(s)ds=\2n\int_r^T\3n C(s,r)^\top\[\dbE_s[\psi(s)]+\1n\dbE_s\2n\int_s^T\2n A(\t,s)^\top Y(\t)d\t+\1n\int_s^T\2n C(\t,s)^\top Z(\t,s)d\t\]ds\\
\ss\ds\qq\qq\qq\qq\qq-\int_r^T\int_s^TC(\t,r)^\top Z(\t,s)d\t dW(s),\ea$$
and
$$\ba{ll}
\ss\ds\dbE_r\1n\int_r^T\2n D(s,r)^\top Y(s)ds=\2n\int_r^T\3n D(s,r)^\top\[\dbE_s[\psi(s)]+\1n\dbE_s\2n\int_s^T\2n A(\t,s)^\top Y(\t)d\t+\1n\int_s^T\2n C(\t,s)^\top Z(\t,s)d\t\]ds\\
\ss\ds\qq\qq\qq\qq\qq-\int_r^T\int_s^TD(\t,r)^\top Z(\t,s)d\t dW(s).\ea$$
Thus, if we denote
\bel{Prop:Type-I-II-proof5}\ba{ll}
\ss\ds Y^A(s)=\dbE_s\int_s^TA(\t,s)^\top Y(\t)d\t,\q Z^A(r,s)=\int_s^TA(\t,r)^\top Z(\t,s)d\t,\\
\ss\ds Y^B(s)=\dbE_s\int_\t^TB(\t,s)^\top Y(\t)d\t,\q Z^B(r,s)=\int_s^TB(\t,r)^\top Z(\t,s)d\t,\\
\ss\ds Y^C(s)=\dbE_s\int_\t^TC(\t,s)^\top Y(\t)ds,\q Z^C(r,s)=\int_s^TC(\t,r)^\top Z(\t,s)d\t,\\
\ss\ds Y^D(s)=\dbE_s\int_\t^TD(\t,s)^\top Y(\t)d\t,\q Z^D(r,s)=\int_s^TD(\t,r)^\top Z(\t,s)d\t,\ea\ee
then \rf{Type-III} is satisfied. Thus, the process $(Y^A(\cd),Y^B(\cd),Y^C(\cd),Y^D(\cd),Z^A(\cd\,,\cd),Z^B(\cd\,,\cd),
Z^C(\cd\,,\cd),Z^D(\cd\,,\cd))$ defined by \rf{Prop:Type-I-II-proof5} is the unique adapted solution to \rf{Type-III}. Applying $\dbE_s$ on the last two equations in \rf{OP} yields
$$\ba{ll}
\ss\ds Y(s)=\dbE_s\[\psi(s)+\int_s^T\(A(\t,s)^\top Y(\t)+C(\t,s)^\top Z(\t,s)\)d\t\]=\dbE_s[\psi(s)]+Y^A(s)+Z^C(s,s),\\
\ss\ds Y^0(s)=\dbE_s\[\psi^0(s)+\int_s^T\(B(\t,s)^\top Y(\t)+D(\t,s)^\top Z(\t,s)\)d\t\]=\dbE_s[\psi^0(s)]+Y^B(s)+Z^D(s,s),\ea$$
proving our conclusion. \end{proof}

\begin{remark} \autoref{Prop:Type-I-II} gives an explicit relation between  a Type-II BSVIE and a Type-III BSVIE. This relation will serve as a foundation in developing our decoupling approach for the optimality system associated Problem (LQ-FSVIE).
\end{remark}

\begin{theorem}\label{theorem:optimality-condition-new}
Let {\rm\ref{ass:H1}--\ref{ass:H2}} hold. Suppose that the convexity condition \rf{convex} holds. Let $\psi(\cd)$ and $\psi^0(\cd)$ be defined by \rf{psi}. Let $(Y^A(\cd),Y^B(\cd),Y^C(\cd),Y^D(\cd),Z^A(\cd\,,\cd),Z^B(\cd\,,\cd),
Z^C(\cd\,,\cd),Z^D(\cd\,,\cd))$ be the unique adapted solution to BSVIE \rf{Type-III}. Then the control $\bar u(\cd)\in\sU[t,T]$ is an open-loop optimal control of {\rm Problem (LQ-FSVIE)} if and only if
\bel{theorem:optimality-condition-main1-rw}
R(s)\bar u(s)+\dbE_s[\psi^0(s)]+Y^B(s)+Z^D(s,s)=0,\qq s\in[t,T],~\as
\ee
\end{theorem}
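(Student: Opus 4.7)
The plan is to reduce this statement to a direct combination of \autoref{theorem:optimality-condition} and \autoref{Prop:Type-I-II}, so essentially no new analysis is required—the work has been done in those two results.

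First, I would invoke \autoref{theorem:optimality-condition}: under \ref{ass:H1}--\ref{ass:H2} and the convexity condition \rf{convex}, the control $\bar u(\cd)\in\sU[t,T]$ is optimal if and only if the optimality system \rf{MP}--\rf{OP0} holds, where in particular $(Y(\cd),Z(\cd\,,\cd))$ is the adapted M-solution to the third equation of \rf{OP0} (a Type-II BSVIE) and $(Y^0(\cd),Z^0(\cd\,,\cd))$ is the adapted M-solution to the fourth equation (a Type-I BSVIE driven by the known data $\psi^0(\cd)$ from \rf{psi} and by $Y(\cd),Z(\cd\,,\cd)$). The pointwise first-order condition this yields is
\begin{equation}\label{eq-plan-1}
R(s)\bar u(s)+Y^0(s)=0,\qq s\in[t,T],~\as
\end{equation}

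Next, with $\psi(\cd)$ and $\psi^0(\cd)$ defined through $\bar X(\cd)$ and $\z(\cd)$ exactly as in \rf{psi}, I would apply \autoref{Prop:Type-I-II} to the same $(Y(\cd),Z(\cd\,,\cd))$ and $(Y^0(\cd),Z^0(\cd\,,\cd))$. Letting $(Y^A(\cd),Y^B(\cd),Y^C(\cd),Y^D(\cd),Z^A(\cd\,,\cd),Z^B(\cd\,,\cd),Z^C(\cd\,,\cd),Z^D(\cd\,,\cd))$ be the unique adapted solution to the Type-III BSVIE \rf{Type-III} (whose existence and uniqueness are part of \autoref{Prop:Type-I-II}), the representation \rf{Prop:Type-I-II-main2} gives
\begin{equation}\label{eq-plan-2}
Y^0(s)=\dbE_s[\psi^0(s)]+Y^B(s)+Z^D(s,s),\qq s\in[t,T].
\end{equation}
Substituting \rf{eq-plan-2} into \rf{eq-plan-1} produces \rf{theorem:optimality-condition-main1-rw}, proving necessity. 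For sufficiency, one reverses the substitution: assuming \rf{theorem:optimality-condition-main1-rw}, the representation \rf{eq-plan-2} immediately yields \rf{eq-plan-1}, and then \autoref{theorem:optimality-condition} (the sufficiency part) ensures $\bar u(\cd)$ is optimal.

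There is no real obstacle to overcome here; the only subtle point is to verify that the input data $\psi(\cd),\psi^0(\cd)$ built from $(\bar X(\cd),\z(\cd))$ indeed lie in $L^2_{\cF^t_T}(t,T;\dbR^n)$ and $L^2_{\cF^t_T}(t,T;\dbR^m)$ respectively, so that both \autoref{lemma:well-ii-bsvie} and \autoref{Prop:Type-I-II} apply. This integrability follows from the estimate in \autoref{lemma:Well-SVIE}, the boundedness of $A,B,C,D,Q,G$ under \ref{ass:H1}--\ref{ass:H2}, and the standard estimate for the BSDE satisfied by $(\eta(\cd),\z(\cd))$ in \rf{OP0}. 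Once this verification is noted, the theorem follows immediately by chaining the two previous results.
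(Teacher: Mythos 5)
Your proposal is correct and follows exactly the route the paper intends: the theorem is stated as an immediate consequence of \autoref{theorem:optimality-condition} (giving $R(s)\bar u(s)+Y^0(s)=0$ together with the optimality system) and \autoref{Prop:Type-I-II} (giving $Y^0(s)=\dbE_s[\psi^0(s)]+Y^B(s)+Z^D(s,s)$), which is why the paper omits a separate proof. Your added remark on checking the square-integrability of $\psi(\cd),\psi^0(\cd)$ so that the BSVIE well-posedness results apply is a sensible, harmless refinement of the same argument.
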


\section{Derivation of the Path-dependent Riccati Equation}\label{sect-Riccati}

In this section, we will find the path-dependent Riccait equation for our Problem (LQ-FSVIE) via the HJB equation for the value function. The procedure is formal. However, the arguments over verification is rigorous. Thus, once the well-posedness of the Riccati equation is established, our Problem (LQ-FSVIE) is solved. This also decouples the optimality system \rf{OP}.

\ms


By \cite[Subsection 4.3]{Viens-Zhang2019}, the path-dependent HJB equation associated with Problem (LQ-FSVIE) reads
\bel{HJB-PPDE1}\left\{\2n\ba{ll}
\ss\ds v_t(t,\bx)+\inf_{u\in\dbR^m}\cH(t,\bx,u,v_\bx (t,\bx),v_{\bx\bx}(t,\bx))=0,\qq(t,\bx)\in\L,\\
\ss\ds v(T,\bx)={1\over2}\lan G\bx(T),\bx(T)\ran,\ea\right.\ee
where the Hamiltonian $\cH$ is defined by
\bel{Hamliton}\ba{ll}
\ss\ds\cH(t,\bx,u,v_\bx(t,\bx),v_{\bx\bx}(t,\bx))\deq{1\over 2} \big[C(\cd\,,t)\bx(t)+D(\cd\,,t)u\big]^\top v_{\bx\bx}(t,\bx)\big[C(\cd\,,t)\bx(t)
+D(\cd\,,t) u\big)\big]\\
\ss\ds\qq\qq\qq\qq\qq\qq\q+v_\bx(t,\bx)\big[A(\cd\,,t)
\bx(t)\1n+\1n B(\cd\,,t)u\big]\1n+
\1n{1\over2}\bx(t)^\top Q(t)\bx(t)\1n+
\1n{1\over2}u^\top R(t)u.\ea\ee
To understand each term in the above, let us denote
$$\ba{ll}
\ss\ds A(s,t)=(A_1(s,t),A_2(s,t),\cds,A_n(s,t)),~B(s,t)=(B_1(s,t),B_2(s,t),
\cds,B_m(s,t)),\\
\ss\ds C(s,t)=(C_1(s,t),C_2(s,t),\cds,C_n(s,t)),~D(s,t)=(D_1(s,t),D_2(s,t),
\cds,D_m(s,t)),\ea\q(s,t)\in\D_*[0,T],$$
where all $A_i(\cd\,,\cd),B_i(\cd\,,\cd),C_i(\cd\,,\cd),D_i(\cd\,,\cd)$ are $\dbR^n$-valued functions. Then, for any $v(\cd\,,\cd)\in C^{1,2}_+(\L)$, we have the following:
$$\ba{ll}
\ss\ds\big[C(\cd\,,t)\bx(t)+D(\cd\,,t)u\big]^\top v_{\bx\bx}(t,\bx)\big[C(\cd\,,t)\bx(t)
+D(\cd\,,t) u\big)\big]\\
\ss\ds=\bx(t)^\top\big[C(\cd\,,t)^\top v_{\bx\bx}(t,\bx)C(\cd\,,t)\big]\bx(t)+\bx(t)^\top\big[C(\cd\,,t)^\top
v_{\bx\bx}(t,\bx)D(\cd\,,t)\big]u\\
\ss\ds\qq+u^\top\big[D(\cd\,,t)^\top v_{\bx\bx}(t,\bx)C(\cd\,,t)\big]\bx(t)+u^\top\big[D(\cd\,,t)^\top
v_{\bx\bx}(t,\bx)D(\cd\,,t)\big]u,\ea$$
with (for all $t\in[0,T]$)
$$\ba{ll}
\ss\ds C(\cd\,,t)^\top v_{\bx\bx}(t,\bx)C(\cd\,,t)\equiv\(C_i(\cd\,,t)^\top v_{\bx\bx}(t,\bx)C_j(\cd\,,t)\)\in\dbS^n,\\
\ss\ds D(\cd\,,t)^\top v_{\bx\bx}(t,\bx)D(\cd\,,t)\equiv\(D_i(\cd\,,t)^\top v_{\bx\bx}(t,\bx)D_j(\cd\,,t)\)\in\dbS^m,\\
\ss\ds\ss\ds D(\cd\,,t)^\top v_{\bx\bx}(t,\bx)C(\cd\,,t)\equiv\(D_i(\cd\,,t)^\top v_{\bx\bx}(t,\bx)C_j(\cd\,,t)\)\in\dbR^{m\times n},\\
\ss\ds C(\cd\,,t)^\top v_{\bx\bx}(t,\bx)D(\cd\,,t)\equiv\(C_i(\cd\,,t)^\top v_{\bx\bx}(t,\bx)D_j(\cd\,,t)\)\in\dbR^{n\times m},\ea$$
and
$$v_\bx(t,\bx)\big[A(\cd\,,t)\bx(t)\1n+\1n B(\cd\,,t)u
\big]
=\big[v_\bx(t,\bx)A(\cd\,,t)\big]\bx(t)+\big[v_\bx(t,x)
B(\cd\,,t)\big]u,$$
with (for all $t\in[0,T]$)
$$\ba{ll}
\ss\ds v_\bx(t,\bx)A(\cd\,,t)=\big(v_x(t,\bx)A_1(\cd\,,t),\cds,v_\bx(t,\bx)
A_n(\cd\,,t)\big)\in\dbR^{1\times n},\\
\ss\ds v_\bx(t,\bx)B(\cd\,,t)=\big(v_x(t,\bx)B_1(\cd\,,t),\cds,v_\bx(t,\bx)
B_m(\cd\,,t)\big)\in\dbR^{1\times m}.\ea$$
From the above, we see that the following condition makes sense:
\bel{positive-condition}
\BR(t,\bx)\equiv D(\cd\,,t)^\top v_{\bx\bx}(t,\bx)D(\cd\,,t)+R(t)\ges\l I_m,\q \hbox{ for some }\l>0,\ee
If such a condition holds, then by denoting
$$\ba{ll}
\ss\ds\BS(t,\bx)=[D(\cd\,,t)^\top v_{\bx\bx}(t,\bx)C(\cd\,,t)]
\bx(t)+[v_\bx(t,\bx)B(\cd\,,t)]^\top,\\
\ss\ds\BQ(t,\bx)=[v_\bx(t,\bx)A(\cd\,,t)]\bx(t)+\bx(t)^\top
[v_\bx(t,\bx)A(\cd\,,t)]^\top\3n+\bx(t)^\top\big[C(\cd\,,t)^\top v_{\bx\bx}(t,\bx)C(\cd\,,t)+Q(t)\big]\bx(t),\ea$$
we have
$$\ba{ll}
\ss\ds2\cH(t,\bx,u,v_\bx(t,\bx),v_{\bx\bx}(t,\bx))= \big[C(\cd\,,t)\bx(t)+D(\cd\,,t)u\big]^\top v_{\bx\bx}(t,\bx)\big[C(\cd\,,t)\bx(t)
+D(\cd\,,t) u\big)\big]\\
\ss\ds\qq\qq\qq\qq\qq\qq\qq+2v_\bx(t,\bx)\big[A(\cd\,,t)
\bx(t)\1n+\1n B(\cd\,,t)u\big]\1n+
\1n\bx(t)^\top Q(t)\bx(t)\1n+
\1n u^\top R(t)u\\
\ss\ds=u^\top\(\big[D(\cd\,,t)^\top v_{\bx\bx}(t,\bx)D(\cd\,,t)\big]+R(t)\)u
\\
\ss\ds\qq+u^\top\(\big[D(\cd\,,t)^\top v_{\bx\bx}(t,\bx)C(\cd\,,t)\big]\bx(t)+[v_\bx(t,\bx)B(\cd\,,t)]^\top
\)\\
\ss\ds\qq+\(\bx(t)^\top\big[C(\cd\,,t)^\top
v_{\bx\bx}(t,\bx)D(\cd\,,t)\big]+[v_\bx(t,\bx)B(\cd\,,t)]\)u\\
\ss\ds\qq+2[v_\bx(t,\bx)A(\cd\,,t)]\bx(t)+\bx(t)^\top\big[C(\cd\,,t)^\top v_{\bx\bx}(t,\bx)C(\cd\,,t)+Q(t)\big]\bx(t)\\
\ss\ds=u^\top\BR(t,\bx)u+u^\top\BS(t,\bx)+\BS(t,\bx)^\top u+\BQ(t,\bx)\\
\ss\ds=\Big|\BR(t,\bx)^{1\over2}\[u+\BR(t,\bx)^{-1}\BS(t,\bx)\]\Big|^2
+\BQ(t,\bx)-\BS(t,\bx)\BR(t,\bx)^{-1}\BS(t,\bx)\\
\ss\ds\ges\BQ(t,\bx)-\BS(t,\bx)\BR(t,\bx)^{-1}\BS(t,\bx)
=\inf_{u\in\dbR^m}2\cH(t,\bx,u,v_\bx(t,\bx),v_{\bx\bx}
(t,\bx))\\
\ss\ds\equiv2\cH(t,\bx,\bar\G(t,\bx),v_\bx(t,\bx),v_{\bx\bx}
(t,\bx)),\ea$$
where
\bel{Causal-strategy-HJB}\ba{ll}
\ss\ds\bar\G(t,\bx)=-\BR(t,\bx)^{-1}\BS(t,\bx)\\
\ss\ds=-\big[D(\cd\,,t)^\top v_{\bx\bx}(t,\bx) D(\cd\,,t)+R(t)\big]^{-1}
\big\{[D(\cd\,,t)^\top v_{\bx\bx}(t,\bx)C(\cd\,,t)]
\bx(t)+[v_\bx(t,\bx)B(\cd\,,t)]^\top\big\}.\ea\ee
Substituting the above into \rf{HJB-PPDE1}, by some straightforward calculations, we get
\bel{HJB-PPDE2}\left\{\2n\ba{ll}
\ss\ds v_t(t,\bx)+{1\over 2}\bx(t)^\top[C(\cd\,,t)^\top v_{\bx\bx}(t,\bx)C(\cd\,,t)]\bx(t)
+{1\over 2}\bx(t)^\top Q(t)\bx(t)\\
\ss\ds\q+[v_\bx(t,\bx)A(\cd\,,t)]\bx(t)-{1\over 2}\(\big[D(\cd\,,t)^\top v_{\bx\bx}(t,\bx)C(\cd\,,t)\bx(t)
+[v_\bx(t,\bx)B(\cd\,,t)]^\top\)^\top\\
\ss\ds\q\times\big[D(\cd,t)^\top v_{\bx\bx}(t,\bx) D(\cd,t)\1n+\1n R(t)\big]^{-1}\(\big[D(\cd,t)^\top v_{\bx\bx}(t,\bx)C(\cd,t)\bx(t)+[v_\bx(t,\bx)
B(\cd\,,t)]^\top\)=0,\\
\ss\ds\qq\qq\qq\qq\qq\qq\qq\qq\qq\qq\qq\qq (t,\bx)\in\sD,\\
\ss\ds v(T,\bx)={1\over 2}\bx(T)^\top G\bx(T).\ea\right.\ee

In the rest of the paper, we shall prove that \rf{HJB-PPDE2} really admits a classical solution by introducing a new type of Riccati equations. With the solution of this Riccati equation, the form of the optimal strategy $\bar\G(\cd\,,\cd)$, defined by \rf{Causal-strategy-HJB}, can be simplified and some interesting phenomenons will be found. More importantly, we will show that the solution of the derived Riccati equation  is exactly the decoupling filed for the optimality system \rf{OP}.

\ms

Recall that $\cS(\dbX)$ is the set of all symmetric bilinear functionals on the Banach space $\dbX\times\dbX$. We let
$$\cS^n_t\deq \cS(\sX_t),\qq\forall t\in[0,T],$$
and let $C([0,T];\cS^n)$ be the set of all $P(\cd)$ having
property that
$$P(t)\in\cS^n_t,\q\forall t\in[0,T],$$
and the map $t\mapsto P(t)$ is continuous with respect to the norm $\|\cd\|_\cS$.
From the definition \rf{value-function} of the value function of Problem (LQ-FSVIE),
we can see that
\bel{def-v-P}v(t,\bx_t(\cd))={P(t)\big(\bx_t(\cd),\bx_t(\cd)
\big)\over 2},\ee
for some bilinear operator-valued function $P(\cd)\in C([0,T];\cS^n)$. We define
\bel{dot-P}\ba{ll}
\ss\ds\dot P(t)(\bx(\cd),\bx(\cd))\deq\lim_{\d\to 0^+}{ P(t+\d)\big([\bx]_{t+\d},[\bx]_{t+\d}\big)
-P(t)\big(\bx(\cd),\bx(\cd)\big)\over \d}=v_t(t,\bx),\q\forall(t,\bx)\in\L,\ea\ee
provided the limit exists. It should be a quadratic form of $\bx_t(\cd)$ again. Moreover, by \rf{gateaux}--\rf{u_oo} and \rf{def-v-P}, we get
\bel{v-x}\ba{ll}
\ss\ds v_\bx(t,\bx_t(\cd))(\eta_t(\cd))=P(t)\big(\bx_t(\cd),\eta_t(\cd)\big),\qq\forall(t,\bx_t(\cd))
\in\L,\,\eta_t(\cd)\in\sX_t,\\
\ss\ds v_{\bx\bx}(t,\bx_t(\cd))(\eta_t(\cd),\eta'_t(\cd))= P(t)(\eta_t(\cd),\eta'_t(\cd)),\qq \forall(t,\bx_t(\cd))\in\L,\,\eta_t(\cd),\eta'_t(\cd)\in\sX_t.\ea\ee
With the representation \rf{def-v-P} and \rf{v-x}, from \rf{HJB-PPDE2}, we identify that $P(\cd)$ should satisfy the following Riccati equation:
\bel{Riccati}\left\{\2n\ba{ll}
\ss\ds\dot P(t)+ P(t)A(\cd\,,t)\d_t+\d_t^\top A(\cd\,,t)^\top P(t)+\d_t^\top C(\cd\,,t)^\top P(t)C(\cd\,,t)\d_t+\d_t^\top Q(t)\d_t
\\
\ss\ds\qq-\big[B(\cd\,,t)^\top P(t)+D(\cd\,,t)^\top P(t)C(\cd\,,t)\d_t\big]^\top\big[R(t)+D(\cd\,,t)^\top P(t)D(\cd\,,t)\big]^{-1}\\
\ss\ds\qq\times\big[B(\cd\,,t)^\top P(t)+D(\cd\,,t)^\top P(t)C(\cd\,,t)\d_t\big]=0,\\
\ss\ds P(T)=G.\ea\right.\ee
The above is understood as an equation for a symmetric bilinear form valued function defined on $\L$. More precisely, for any $(t,\bx_t(\cd))\in\L$, the term $\bx_t(\cd)^\top \dot P(t)\bx_t(\cd)\equiv
\dot P(t)\big(\bx_t(\cd),\bx_t(\cd)\big)$ is understood as in \rf{dot-P}, and other terms are as follows:
$$\ba{ll}
\ss\ds\bx_t(\cd)^\top[P(t)A(\cd\,,t)\d_t]\bx_t(\cd)= P(t)\big(\bx_t(\cd),A(\cd\,,t)\bx_t(t)\big)\\
\ss\ds\qq\qq\qq\qq\qq\q
=P(t)\big(A(\cd\,,t)\bx_t(t),\bx_t(\cd)\big)=\bx_t(\cd)^\top[\d_t^\top A(\cd\,,t)^\top P(t)]\bx_t(\cd),\\
\ss\ds\bx_t(\cd)^\top[\d_t^\top\1n C(\cd\,,t)^\top\1n P(t)C(\cd\,,t)\d_t]\bx_t(\cd)\1n=\1n P(t)\big(C(\cd\,,t)\bx_t(t),
C(\cd\,,t)\bx_t(t)\big)\1n\\
\ss\ds\qq\qq\qq\qq\qq\qq\qq\q=\1n\bx_t(t)^\top\1n\(P(t)\big(C_i(\cd\,,t),
C_j(\cd\,,t)\big)\)
\bx_t(t),\\
\ss\ds\bx_t(\cd)^\top[\d_t^\top Q(t)\d_t]\bx_t(\cd)=\bx_t(t)^\top Q(t)\bx_t(t),\\
\ss\ds D(\cd\,,t)^\top P(t)D(\cd,t)=\(P(t)\big(D_i(\cd\,,t),D_j(\cd\,,t)\big)\)\in\dbS^m,\\
\ss\ds\big[B(\cd\,,t)^\top\1n P(t)\1n+\1n D(\cd\,,t)^\top\1n P(t)C(\cd\,,t)\d_t\big]\bx_t(\cd)=\(P(t)\big(B_j(\cd\,,t),\bx_t(\cd)\big)\1n+\1n P(t)\big(D_j(\cd\,,t),
C(\cd\,,t)\bx_t(t)\big)\)\in\dbR^m.\ea$$

\begin{definition}\label{def-SRS} We call $P(\cd)\in C([0,T];\cS^n)$ a {\it solution} of Riccati equation \rf{Riccati} if it satisfies \rf{Riccati} for any $t\in[0,T]$.
Further, it is called a {\it strongly regular solution},
if, in addition, the following holds:
\bel{SRS}
R(t)+D(\cd\,,t)^\top P(t)D(\cd\,,t)\ges\l I_m;\q t\in[0,T],\q \hbox{ for some }\l>0.\ee
\end{definition}

\ms

We emphasize that for any $t\in[0,T]$, the domain of $P(t)$ is  $\sX_t\times\sX_t$, which is merely a Banach space rather than a Hilbert space, and as $t\in[0,T]$ varies, it changes. Thus, \rf{Riccati} is significantly different from the so-called {\it operator-valued Riccati equation} derived from the LQ control problem for (stochastic) evolution equations (see Li--Yong \cite{Li-Yong1995} and L\"{u} \cite{Lv2019}, for example). To emphasize the new features, we would like to call \rf{Riccati} a {\it path-dependent Riccati equation}.

\begin{theorem}\label{thm:HJB-Riccati} Suppose that  the path-dependent Riccati equation \rf{Riccati} admits a strongly regular solution $P(\cd)\in C([0,T];\cS^n)$.
Then the function $v(\cd\,,\cd)$, defined by
\bel{Causal-strategy}v(t,\bx_t(\cd))={P(t)\big(\bx_t(\cd),\bx_t(\cd)\big)\over 2},\qq\forall(t,\bx_t(\cd))\in\L,\ee
is a classical of the path-dependent HJB equation \rf{HJB-PPDE2}. Moreover, the minimizer $\bar\G(\cd,\cd)$ of the Hamiltonian $\cH$ can be represented as
\bel{Causal-strategy1}\bar\G(t,\bx_t(\cd))=-\big[D(\cd\,,t)^\top P(t)D(\cd\,,t)+R(t)\big]^{-1}\big[D(\cd\,,t)^\top P(t)C(\cd\,,t)\bx_t(t)+B(\cd\,,t)^\top P(t)\bx_t(\cd)\big].\ee

\end{theorem}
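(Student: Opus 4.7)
The plan is a direct verification: given the candidate $v(t,\bx_t(\cd))=\frac12 P(t)(\bx_t(\cd),\bx_t(\cd))$, compute its (path-dependent) derivatives $v_t, v_\bx, v_{\bx\bx}$ in terms of $P(t)$ and $\dot P(t)$, substitute these into the path-dependent HJB equation \rf{HJB-PPDE2}, and show that the resulting identity for $P(\cd)$ is exactly the Riccati equation \rf{Riccati}. The minimizer formula \rf{Causal-strategy1} will then follow by inserting the same derivatives into the expression \rf{Causal-strategy-HJB}. The strong regularity condition \rf{SRS} guarantees that the inverse $[R(t)+D(\cd,t)^\top P(t)D(\cd,t)]^{-1}$ exists, so $\bar\G$ is well-defined and the completion-of-squares argument used to derive \rf{HJB-PPDE2} is legitimate.

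For the derivative computations, I would first note that since $v$ is quadratic in $\bx$, the Fr\'echet derivatives read
\[
v_\bx(t,\bx_t(\cd))(\eta_t(\cd))=P(t)(\bx_t(\cd),\eta_t(\cd)),\qq
v_{\bx\bx}(t,\bx_t(\cd))(\eta_t(\cd),\eta'_t(\cd))=P(t)(\eta_t(\cd),\eta'_t(\cd)),
\]
which are precisely the formulas \rf{v-x}. The right $t$-partial derivative is, by \rf{dot-P},
\[
v_t(t,\bx_t(\cd))=\tfrac12\dot P(t)(\bx_t(\cd),\bx_t(\cd)),
\]
where existence of this limit is built into the meaning we assign to a solution of \rf{Riccati}. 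In particular, $v\in C^{1,2}(\L)$, and one verifies the growth/regularity conditions for $C^{1,2}_+(\L)$ from the boundedness of $P(\cd)$ in $\|\cd\|_\cS$ and the bilinearity structure.

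Next I would plug these expressions into the definition \rf{Causal-strategy-HJB} of $\bar\G$. Using $v_{\bx\bx}(t,\bx)=P(t)$ and the identification of $[v_\bx(t,\bx)B(\cd,t)]^\top$ componentwise via $v_\bx(t,\bx)B_j(\cd,t)=P(t)(\bx,B_j(\cd,t))$, one obtains formula \rf{Causal-strategy1} directly. Substituting \rf{Causal-strategy1} back into the Hamiltonian (or equivalently into the already-rewritten equation \rf{HJB-PPDE2}) produces the pointwise identity
\[
\tfrac12\dot P(t)(\bx_t,\bx_t)+\cH(t,\bx_t,\bar\G(t,\bx_t),P(t)\bx_t,P(t))=0,
\]
which, after reading off the coefficients of the various bilinear forms in $\bx_t(\cd)$, matches \rf{Riccati} term by term: the quadratic $\bx_t(t)^\top Q(t)\bx_t(t)$ gives the $\d_t^\top Q(t)\d_t$ term, the drift term in $A(\cd,t)\bx_t(t)$ gives $P(t)A(\cd,t)\d_t+\d_t^\top A(\cd,t)^\top P(t)$ (using symmetry of $P(t)$), the diffusion gives $\d_t^\top C(\cd,t)^\top P(t)C(\cd,t)\d_t$, and the quadratic-in-$u$ piece, after the completion of squares, produces the Schur-complement term in $[R(t)+D(\cd,t)^\top P(t)D(\cd,t)]^{-1}$. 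The terminal condition is immediate: at $t=T$, $\sX_T\cong\dbR^n$ and $P(T)=G$ gives $v(T,\bx)=\tfrac12\bx(T)^\top G\bx(T)$.

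The calculation is mechanical, but the main technical care lies in two places. First, one must consistently interpret each term of \rf{Riccati} as acting on $(\bx_t(\cd),\bx_t(\cd))$ via the conventions listed after the equation (values such as $\bx_t(t)$ appear only through the restriction operator $\d_t$, while $\bx_t(\cd)$ as a path enters through the $A(\cd,t)$-slot of $P(t)$). Second, the identification $v_t(t,\bx_t(\cd))=\tfrac12\dot P(t)(\bx_t(\cd),\bx_t(\cd))$ needs the restriction-projection $[\cd]_{t+\d}$ from \rf{v(t+d)} to match the definition \rf{dot-P} of $\dot P$; verifying this requires nothing more than noting that $P(t+\d)$ acts on $\sX_{t+\d}$ and that $[\bx_t(\cd)]_{t+\d}\in\sX_{t+\d}$, so the two definitions align. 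Once these bookkeeping issues are dispatched, the verification collapses to a term-by-term comparison and the conclusion follows.
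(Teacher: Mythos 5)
Your proposal is correct and follows essentially the same route as the paper, whose proof is a one-line direct verification: compute $v_t,v_\bx,v_{\bx\bx}$ via \rf{dot-P} and \rf{v-x} for the quadratic form $v=\tfrac12 P(t)(\bx_t(\cd),\bx_t(\cd))$ and substitute into \rf{HJB-PPDE2}, with \rf{Causal-strategy1} read off from \rf{Causal-strategy-HJB}. Your write-up simply makes explicit the bookkeeping (the $[\cd]_{t+\d}$ projection and the interpretation of each term of \rf{Riccati}) that the paper leaves implicit.
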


\begin{proof}
By \rf{Riccati}, with \rf{dot-P}--\rf{v-x}, we can get that the function $v(\cd,\cd)$ defined by \rf{Causal-strategy}
satisfies \rf{HJB-PPDE2} immediately.
\end{proof}

\begin{remark}
From \autoref{thm:HJB-Riccati}, we have obtained the Riccati equation associated with Problem (LQ-FSVIE). The key of our approach is that $v(t,\bx_t(\cd))$, as well as $v_t(t,\bx_t(\cd))$, $v_\bx(t,\bx_t(\cd))$ and $v_{\bx\bx}(t,\bx_t(\cd))$ are well-defined on the  space $\sX_t$. Then, from the HJB equation, we correctly identify $P(\cd)$.

\end{remark}

\ms

\begin{remark}
If $\alpha(s,r)\equiv\a(r)$ for $\a(\cd\,,\cd)=A(\cd\,,\cd),B(\cd\,,\cd),C(\cd\,,\cd),D(\cd\,,\cd)$,
and $\f(t,s)\equiv x;s\in[t,T]$ for some $x\in\dbR^n$,
then state equation \rf{state} becomes an SDE and Problem (LQ-FSVIE) reduces to a classical stochastic LQ optimal control problem. The corresponding Riccati equation reads (see \cite[Chapter 6]{Yong-Zhou1999}):
\bel{Ri-E}\left\{\2n\ba{ll}
\ss\ds\dot\Si(t)+\Si(t)A(t)+A(t)^\top\Si(t)+C(t)^\top\Si(t)C(t)+Q(t)-[\Si(t)B(t)+C(t)^\top\Si(t)D(t)]\\
\ss\ds\q\times
[R(t)+D(t)^\top\Si(t)D(t)]^{-1}[B(t)^\top\Si(t)+D(t)^\top\Si(t)C(t)]=0, \\
\ss\ds\qq\qq\qq\qq\qq\qq\qq\qq\qq\qq\qq\qq\qq\qq t\in[0,T],\\
\ss\ds\Si(T)=G.\ea\right.\ee
By comparing the above with \rf{Riccati}, we have
$$P(t)(\bx_t(\cd),\bx_t(\cd))=\bx_t(t)^\top\Si(t)\bx_t(t),\qq\forall (t,\bx_t(\cd))\in\L^0.$$
The corresponding value function is given by
$$V(t,\bx_t(\cd))={1\over2}P(t)(\bx_t(\cd),\bx_t(\cd))={1\over 2}\bx_t(t)^\top\Si(t)\bx_t(t),\qq\forall(t,\bx_t(\cd))\in\L^0.$$
Then, all the derivatives of $V(t,\bx_t(\cd))$ are taken for all points in $\L^0$ which is idetified with $[0,T]\times\dbR^n$.. In the above sense, our result recovers the classical one for Problem (LQ-SDE).

\end{remark}

\section{Decoupling the Optimality System}\label{sect-decoupling}

In this section, we shall show that the solution $P(\cd)$ to Riccati equation \rf{Riccati} is exactly the so-called {\it decoupling field} for the optimality system \rf{OP},
which is a coupled FBSVIE. Note that the solution of the path-dependent HJB equation associated with Problem (LQ-FSVIE)
can be represented by \rf{def-v-P}. Thus, the connection between the Hamiltonian system \rf{OP} and the HJB equation \rf{HJB-PPDE2} associated with Problem (LQ-FSVIE) is also established.

\begin{theorem}\label{thm:Decoupling} Let {\rm \ref{ass:H1}--\ref{ass:H3}} hold. Suppose that Riccati equation \rf{Riccati} admits  a strongly regular solution $P(\cd)\in C([0,T];\cS^n)$. Then the solutions of the optimality system \rf{OP} admit the following representation:
\bel{rep}\ba{ll}
\ss\ds\bar X(s)=\bar\cX(s,s),\qq
Y^0(s)=-R(s)\Th(s)\bar \cX(\cd,s),\\
\ss\ds Y(s)=\big[C(\cd\,,s)^\top\1n P(s)C(\cd\,,s)+Q(s)\big]\bar X(s)
+\big[A(\cd\,,s)^\top\1n P(s)+C(\cd\,,s)^\top\1n P(s)D(\cd\,,s)\Th(s)\big]\bar\cX(\cd\,,s),\\
\ss\ds\qq\qq\qq\qq\qq\qq\qq\qq\qq\qq\qq\qq\qq\qq s\in[t,T],\ea\ee
where
\bel{def-Th}\Th(s)\deq-[R(s)+D(\cd\,,s)^\top P(s)D(\cd\,,s)]^{-1}[B(\cd\,,s)^\top P(s)+D(\cd\,,s)^\top P(s)C(\cd\,,s)\d_s],\q s\in[t,T],\ee
with $\bar X(\cd)$ being the unique solution to the closed-loop system
\bel{thm-causal-feedback3}\ba{ll}
\ds\bar X(s)=\bx_t(s)+\int_t^s \big[A(s,\t)\bar X(\t)+B(s,\t)\Th(\t)\bar\cX(\cd\,,\t)\big]d\t\\
\ds\qq\qq+\int_t^s \big[C(s,\t)\bar X(\t)+D(s,\t)\Th(\t)\bar\cX(\cd,\t)\big]dW(\t),\qq s\in[t,T],\ea\ee
and $\bar\cX(\cd\,,\cd)$ being the unique solution to the closed-loop auxiliary system
\bel{thm-causal-feedback2}\ba{ll}
\ss\ds\bar\cX(s,r)=\bx_t(s)+\int_t^r\big[A(s,\t)\bar \cX(\t,\t)+B(s,\t)\Th(\t)\bar\cX(\cd,\t)\big]d\t\\
\ss\ds\qq\qq\qq+\int_t^r \big[C(s,\t)\bar \cX(\t,\t)+D(s,\t)\Th(\t)\bar\cX(\cd,\t)\big]dW(\t),\qq (s,r)\in\D_*[t,T].\ea\ee
Moreover, the optimal control $ \bar u(\cd)$ admits the following causal state feedback representation:
\bel{optimal-causal-strategy}
\bar u(s)=\Th(s)\bar\cX(\cd\,,s),\q s\in[t,T].\ee
\end{theorem}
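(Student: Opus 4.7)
The plan is to reduce the theorem to a completion-of-squares argument using the functional It\^o formula, and then to read off $Y^0$ and $Y$ from the optimality system of \autoref{theorem:optimality-condition}. First I would establish the well-posedness of the closed-loop systems \rf{thm-causal-feedback3}--\rf{thm-causal-feedback2}: the strong-regularity condition \rf{SRS} together with the boundedness of $P(\cdot)$ as a symmetric bilinear-form-valued function in $C([0,T];\cS^n)$ makes $\Th(\cdot)$ lie in $L^\infty(0,T;\cL(\sX_t;\dbR^m))$, after which a contraction-mapping argument in the spirit of \autoref{lemma:Well-SVIE}, applied to the coupled pair $(\bar X,\bar\cX)$ under the constraint $\bar X(\tau)=\bar\cX(\tau,\tau)$, gives existence and uniqueness.

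Next, for an arbitrary control $u(\cd)\in\sU[t,T]$ with corresponding state $X(\cd)$, I would introduce the causal two-parameter process
$$\cX(s,r)=\bx_t(s)+\int_t^r\big[A(s,\t)X(\t)+B(s,\t)u(\t)\big]d\t+\int_t^r\big[C(s,\t)X(\t)+D(s,\t)u(\t)\big]dW(\t),$$
so that $X(r)=\cX(r,r)$, and apply \autoref{Ito-RE} to $r\mapsto P(r)(\cX(\cd\,,r),\cX(\cd\,,r))$. Substituting the Riccati equation \rf{Riccati} for the $\dot P(r)$-contribution and completing the square in $u$ (exactly mirroring the passage from \rf{HJB-PPDE1} to \rf{HJB-PPDE2}) collapses the drift to
$$-\lan Q(r)X(r),X(r)\ran-\lan R(r)u(r),u(r)\ran+\blan\BR(r)[u(r)-\Th(r)\cX(\cd\,,r)],u(r)-\Th(r)\cX(\cd\,,r)\bran,$$
with $\BR(r)=R(r)+D(\cd\,,r)^\top P(r)D(\cd\,,r)\ges\l I_m$. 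Integrating from $t$ to $T$, taking expectations, and using $P(T)=G$ yields
$$2J(t,\bx_t(\cd);u(\cd))=P(t)(\bx_t(\cd),\bx_t(\cd))+\dbE\int_t^T\blan\BR(\t)[u(\t)-\Th(\t)\cX(\cd\,,\t)],u(\t)-\Th(\t)\cX(\cd\,,\t)\bran d\t,$$
which is minimized over $\sU[t,T]$ precisely when $u(\t)=\Th(\t)\cX(\cd\,,\t)$ a.e.; in that case $\cX=\bar\cX$ and $X=\bar X$, proving the causal feedback identity \rf{optimal-causal-strategy} and simultaneously verifying $V(t,\bx_t(\cd))={1\over2}P(t)(\bx_t(\cd),\bx_t(\cd))$.

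With $\bar u(\cd)$ thus identified, \autoref{corollary:suffi-condition} ensures it coincides with the open-loop optimal control appearing in the optimality system, so the pointwise relation \rf{MP} delivers $Y^0(s)=-R(s)\Th(s)\bar\cX(\cd\,,s)$ at once. For $Y(\cd)$, I would take the candidate displayed in \rf{rep} and read off $Z(r,s)$ as the It\^o integrand produced by applying \autoref{Ito-RE} in $r$ to the right-hand side of the proposed $Y(s)$-formula (with $s$ fixed); the Riccati equation then cancels the $\dot P$ contribution, while the flow identity \rf{flow-cX} matches the driver $A(\t,s)^\top Y(\t)+C(\t,s)^\top Z(\t,s)$ and the terminal data $\psi(s)$ from \rf{psi}. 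Uniqueness of adapted M-solutions (\autoref{lemma:well-ii-bsvie}) then forces the candidate to be correct, and one checks the M-property via the explicit Itô representation of $Y(\cd)$.

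I expect the verification of the formula for $Y(\cd)$ to be the main obstacle, because the proposed expression mixes the pointwise value $\bar X(s)$ with the path-dependent trajectory $\bar\cX(\cd\,,s)$, and the Riccati equation must be exploited in a direction distinct from its role in the HJB reduction. The Type-III reformulation furnished by \autoref{Prop:Type-I-II} is likely the cleanest route to certify that the pair $(Y,Z)$ thus constructed is genuinely an adapted M-solution rather than merely a formal solution of the integral equation.
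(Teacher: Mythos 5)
Your proposal is correct in substance, but its first half takes a genuinely different route from the paper. The paper never performs a completion of squares: it takes the closed-loop pair $(\bar X,\bar\cX)$ as given, applies the functional It\^o formula of \autoref{Ito-RE} to the \emph{bilinear} pairing $r\mapsto [\bx_s]_r(\cd)^\top P(r)\bar\cX(\cd\,,r)$ with the frozen paths $\bx_s(\cd)=A(\cd\,,s),B(\cd\,,s),C(\cd\,,s),D(\cd\,,s)$, shows that the explicitly defined processes in \eqref{thm:Decoupling-proof11} solve the Type-III system \eqref{Type-III}, and then invokes \autoref{Prop:Type-I-II} to get $Y(s)=\dbE_s[\psi(s)]+Y^A(s)+Z^C(s,s)$ and $Y^0(s)=\dbE_s[\psi^0(s)]+Y^B(s)+Z^D(s,s)$; the feedback form \eqref{optimal-causal-strategy} then follows because $R\bar u+Y^0=0$ holds by construction, so the sufficiency half of \autoref{theorem:optimality-condition} yields optimality. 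You instead prove optimality first, applying \autoref{Ito-RE} to the quadratic form $P(r)\big(\cX(\cd\,,r),\cX(\cd\,,r)\big)$ along an arbitrary control and completing the square; that computation is sound (I checked the drift collapse, and strong regularity makes $\BR\ges\l I_m$), it is the analogue of the classical verification theorem, and it additionally delivers $V(t,\bx_t(\cd))={1\over2}P(t)(\bx_t(\cd),\bx_t(\cd))$, which the paper only asserts through the HJB discussion. Combined with uniqueness of the optimal control under \ref{ass:H3} and the necessity half of \autoref{theorem:optimality-condition}, it gives $Y^0(s)=-R(s)\bar u(s)=-R(s)\Th(s)\bar\cX(\cd\,,s)$ at once. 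What each approach buys: yours isolates \eqref{optimal-causal-strategy} without needing the adjoint representation at all; the paper's yields the full decoupling formulas \eqref{rep} (and the material for \autoref{corollary-YZ}) in one stroke.

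The one soft spot is your verification plan for the $Y$-formula. ``Reading off $Z(s,r)$ as the It\^o integrand'' of the candidate only produces the martingale density of $Y(s)$ in its second argument on $[s,T]$; it gives you neither the values $Z(\t,s)$, $\t\ges s$, that enter the drift of the Type-II equation nor the M-property \eqref{M1-solution}, so ``direct verification plus uniqueness of adapted M-solutions'' does not close as literally described. The fix is exactly the device you name at the end: define $Y^A,\dots,Z^D$ by pairing $P(r)$ with $[A(\cd\,,s)]_r,\dots,[D(\cd\,,s)]_r$, check that they solve the Type-III system, and pass to $(Y,Z,Y^0,Z^0)$ through \autoref{Prop:Type-I-II}, whose identification $Z^C(s,r)=\int_r^T C(\t,s)^\top Z(\t,r)d\t$ (and its analogues) supplies precisely the integrated $Z(\cd\,,s)$-information the drift requires. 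Once you make that substitution, your treatment of the $Y$ and $Y^0$ formulas coincides with the paper's proof.
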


\begin{remark} By \autoref{thm:Decoupling}, the optimality system \rf{OP} is decoupled, and the optimal control $\bar u(\cd)$ is represented as a causal state feedback (see \rf{optimal-causal-strategy}). Consequently, in principle, the optimal control is practically realizable.
\end{remark}

\begin{remark}\label{remark1} Note that when $B(\cd\,,\cd)\equiv 0$, the optimal control $\bar u(\cd)$ can be represented by
\bel{state-B-0-u}\ba{ll}
\ss\ds\bar u(s)=\Th(s)\bar\cX(\cd\,,s)= -[R(s)+D(\cd\,,s)^\top P(s)D(\cd\,,s)]^{-1}D(\cd\,,s)^\top P(s)C(\cd\,,s)\bar X(s)\\
\ss\ds\qq=:\Th^*(s)\bar X(s),\qq s\in[t,T],\ea\ee
with
\bel{state-B-0}\bar X(s)=\bx_t(s)+\int_t^sA(s,r)\bar X(r)dr+\int_t^s \big[C(s,r)\bar X(r)+D(s,r)\Th^*(r)\bar X(r)\big] dW(r),\q s\in[t,T].\ee
It is particularly worthy of pointing out that though the state equation, \rf{state-B-0} is a non-Markovian system,
the optimal control $\bar u(\cd)$, defined by \rf{state-B-0-u}, can be uniquely determined by the current value of the state.
Thus, when the drift term of \rf{state} dose not contain controls,
the causal feedback representation of the optimal control, obtained by \autoref{thm:Decoupling},
reduces to  a {\it state feedback} (or called a {\it Markovian feedback}), which is interesting.
\end{remark}

\noindent
{\it\textbf{Proof of \autoref{thm:Decoupling}.}}
From the uniqueness of the solution to \rf{thm-causal-feedback3}, we have $\bar X(s)=\bar\cX(s,s),s\in[t,T]$. Let
\bel{eta-zeta}\eta(s)=G\bar\cX(T,T)-\int_s^T\z(r)dW(r),\qq s\in[t,T],\ee
and denote
\begin{align}
&\psi(s)=Q(s)\bar\cX(s,s)+A(T,s)^\top G\bar\cX(T,T)+C(T,s)^{\top}\zeta(s),\nn\\
& \psi^0(s)=B(T,s)^{\top}G\bar\cX(T,T)+D(T,s)^{\top}\zeta(s).\label{def-psi-xi-1}
\end{align}
Then for any $\bx_s\in\sX_s$, by the functional It\^o's formula (see \autoref{Ito-RE}), we have
$$\ba{ll}
\ss\ds d\big\{[\bx_s]_r(\cd)^\top P(r)\bar\cX(\cd\,,r)\big\}\equiv d \big\{P(r)\big([\bx_s]_r(\cd),\bar\cX(\cd\,,r)\big)\big\}\\
\ss\ds\q=\big\{[\bx_s]_r(\cd)^\top\dot P(r)\bar\cX(\cd\,,r)+[\bx_s]_r(\cd)^\top P(r) [A(\cd\,,r)\bar\cX(r,r)+B(\cd\,,r)\Th(r)\bar\cX(\cd\,,r)]\big\}dr\\
\ss\ds\qq+[\bx_s]_r(\cd)^\top P(r)\big[ C(\cd\,,r)\bar\cX(r,r)+D(\cd\,,r)\Th(r)\bar\cX(\cd\,,r)\big] dW(r)\\
\ss\ds\q=\big\{P(r)\big([\bx_s]_r(\cd), A(\cd\,,r)\bar\cX(r,r)+B(\cd\,,r)\Th(r)\bar\cX(\cd\,,r)\big)
-\bx_s(r)^\top C(\cd,r)^\top P(r)C(\cd,r)\bar\cX(r,r)\\
\ss\ds\q\qq-\bx_s(r)^\top Q\bar\cX(r,r)-\bar\cX(\cd\,,r)^\top P(r)A(\cd\,,r)\bx_s(r)-[\bx_s]_r(\cd)^\top P(r)A(\cd,r)\bar\cX(r,r)\\
\ss\ds\q\qq-\bx_s(r)^\top C(\cd\,,r)^\top P(r) D(\cd\,,r)\Th(r)\bar\cX(\cd,r)-\bx_s(r)^\top P(r) B(\cd\,,r)\Th(r)\bar\cX(\cd,r)\big\}dr\\
\ss\ds\q\qq+[\bx_s]_r(\cd)^\top P(r)\big[ C(\cd\,,r)\bar\cX(r,r)+D(\cd\,,r)\Th(r)\bar\cX(\cd\,,r)]dW(r)\\
\ss\ds\q=-\big\{\bx_s(r)^\top A(\cd\,,r)^\top P(r)\bar\cX(\cd\,,r)+\bx_s(r)^\top C(\cd\,,r)^\top P(r)C(\cd\,,r)\bar\cX(r,r)\\
\ss\ds\q\qq+\bx_s(r)^\top Q(r)\bar\cX(r,r)+\bx_s(r)^\top C(\cd\,,r)^\top P(r)D(\cd\,,r)\Th(r)\bar\cX(\cd\,,r)\big\}dr\\
\ss\ds\q\qq+[\bx_s]_r(\cd)^\top P(r)[C(\cd\,,r)\bar\cX(r,r)+D(\cd\,,r)\Th(r)\bar\cX(\cd\,,r)] dW(r),\q r\in[s,T].\ea$$
Combining the above with \rf{eta-zeta}, we have
\bel{thm:Decoupling-proof1}\ba{ll}
\ds[\bx_s]_r(\cd)^\top P(r)\bar\cX(\cd\,,r)-\bx_s(T)^\top \dbE_r[G\bar\cX(T,T)]\\
\ds\q=\int_r^T\Big\{\bx_s(\t)^\top A(\cd\,,\t)^\top P(\t)\bar\cX(\cd\,,\t)+\bx_s(\t)^\top C(\cd\,,\t)^\top P(\t)C(\cd\,,\t)\bar\cX(\t,\t)\\
\ds\qq\qq+\bx_s(\t)^\top Q(\t)\bar\cX(\t,\t)+\bx_s(\t)^\top C(\cd\,,\t)^\top P(\t) D(\cd\,,\t)\Th(\t)\bar\cX(\cd\,,\t)\Big\}d\t\\
\ds\qq-\int_r^T\Big\{[\bx_s]_\t(\cd)^\top P(\t)[ C(\cd\,,\t)\bar\cX(\t,\t)+D(\cd\,,\t)\Th(\t)\bar\cX(\cd\,,\t)]
-\bx_s(T)^\top\z(\t)\Big\}dW(\t).\ea\ee
Denote
\bel{thm:Decoupling-proof11}\ba{ll}
\ss\ds Y^A(s)=A(\cd\,,s)^\top P(s)\bar\cX(\cd\,,s)-A(T,s)^\top \dbE_s[G\bar\cX(T,T)],\\
\ss\ds Z^A(s,r)=[A(\cd\,,s)]_rP(r)\big[C(\cd\,,r)\bar\cX(r,r)+D(\cd\,,r)
\Th(r)\bar\cX(\cd\,,r)\big]-A(T,s)^\top\z(r),\\
\ss\ds Y^B(s)=B(\cd\,,r)^\top P(r)\bar\cX(\cd\,,r)-B(T,s)^\top \dbE_s[G\bar\cX(T,T)],\\
\ss\ds Z^B(s,r)=[B(\cd\,,s)]_r^\top P(r)\big[ C(\cd\,,r)\bar\cX(r,r)+D(\cd\,,r)\Th(r)\bar\cX(\cd\,,r)\big]
-B(T,s)^\top\z(r),\\
\ss\ds Y^C(s)=C(\cd\,,r)^\top P(r)\bar\cX(\cd\,,r)-C(T,s)^\top \dbE_s[G\bar\cX(T,T)],\\
\ss\ds Z^C(s,r)=[C(\cd\,,s)]_r^\top P(r)\big[C(\cd\,,r)\bar\cX(r,r)+D(\cd\,,r)\Th(r)\bar\cX(\cd\,,r)]
-C(T,s)^\top\z(r),\\
\ss\ds Y^D(s)=D(\cd\,,r)^\top P(r)\bar\cX(\cd\,,r)-D(T,s)^\top \dbE_s[G\bar\cX(T,T)],\\
\ss\ds Z^D(s,r)=[D(\cd\,,s)]_r^\top P(r)\big[C(\cd\,,r)\bar\cX(r,r)+D(\cd\,,r)\Th(r)\bar\cX(\cd\,,r)]
-D(T,s)^\top\z(r).\ea\ee
Taking $\bx_s(\cd)=A(\cd\,,s)$ and $r=s$ in \rf{thm:Decoupling-proof1}, we have
\bel{thm:Decoupling-proof22-1}\ba{ll}
\ds Y^A(s)=\int_s^T\Big\{A(r,s)^\top Y^A(r)+A(r,s)^\top\dbE_r[ A(T,r)^\top G\bar\cX(T,T)]+A(r,s)^\top Z^C(r,r)\\
\ds\qq+A(r,s)^\top C(T,r)^\top\zeta(r)+A(r,s)^\top Q(r)\bar\cX(r,r)\Big\}dr-\int_s^T Z^A(s,r)dW(r)\\
\ds=\int_s^T\big\{A(r,s)^\top\dbE_r[\psi(r)]+A(r,s)^\top Y^A(r)+A(r,s)^\top Z^C(r,r)\big\}dr-\int_s^TZ^A(s,r)dW(r),\ea\ee
where $\psi(\cd)$ is defined by \rf{def-psi-xi-1}.
Similarly, by taking $\bx_s(\cd)=B(\cd\,,s),C(\cd\,,s),D(\cd\,,s)$ in \rf{thm:Decoupling-proof1}, we have
\bel{thm:Decoupling-proof22}\ba{ll}
\ss\ds Y^B(s)=\2n\int_s^T\3n\big\{B(r,s)^\top\dbE_r[\psi(r)]\1n+\1n B(r,s)^\top Y^A(r)\1n+\1n B(r,s)^\top Z^C(r,r)\big\}dr\1n-\2n\int_s^T\3n Z^B(s,r)dW(r),\\
\ss\ds Y^C(s)=\2n\int_s^T\3n\big\{C(r,s)^\top\dbE_r[\psi(r)]\1n+\1n C(r,s)^\top Y^A(r)\1n+\1n C(r,s)^\top Z^C(r,r)\big\}dr\1n-\2n\int_s^T\3n Z^C(s,r)dW(r),\\
\ss\ds Y^D(s)=\2n\int_s^T\3n\big\{D(r,s)^\top\dbE_r[\psi(r)]\1n+\1n D(r,s)^\top Y^A(r)\1n+\1n D(r,s)^\top Z^C(r,r)\big\}dr\1n-\2n\int_s^T\3n Z^D(s,r)dW(r).\ea\ee
Let $(Y(\cd),Z(\cd\,,\cd),Y^0(\cd),Z^0(\cd\,,\cd))$ be the unique adapted solution to the following BSVIE:
\bel{optimality-system-coupled-RW-de11}\left\{\2n\ba{ll}
\ds Y(s)=\psi(s)+\int_s^T\big[A(r,s)^\top Y(r)+C(r,s)^\top Z(r,s)\big]dr-\int_s^T Z(s,r)dW(r),\\
\ds Y^0(s)=\psi^0(s)+\int_s^T\big[B(r,s)^\top Y(r) +D(r,s)^\top Z(r,s)\big]dr-\int_s^TZ^0(s,r)dW(r).\ea\right.\ee
Then by \autoref{Prop:Type-I-II}, we have
$$Y(s)=\dbE_s[\psi(s)]+Y^A(s)+Z^C(s,s),\q Y^0(s)=\dbE_s[\psi^0(s)]+Y^B(s)+Z^D(s,s),\q s\in[t,T].$$
Substituting \rf{thm:Decoupling-proof11} into the above, we get
$$\ba{ll}
\ss\ds Y(s)=Q(s)\bar\cX(s,s)+A(T,s)^\top G\dbE_s[\bar\cX(T,T)]+C(T,s)^\top\z(s)+A(\cd\,,s)^\top P(s)\bar\cX(\cd\,,s)\\
\ss\ds\qq\qq-A(T,s)^\top\dbE_s[G\bar\cX(T,T)]+C(\cd\,,s)^\top P(s)[ C(\cd\,,s)\bar\cX(s,s)+D(\cd\,,s)\Th(s)\bar\cX(\cd\,,s)]
-C(T,s)^\top\z(s)\\
\ss\ds\qq=A(\cd\,,s)^\top P(s)\bar\cX(\cd\,,s)+C(\cd\,,s)^\top P(s) C(\cd\,,s)\bar\cX(s,s)+Q(s)\bar\cX(s,s)\\
\ss\ds\qq\qq+C(\cd\,,s)^\top P(s)D(\cd\,,s)\Th(s)\bar\cX(\cd\,,s),\ea$$
and
$$\ba{ll}
\ss\ds Y^0(s)=B(T,s)^{\top}G\dbE_s[\bar\cX(T,T)]+D(T,s)^{\top}\z(s)
+B(\cd\,,s)^\top P(s)\bar\cX(\cd\,,s)-B(T,s)^\top \dbE_s[G\bar\cX(T,T)]\\
\ss\ds\qq\qq\q+D(\cd\,,s)^\top P(s)[ C(\cd\,,s)\bar\cX(s,s)+D(\cd\,,s)\Th(s)\bar\cX(\cd\,,s)]
-D(T,s)^\top\z(s)\\
\ss\ds\qq\q=B(\cd\,,s)^\top P(s)\bar\cX(\cd\,,s)+D(\cd\,,s)^\top P(s)[C(\cd\,,s)\bar\cX(s,s)+D(\cd\,,s)\Th(s)\bar\cX(\cd\,,s)]\\
\ss\ds\qq\q=-R(s)\Th(s)\bar\cX(\cd\,,s).\ea$$
Then the desired results can be obtained easily. $\hfill\qed$

\ms

Let $\sX^*([s,T];\dbR^n)$ be the dual space of $\sX([s,T];\dbR^n)$;
that is the space consisting of all the bounded linear functionals on $\sX([s,T];\dbR^n)$.
Clearly, for \as\, $\om\in\Om$,
\bel{}
\dbE_s[Y(\cd)]|_{[s,T]}(\om)\in \sX^*([s,T];\dbR^n) \q\hbox{and}\q Z(\cd,s)|_{[s,T]}(\om)\in \sX^*([s,T];\dbR^n),
\ee
by letting
\begin{align}
\lan \dbE_s[Y(\cd)]|_{[s,T]}, \bx_s(\cd)\ran(\om)&\deq \int_s^T\bx_s(r)^\top \dbE_s[ Y(r)](\om)dr=\dbE_s\int_s^T\bx_s(r)^\top Y(r)dr(\om),\nn\\
\lan  Z(\cd,s)|_{[s,T]}, \bx_s(\cd)\ran(\om)&\deq\int_s^T\bx_s(r)^\top Z(r,s)dr(\om),\q \forall\bx_s\in\sX([s,T];\dbR^n).
\end{align}
The following result provides a representation for $(\dbE_s[Y(\cd)]|_{[s,T]},Z(\cd,s)|_{[s,T]})$ in the space $\sX^*([s,T];\dbR^n)$.

\begin{theorem}\label{corollary-YZ}
For any $s\in[t,T]$, the following equalities hold
\begin{align}
\dbE_s[Y(\cd)]|_{[s,T]}&= P(s)\bar\cX(\cd,s)- \dbE_s[G\bar\cX(T,T)],\nn\\
Z(\cd,s)|_{[s,T]}&= P(s)C(\cd,s)\bar\cX(s,s)+P(s)D(\cd,s)\Th(s)\bar\cX(\cd,s)- \zeta(s),
\label{corollary-YZ-main1}
\end{align}
in the space $\sX^*([s,T];\dbR^n)$; that is for any $\bx_s\in C([s,T];\dbR^n)$ and \as\, $\om\in\Om$,
\begin{align}
\dbE_s\int_s^T\bx_s(r)^\top Y(r)dr(\om)&=\big\{\bx_s(\cd)^\top P(s)\bar\cX(\cd,s)-\bx_s(T)^\top \dbE_s[G\bar\cX(T,T)]\big\}(\om),\nn\\
\int_s^T\bx_s(r)^\top Z(r,s)dr(\om)&=\big\{\bx_s(\cd)^\top P(s)C(\cd,s)\bar\cX(s,s)+\bx_s(\cd)^\top P(s)D(\cd,s)\Th(s)\bar\cX(\cd,s)\nn\\
&\q-\bx_s(T)^\top \zeta(s)\big\}(\om).
\label{corollary-YZ-main2}
\end{align}
\end{theorem}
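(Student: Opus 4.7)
The plan is to apply the functional Itô formula (\autoref{Ito-RE}) to the process $r\mapsto P(r)([\bx_s]_r(\cd),\bar\cX(\cd,r))$ on $[s,T]$ for a generic deterministic path $\bx_s(\cd)\in\sX_s$, and then to extract the two claimed identities by matching, respectively, the $\cF_s$-measurable part and the martingale integrand against the Brownian decomposition provided by the adapted M-solution property of $(Y,Z)$.

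Concretely, I would first repeat verbatim the Itô calculation performed in the proof of \autoref{thm:Decoupling} between \eqref{thm:Decoupling-proof1} and \eqref{thm:Decoupling-proof11}, but with an arbitrary $\bx_s(\cd)\in\sX_s$ in place of the specific choices $A(\cd,s),B(\cd,s),C(\cd,s),D(\cd,s)$. The path-dependent Riccati equation \eqref{Riccati} collapses the drift, and the representation \eqref{rep} of $Y(r)$ rewrites that drift as $-\bx_s(r)^\top Y(r)\,dr$. Integrating from $s$ to $T$, using $P(T)=G$, $\bar\cX(T,T)=\bar X(T)$, and the martingale decomposition $G\bar X(T)=\dbE_s[G\bar X(T)]+\int_s^T\zeta(r)\,dW(r)$ coming from the second equation of \eqref{OP}, produces the key identity
\begin{align*}
\int_s^T\bx_s(r)^\top Y(r)\,dr &= \bx_s(\cd)^\top P(s)\bar\cX(\cd,s)-\bx_s(T)^\top\dbE_s[G\bar\cX(T,T)]+\int_s^T H(\tau)\,dW(\tau),
\end{align*}
where $H(\tau)\deq[\bx_s]_\tau(\cd)^\top P(\tau)\big[C(\cd,\tau)\bar\cX(\tau,\tau)+D(\cd,\tau)\Th(\tau)\bar\cX(\cd,\tau)\big]-\bx_s(T)^\top\zeta(\tau)$. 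Taking $\dbE_s$ of both sides kills the stochastic integral and classical Fubini yields $\dbE_s\int_s^T\bx_s(r)^\top Y(r)\,dr=\int_s^T\bx_s(r)^\top\dbE_s[Y(r)]\,dr$, which establishes the first identity.

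For the second identity, I would independently obtain a martingale representation of $\int_s^T\bx_s(r)^\top Y(r)\,dr$ by substituting the adapted M-solution identity $Y(r)=\dbE_s[Y(r)]+\int_s^r Z(r,\tau)\,dW(\tau)$ and applying stochastic Fubini to swap the order of integration:
$$\int_s^T\bx_s(r)^\top Y(r)\,dr=\dbE_s\int_s^T\bx_s(r)^\top Y(r)\,dr+\int_s^T\Big[\int_\tau^T\bx_s(r)^\top Z(r,\tau)\,dr\Big]dW(\tau).$$
Comparing integrands in the unique Brownian martingale representation with the Itô expansion above yields, for a.e.\ $\tau\in[s,T]$ and almost surely,
$$\int_\tau^T\bx_s(r)^\top Z(r,\tau)\,dr=H(\tau).$$

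The delicate step, and the one I expect to be the main obstacle, is promoting this a.e.\ identity in $\tau$ to its particular value at the endpoint $\tau=s$ required for the second identity. I would handle this by noting that $H(\tau)$ is right-continuous at $s$ (since $[\bx_s]_\tau\to\bx_s$ in the sup-norm as $\tau\downarrow s$, and $P(\cd),\bar\cX(\cd,\cd),\Th(\cd),\zeta(\cd)$ admit versions continuous at $s$), and by using the canonical $\cF_s$-measurable choice of $Z(\cd,s)$ afforded by the M-solution construction. This is precisely the convention under which the diagonal values $Z^C(\tau,\tau)$ appearing in the Type-III BSVIE \eqref{Type-III} are well-defined, and it matches the identification \eqref{thm:Decoupling-proof11} already used at $r=s$ in the proof of \autoref{thm:Decoupling} for the special cases $\bx_s=A(\cd,s),B(\cd,s),C(\cd,s),D(\cd,s)$. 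With this choice, the a.e.\ identity promotes to a pointwise identity at $\tau=s$, completing the proof of the second identity.
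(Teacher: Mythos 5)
Your argument is correct and is essentially the paper's own proof: both compare the functional It\^o expansion of $r\mapsto P(r)([\bx_s]_r(\cd),\bar\cX(\cd,r))$ (i.e.\ \eqref{thm:Decoupling-proof1}) with the $\cF_s$-conditional/martingale decomposition of $\int_s^T\bx_s(r)^\top Y(r)\,dr$ obtained from the M-solution property and stochastic Fubini, and then identify the $\cF_s$-measurable parts and the Brownian integrands. The only differences are organizational --- you substitute the already-proved representation \eqref{rep} of $Y$ to recognize the It\^o drift as $\bx_s(r)^\top Y(r)$, where the paper instead routes through the Type-III identification \eqref{thm:Decoupling-proof11}--\eqref{thm:Decoupling-proof22} and \autoref{Prop:Type-I-II}, and you are more explicit than the paper about promoting the a.e.-in-$\tau$ equality of integrands to the diagonal value $\tau=s$.
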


\begin{proof} By the same arguments as in the proof of \autoref{Prop:Type-I-II}, we have
\begin{align}
\dbE_s\int_s^T\bx_s(r)^\top Y(r)dr&=\int_s^T\bx_s(r)^\top\Big\{A(T,r)^\top\dbE_r[G\bar\cX(T,T)]+ C(T,r)^\top\zeta(r)+ Q(r)\bar\cX(r,r)\nn\\
&\q+\dbE_r\int_r^TA(\t,r)^\top Y(\t)d\t+\int_r^TC(\t,r)^\top Z(\t,r)d\t\Big\}dr\nn\\
&\q-\int_s^T\int_r^T\bx_s(\t)^\top Z(\t,r)d\t dW(r),\label{thm:Decoupling-proof2}
\end{align}
and
\begin{align}
& Y^A(s)=\dbE_s\int_s^T A(r,s)^\top Y(r)dr,\q  Z^C(s,r)=\int_r^T C(\t,s)^\top Z(\t,r)d\t,
\end{align}
where $(Y^A(\cd), Z^C(\cd,\cd))$ is uniquely determined by BSVIEs \rf{thm:Decoupling-proof22-1}--\rf{thm:Decoupling-proof22}.
Thus, we can rewrite \rf{thm:Decoupling-proof2} as
\begin{align}
&\dbE_s\int_s^T\bx_s(r)^\top Y(r)dr=\int_s^T\bx_s(r)^\top\Big\{A(T,r)^\top\dbE_r[G\bar\cX(T,T)]+ C(T,r)^\top\zeta(r)+ Q(r)\bar\cX(r,r)\nn\\
&\qq+Y^A(r)+Z^C(r,r)\Big\}dr-\int_s^T\int_r^T\bx(\t)^\top Z(\t,r)d\t dW(r).\label{thm:Decoupling-proof23}
\end{align}
On the other hand, by  \rf{thm:Decoupling-proof1}, we have
\begin{align}
&\bx_s(\cd)^\top P(s)\bar\cX(\cd,s)-\bx_s(T)^\top \dbE_s[G\bar\cX(T,T)]\nn\\
&\q=\int_s^T\Big\{\bx_s(r)^\top A(\cd,r)^\top P(r)\bar\cX(\cd,r)+\bx_s(r)^\top C(\cd,r)^\top P(r)C(\cd,r)\bar\cX(r,r)\nn\\
&\qq\qq+\bx_s(r)^\top Q(r)\bar\cX(r,r)+\bx_s(r)^\top C(\cd,r)^\top P(r) D(\cd,r)\Th(r)\bar\cX(\cd,r)\Big\}dr\nn\\
&\qq-\int_s^T\Big\{[\bx_s(\cd)]_r^\top P(r)[ C(\cd,r)\bar\cX(r,r)+D(\cd,r)\Th(r)\bar\cX(\cd,r)]-\bx_s(T)^\top \zeta(r)\Big\} dW(r).
\end{align}
Substituting  \rf{thm:Decoupling-proof11} into the above, we have
\begin{align}
&\bx_s(\cd)^\top P(s)\bar\cX(\cd,s)-\bx_s(T)^\top \dbE_s[G\bar\cX(T,T)]\nn\\
&\q=\int_s^T\bx_s(r)^\top\Big\{A(T,r)^\top\dbE_r[G\bar\cX(T,T)]+ C(T,r)^\top\zeta(r)+ Q(r)\bar\cX(r,r)+Y^A(r)+Z^C(r,r)\Big\}dr\nn\\
&\qq-\int_s^T\Big\{[\bx_s(\cd)]_r^\top P(r)[ C(\cd,r)\bar\cX(r,r)+D(\cd,r)\Th(r)\bar\cX(\cd,r)]-\bx_s(T)^\top \zeta(r)\Big\} dW(r).
\label{thm:Decoupling-proof24}
\end{align}
Comparing \rf{thm:Decoupling-proof23} and \rf{thm:Decoupling-proof24},
we get \rf{corollary-YZ-main1} immediately.
%
\end{proof}

\begin{remark}
If we let $G=0$, then
\begin{align}
\dbE_s[Y(\cd)]|_{[s,T]}= P(s)\bar\cX(\cd,s),\q
Z(\cd,s)|_{[s,T]}= P(s)C(\cd,s)\bar\cX(s,s)+P(s)D(\cd,s)\Th(s)\bar\cX(\cd,s),
\end{align}
which is very similar to the results of classical stochastic LQ control problems (see \cite[Chapter 6]{Yong-Zhou1999}, for example).
Thus,  it should be more natural to view the solution $(Y(\cd),Z(\cd,\cd))$ as an element  in the space $C^*([0,T];\dbR^n)$.
\end{remark}

\section{Well-posedness of the path-dependent Riccati equation}\label{sect-wellposedness}

In this section, we shall establish the well-posedness of path-dependent Riccati equation \rf{Riccati} which is rewritten here, for convenience:
\bel{Riccati-Equation-RW}\left\{\ba{ll}
\ss\ds\dot P(t)+ P(t)A(\cd\,,t)\d_t+\d_t^\top A(\cd\,,t)^\top P(t)+\d_t^\top C(\cd\,,t)^\top P(t)C(\cd\,,t)\d_t+\d_t^\top Q(t)\d_t
\\
\ss\ds\qq-\big[D(\cd\,,t)^\top P(t)C(\cd\,,t)\d_t+B(\cd\,,t)^\top P(t)\big]^\top\big[D(\cd\,,t)^\top P(t)D(\cd\,,t)+R(t)\big]^{-1}\\
\ss\ds\qq\times\big[D(\cd\,,t)^\top P(t)C(\cd\,,t)\d_t+B(\cd\,,t)^\top P(t)\big]=0,\\
\ss\ds P(T)=G.\ea\right.\ee
We assume the following condition.

\begin{taggedassumption}{(H4)}\label{ass:H4}\rm
The weighting matrices $Q(\cd)$, $R(\cd)$ and $G$ satisfy
\bel{standard-condition}
Q(s)\ges 0,\q R(s)\ges\a I_m,\q s\in\dbT;\q G\ges 0,
\ee
where $\a>0$ is a given constant.
\end{taggedassumption}

\begin{theorem}\label{theorem:well-posedness-RE} Let {\rm \ref{ass:H1}--\ref{ass:H2}} and {\rm \ref{ass:H4}} hold.
Then the path-dependent Riccati equation \rf{Riccati-Equation-RW} admits a unique strongly regular solution $P(\cd)\in C([0,T];\cS^n)$. Moreover,
$$P(t)(\bx_t(\cd),\bx_t(\cd))\ges 0,\qq\forall(t,\bx_t)\in\L.$$

\end{theorem}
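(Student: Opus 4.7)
The plan is to construct $P(\cd)$ from the value function of Problem (LQ-FSVIE), derive the Riccati equation \rf{Riccati-Equation-RW} via the path-dependent HJB equation and the functional It\^o formula of Proposition \ref{Ito-RE}, and obtain uniqueness by a verification argument. First I would observe that under \ref{ass:H4} the convexity condition \ref{ass:H3} holds with the same $\a>0$: since $Q\ges0$ and $G\ges0$,
$$J(t,0;u(\cd))\ges\tfrac12\dbE\int_t^T\lan R(s)u(s),u(s)\ran ds\ges\tfrac{\a}{2}\dbE\int_t^T|u(s)|^2 ds,$$
so Corollary \ref{corollary:suffi-condition} yields, for each free pair $(t,\bx_t(\cd))\in\L$, the unique open-loop optimal control $\bar u=-\cM_2^{-1}\cM_1\bx_t$, and the representation \rf{cost-functional-rewrite} gives $V(t,\bx_t(\cd))=\frac12\lan(\cM_0-\cM_1^*\cM_2^{-1}\cM_1)\bx_t,\bx_t\ran\ges0$.

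Polarization of this non-negative quadratic form defines $P(t)\in\cS_+(\sX_t)$; Lemma \ref{lemma:Well-SVIE} combined with the uniform bound $\|\cM_2^{-1}\|\les\a^{-1}$ yields $\sup_{t\in[0,T]}\|P(t)\|_\cS<\i$, while continuity $P(\cd)\in C([0,T];\cS^n)$ follows from the continuous dependence of $\cM_0,\cM_1,\cM_2$ on $t$ (thanks to \ref{ass:H1}) together with Lemma \ref{lem:norm}. Strong regularity \rf{SRS} is then automatic, since $P(t)\ges0$ forces $D(\cd\,,t)^\top P(t)D(\cd\,,t)\ges0$, hence $R(t)+D(\cd\,,t)^\top P(t)D(\cd\,,t)\ges\a I_m$. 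To produce the equation, I would show that $v(t,\bx_t(\cd))\deq\frac12 P(t)(\bx_t(\cd),\bx_t(\cd))$ lies in $C^{1,2}_+(\L)$ with derivatives identified by \rf{dot-P}--\rf{v-x}; then the dynamic programming principle for FSVIEs of Viens--Zhang \cite{Viens-Zhang2019}, combined with the functional It\^o formula of Proposition \ref{Ito}, delivers the HJB equation \rf{HJB-PPDE1}, and the completion of the square performed in Section \ref{sect-Riccati} reduces it to \rf{HJB-PPDE2}, whence \rf{Riccati-Equation-RW} follows by matching coefficients through \rf{v-x}.

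For uniqueness, let $\wt P(\cd)\in C([0,T];\cS^n)$ be any strongly regular solution. Applying Proposition \ref{Ito-RE} to $r\mapsto\wt P(r)(\cX(\cd\,,r),\cX(\cd\,,r))$ along the auxiliary process \rf{auxi-pro} associated with an arbitrary $u(\cd)\in\sU[t,T]$, using that $\wt P$ satisfies \rf{Riccati-Equation-RW}, and completing the square in $u$ would produce
$$J(t,\bx_t(\cd);u(\cd))=\tfrac12\wt P(t)(\bx_t(\cd),\bx_t(\cd))+\tfrac12\dbE\int_t^T\lan\wt\BR(r)[u(r)-\wt\Th(r)\cX(\cd\,,r)],u(r)-\wt\Th(r)\cX(\cd\,,r)\ran dr,$$
where $\wt\BR(r)=R(r)+D(\cd\,,r)^\top\wt P(r)D(\cd\,,r)\ges\l I_m$ and $\wt\Th$ is the feedback associated with $\wt P$ via \rf{def-Th}. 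Taking the infimum over $u$ then forces $\frac12\wt P(t)(\bx_t(\cd),\bx_t(\cd))=V(t,\bx_t(\cd))=\frac12 P(t)(\bx_t(\cd),\bx_t(\cd))$ for every $\bx_t(\cd)\in\sX_t$, and polarization (as discussed after Lemma \ref{lem:norm}) yields $\wt P\equiv P$; the same identity also gives the asserted non-negativity of $P(\cd)$, since $V\ges0$.

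The hard part will be verifying the path-dependent smoothness $v\in C^{1,2}_+(\L)$ of the candidate value function, and in particular the existence of the right $t$-derivative $\dot P(t)$ through the non-trivial projection $[\bx_t]_{t+\d}$ of \rf{v(t+d)}--\rf{[x]}, since the ambient Banach space $\sX_t$ itself varies with $t$. This demands a delicate analysis of the $t$-dependence of the operators $\cM_0,\cM_1,\cM_2$ via the bounded differentiability of the coefficients in \ref{ass:H1}; once this regularity is secured, the HJB-to-Riccati derivation and the verification argument above become natural adaptations of classical LQ machinery to the bilinear, path-dependent setting of the present paper.
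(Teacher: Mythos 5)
Your proposal inverts the paper's logic, and the existence half contains a genuine gap. You define $P(t)$ by polarizing the value function $V(t,\bx_t(\cd))=\lan(\cM_0-\cM_1^*\cM_2^{-1}\cM_1)\bx_t,\bx_t\ran$ and then want to conclude that $P(\cd)$ solves \rf{Riccati-Equation-RW} by passing through the DPP, the HJB equation \rf{HJB-PPDE1} and coefficient matching via \rf{v-x}. But that step presupposes that $v(t,\bx_t(\cd))=\frac12P(t)(\bx_t(\cd),\bx_t(\cd))$ lies in $C^{1,2}_+(\L)$, and in particular that the right $t$-derivative $\dot P(t)$ in the sense of \rf{dot-P} exists and is continuous through the projection $[\bx_t]_{t+\d}$ between the $t$-dependent spaces $\sX_t$; without $\dot P(t)$ it is not even meaningful to say that your $P(\cd)$ ``satisfies'' \rf{Riccati-Equation-RW} in the sense of \autoref{def-SRS}. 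You explicitly defer exactly this point as ``the hard part'' and never prove it, yet it is the crux: the example in Subsection 2.3 shows that natural quadratic functionals on $\L$ may fail to admit $v_t$ off $\L^+$, the DPP of \cite{Viens-Zhang2019} does not by itself deliver classical regularity of $V$, and the paper's Section \ref{sect-Riccati} is careful to call the HJB-to-Riccati derivation formal. The paper's proof is organized precisely to avoid this: it rewrites \rf{Riccati-Equation-RW} as \rf{theorem:well-posedness-RE-proof1}, solves the path-dependent Lyapunov equation \rf{lem:Solvability-RE-main1} by a contraction argument (\autoref{lem:Solvability-RE}) with a probabilistic positivity representation, and then runs a monotone iteration: the differences $\D_i=P_i-P_{i+1}$ solve a Lyapunov equation whose inhomogeneity is the non-negative square $\Pi_i(t)^\top[D(\cd\,,t)^\top P_i(t)D(\cd\,,t)+R(t)]\Pi_i(t)$, so $P_i\ges P_{i+1}\ges0$; the limit is extended to a symmetric bilinear form by polarization (\autoref{lem:norm}), one passes to the limit in \rf{theorem:well-posedness-RE-proof2}, and strong regularity follows from $P\ges0$ and \ref{ass:H4}. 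Existence is thus purely analytic, and the classical solvability of \rf{HJB-PPDE2} is a corollary of the theorem rather than an input to it, which is the opposite of your architecture.

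Your uniqueness and non-negativity arguments are essentially sound and even more detailed than the paper's one-line remark: for any strongly regular solution $\wt P(\cd)$ (which by definition possesses $\dot{\wt P}$), the completion-of-square computation along \rf{auxi-pro} via \autoref{Ito-RE} is legitimate (this is in substance \autoref{thm:HJB-Riccati} and \autoref{thm:Decoupling}) and identifies $\wt P(t)(\bx_t(\cd),\bx_t(\cd))$ with $2V(t,\bx_t(\cd))\ges0$, whence uniqueness by polarization and the claimed positivity; likewise \rf{SRS} follows from $P\ges0$ and $R\ges\a I_m$. To repair the existence half, however, you must either prove the $C^{1,2}_+(\L)$ regularity of the value function directly, including the existence and continuity of $\dot P(t)$ under the moving projection (a delicate analysis the paper deliberately bypasses and you have only announced), or replace that portion of your argument with the paper's Lyapunov-equation iteration.
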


\begin{remark} For the corresponding results of  \autoref{theorem:well-posedness-RE} in the SDE setting, we refer the reader to \cite[Chapter 6]{Yong-Zhou1999}, in which \ref{ass:H4} was called a {\it standard condition}. It is known that \ref{ass:H4} implies that the uniformly convexity condition \ref{ass:H3} holds. An interesting question is whether Riccati equation \rf{Riccati-Equation-RW} has the well-posedness under \ref{ass:H3},
as did by Sun--Li--Yong \cite{Sun-Li-Yong2016} in the SDE setting. We shall explore that in the near future.
\end{remark}

To establish the well-posedness of \rf{Riccati-Equation-RW}, we introduce the following {\it path-dependent Lyapunov equation}:
\bel{lem:Solvability-RE-main1}\left\{\2n\ba{ll}
\ss\ds\dot P(t)+P(t)A(\cd\,,t)\d_t+\d_t^\top A(\cd\,,t)^\top P(t)+\d_t^\top C(\cd\,,t)^\top P(t)C(\cd\,,t)\d_t+\d_t^\top Q(t)\d_t\\
\ss\ds\q-\big[P(t)B(\cd,t)+\d_t^\top C(\cd,t)^\top P(t)D(\cd,t)\big]\Psi(t)-\Psi(t)^\top\big[B(\cd,t)^\top P(t)
+D(\cd,t)^\top P(t)C(\cd,t)\d_t\big]\\
\ss\ds\q+\Psi(t)^\top\big[R(t)+D(\cd\,,t)^\top P(t)D(\cd\,,t)\big]\Psi(t)=0,\\
\ss\ds P(T)=G,\ea\right.\ee
where $\Psi(t):\sX_t\to\dbR^m$ is a bounded linear functional for any $t\in[0,T]$. We call $P(\cd)\in C([0,T];\cS^n)$ a solution of \rf{lem:Solvability-RE-main1} if for any $\bx,\bx'\in\sX_0$, with $[\bx]_t=\bx(t){\bf1}_{[0,t)}+\bx{\bf1}_{[t,T]}$ and $[\bx']_t=\bx'(t){\bf1}_{[0,t)}+\bx'{\bf1}_{[t,T]}$,
\bel{Lyapunov}\left\{\2n\ba{ll}
\ss\ds[\bx]_t(\cd)^\top\1n\dot P(t)[\bx']_t(\cd)\1n+\1n[\bx]_t(\cd)^\top\1n P(t)A(\cd\,,t)\bx'(t)\1n+\1n\bx(t)^\top\1n A(\cd\,,t)^\top P(t)[\bx']_t(\cd)\1n\\
\ss\ds\q+\bx(t)^\top\1n C(\cd\,,t)^\top\1n P(t)C(\cd\,,t)\bx'(t)+\1n\bx(t)^\top\1n Q(t)\bx'(t)\\
\ss\ds\q-[\bx]_t(\cd)^\top P(t) B(\cd\,,t)\Psi(t)[\bx']_t(\cd)
-\bx(t)^\top C(\cd\,,t)^\top P(t) D(\cd\,,t)\Psi(t)[\bx']_t(\cd)\\
\ss\ds\q-\{\Psi(t)[\bx]_t(\cd)\}^\top B(\cd\,,t)^\top P(t)\bx'(t)-\{\Psi(t)[\bx]_t(\cd)\}^\top D(\cd\,,t)^\top P(t)C(\cd\,,t)\bx'(t)\\
\ss\ds\q+\{\Psi(t)[\bx]_t(\cd)\}^\top\1n R(t)\{\Psi(t)[\bx']_t(\cd)\}\1n+\{\Psi(t)[\bx]_t(\cd)\}^\top\1n D(\cd\,,t)^\top\1n P(t)D(\cd\,,t)\{\Psi(t)[\bx']_t(\cd)\}\1n=\1n0,\\
\ss\ds P(T)([\bx]_T,[\bx']_T)=\bx(T)^\top G\bx'(T).\ea\right.\ee

\begin{lemma}\label{lem:Solvability-RE} Let {\rm\ref{ass:H1}--\ref{ass:H2}} hold. Then the path-dependent  Lyapunov equation \rf{lem:Solvability-RE-main1} admits a unique solution $P(\cd)\in C([0,T];\cS^n)$. Moreover, if $ G\ges0$, $R(\cd)\ges0$ and $ Q(\cd)\ges 0$, we have
\bel{lem:Solvability-RE-main2}P(t)\big(\bx_t(\cd),\bx_t(\cd)\big) \ges0,\qq\forall\bx_t\in\sX_t.\ee
\end{lemma}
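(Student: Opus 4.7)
The plan is to exhibit $P(\cd)$ explicitly as the quadratic cost associated with the linear feedback $-\Psi$, and then use the functional It\^o formula \autoref{Ito-RE} both to verify the Lyapunov equation and to deliver uniqueness in one stroke. As a preliminary, I would regroup the cross terms in \rf{lem:Solvability-RE-main1} by square completion, writing it equivalently as
$$
\dot P(t) + \wt A(t)^\top P(t) + P(t)\wt A(t) + \wt C(t)^\top P(t)\wt C(t) + \d_t^\top Q(t)\d_t + \Psi(t)^\top R(t)\Psi(t) = 0,\q P(T)=G,
$$
where, for $\bx_t(\cd)\in\sX_t$, I set $\wt A(t)\bx_t(\cd)\deq A(\cd,t)\bx_t(t)-B(\cd,t)\Psi(t)\bx_t(\cd)$ and $\wt C(t)\bx_t(\cd)\deq C(\cd,t)\bx_t(t)-D(\cd,t)\Psi(t)\bx_t(\cd)$, both elements of $\sX_t$. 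By bilinearity of $P(t)$, the four cross terms in $C,D,\Psi$ recombine into $P(t)(\wt C(t)\bx_t,\wt C(t)\bx_t)$, and analogously for $A,B$. Crucially, $\wt A$ and $\wt C$ are precisely the drift and diffusion operators of the state equation \rf{state} driven by the control $\bar u(\t)=-\Psi(\t)\bar\cX(\cd,\t)$.

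For each $(t,\bx_t(\cd))\in\L$, let $(\bar X^{\bx_t}(\cd),\bar\cX^{\bx_t}(\cd,\cd))$ denote the unique solutions of the closed-loop system \rf{def-causal-feedback2}--\rf{def-causal-feedback3} with $\bar\Th$ replaced by $-\Psi$; well-posedness follows from \autoref{lemma:Well-SVIE} after absorbing the bounded linear feedback into the coefficients. Setting $\bar u^{\bx_t}(\t)\deq-\Psi(\t)\bar\cX^{\bx_t}(\cd,\t)$, define the candidate
$$
v(t,\bx_t(\cd))\deq\tfrac12\,\dbE_t\Big\{\2n\int_t^T\2n\big[\lan Q(\t)\bar X^{\bx_t}(\t),\bar X^{\bx_t}(\t)\ran\1n+\1n\lan R(\t)\bar u^{\bx_t}(\t),\bar u^{\bx_t}(\t)\ran\big]d\t\1n+\1n\lan G\bar X^{\bx_t}(T),\bar X^{\bx_t}(T)\ran\Big\}.
$$
Linearity of $\bx_t(\cd)\mapsto(\bar X^{\bx_t}(\cd),\bar u^{\bx_t}(\cd))$ makes $v(t,\cd)$ a quadratic form on $\sX_t$, so polarization (together with \autoref{lem:norm}) yields a unique symmetric bilinear $P(t)\in\cS^n_t$, uniformly bounded in $t$ via the $L^2$-estimate in \autoref{lemma:Well-SVIE}; continuous dependence of the closed-loop state on $(t,\bx_t(\cd))$ in the metric $\Bd$ then gives $P(\cd)\in C([0,T];\cS^n)$.

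To show that the candidate $P$ solves \rf{lem:Solvability-RE-main1} and that it is the only such solution, I would apply the second identity in \autoref{Ito-RE} to $r\mapsto P(r)(\bar\cX^{\bx_t}(\cd,r),\bar\cX^{\bx_t}(\cd,r))$. The rearranged Lyapunov equation is designed exactly so that the $dr$-drift collapses to $-\big[\lan Q(r)\bar X^{\bx_t}(r),\bar X^{\bx_t}(r)\ran+\lan R(r)\bar u^{\bx_t}(r),\bar u^{\bx_t}(r)\ran\big]$; integrating from $t$ to $T$, taking $\dbE_t$, and using $P(T)=G$ together with $\bar\cX^{\bx_t}(\cd,t)=\bx_t(\cd)$ recovers exactly $2v(t,\bx_t(\cd))$. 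This identifies any solution with the candidate $P$, delivering existence and uniqueness simultaneously. Non-negativity $P(t)(\bx_t(\cd),\bx_t(\cd))\ges0$ under $G,Q,R\ges0$ is then immediate from the representation.

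The principal obstacle is the It\^o application in the final step: it requires the candidate to satisfy $v\in C^{1,2}_+(\L)$ (equivalently $P(\cd)\in C^1([0,T];\cS^n)$ with the required growth and continuity). The path-derivatives $v_\bx,v_{\bx\bx}$ and their bounds follow from linearity of the closed-loop map and \autoref{lemma:Well-SVIE}; the delicate part is establishing existence and continuity of the right $t$-derivative $v_t$, which relies on the flow property \rf{flow-cX} for the closed-loop SVIE together with the differentiability of $(A,B,C,D)$ from \ref{ass:H1}, in the spirit of the functional-It\^o analysis of Viens--Zhang \cite{Viens-Zhang2019}.
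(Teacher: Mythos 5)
Your route is genuinely different from the paper's: you define $P(t)$ probabilistically as the quadratic cost of the closed-loop system with feedback $-\Psi$, and then want to verify the Lyapunov equation and extract uniqueness by one application of the functional It\^o formula. The regrouping into $\wt A,\wt C$ is correct, the uniqueness half of your argument is sound (for \emph{any} solution $P(\cd)$ of \rf{lem:Solvability-RE-main1}, $\dot P$ exists by the very definition of solution, $v_{\bx\bx}=P(t)$ is constant in $\bx$, so the hypotheses of \autoref{Ito-RE} are essentially automatic, and the drift collapse identifies $P(t)(\bx_t,\bx_t)$ with the closed-loop cost; polarization and \autoref{lem:norm} then pin down $P(t)$ on $\sX_t\times\sX_t$), and the non-negativity argument is exactly the paper's.

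The genuine gap is in the existence half, and it is the one you yourself flag as ``the principal obstacle'': to show that the probabilistically defined candidate solves \rf{lem:Solvability-RE-main1} you invoke \autoref{Ito-RE} on the candidate, but that requires knowing beforehand that the candidate lies in $C^1([0,T];\cS^n)$ (equivalently that $v\in C^{1,2}_+(\L)$, in particular that the right $t$-derivative $\dot P(t)$ in the sense of \rf{dot-P} exists and is continuous). Worse, the equation you are trying to verify \emph{contains} $\dot P$, so without this regularity the statement ``the candidate is a solution'' does not even parse. This is circular as written: It\^o is the tool you want to use to prove the candidate solves the equation, yet applying It\^o presupposes the regularity that, for a path-dependent value functional of an SVIE, is precisely the delicate analytic content (this is not a routine consequence of \ref{ass:H1} and the flow property \rf{flow-cX}; it would require a quantitative expansion of the closed-loop cost in $\d$, e.g.\ a dynamic-programming identity combined with bilinearity of $P(t+\d)$ and sharp $O(\d)$ estimates on the increments of $\bar\cX$, none of which you carry out). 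The paper avoids this issue entirely: it constructs the solution analytically, by writing the Lyapunov equation in integrated form, setting up the map $\G(\bar P)=P$ through \rf{lem:Solvability-RE-proof1}, and obtaining a contraction estimate $\sup_s\|\D P(s)\|\les K|T-t|\sup_s\|\D\bar P(s)\|$ via \autoref{lem:norm}; existence, uniqueness and the needed differentiability of the fixed point then come for free from the integral representation, and the closed-loop probabilistic representation is used only afterwards, for the sign of $P$, where the It\^o application is legitimate because $P$ is already known to solve the equation. To repair your proof you would either have to supply the missing regularity argument for the candidate (a nontrivial Viens--Zhang-type analysis) or replace the existence step by an analytic construction such as the paper's fixed-point scheme, keeping your It\^o argument only for uniqueness and positivity.
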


\begin{proof} We first prove the existence and the uniqueness of the solution to Lyapunov equation \rf{lem:Solvability-RE-main1}.
For any $\bx,\bx'\in\sX_0$ with $\|\bx(\cd)\|,\|\bx'(\cd)\|=1$, from \rf{Lyapunov} we have
$$\ba{ll}
\ss\ds[\bx]_t(\cd)^\top P(t)[\bx']_t(\cd)=\bx(T)^\top G\bx'(T)+\int_t^T\Big\{[\bx]_s(\cd)^\top P(s)A(\cd,s)\bx'(s)\\
\ss\ds\q+\bx(s)^\top A(\cd\,,s)^\top P(s)[\bx']_s(\cd)+\bx(s)^\top C(\cd\,,s)^\top P(s)C(\cd\,,s)\bx'(s)+\bx(s)^\top Q(s)\bx'(s)\\
\ss\ds\q-[\bx]_s^\top P(s)B(\cd\,,s)\Psi(s)[\bx']_s(\cd)
-\bx(s)^\top C(\cd\,,s)^\top P(s) D(\cd\,,s)\Psi(s)[\bx']_s(\cd)\\
\ss\ds\q-\{\Psi(s)[\bx]_s(\cd)\}^\top B(\cd\,,s)^\top P(s)\bx'(s)-\{\Psi(s)[\bx]_s(\cd)\}^\top D(\cd\,,s)^\top P(s)C(\cd\,,s)\bx'(s)\\
\ss\ds\q+\{\Psi(s)[\bx]_s(\cd)\}^\top R(s)\{\Psi(s)[\bx']_s(\cd)\}+\{\Psi(s)[\bx]_s(\cd)\}^\top D(\cd\,,s)^\top P(s)D(\cd,s)\{\Psi(s)[\bx']_s(
\cd)\}\Big\}ds.\ea$$
For any $\bar P(\cd)\in C([0,T];\cS^n)$, denote the operator-valued function $P(\cd)$ by
\bel{lem:Solvability-RE-proof1}\ba{ll}
\ss\ds[\bx]_t(\cd)^\top P(t)[\bx']_t(\cd)=\bx(T)^\top G\bx'(T)+\int_t^T\Big\{[\bx]_s(\cd)^\top\bar P(s)A(\cd,s)\bx'(s)\\
\ss\ds\q+\bx(s)^\top A(\cd\,,s)^\top \bar P(s)[\bx']_s(\cd)+\bx(s)^\top C(\cd\,,s)^\top \bar P(s)C(\cd\,,s)\bx'(s)+\bx(s)^\top Q(s)\bx'(s)\\
\ss\ds\q-[\bx]_s^\top\bar P(s)B(\cd\,,s)\Psi(s)[\bx']_s(\cd)
-\bx(s)^\top C(\cd\,,s)^\top\bar P(s) D(\cd\,,s)\Psi(s)[\bx']_s(\cd)\\
\ss\ds\q-\{\Psi(s)[\bx]_s(\cd)\}^\top B(\cd\,,s)^\top \bar P(s)\bx'(s)-\{\Psi(s)[\bx]_s(\cd)\}^\top D(\cd\,,s)^\top \bar P(s)C(\cd\,,s)\bx'(s)\\
\ss\ds\q+\{\Psi(s)[\bx]_s(\cd)\}^\top R(s)\{\Psi(s)[\bx']_s(\cd)\}+\{\Psi(s)[\bx]_s(\cd)\}^\top D(\cd\,,s)^\top\bar P(s)D(\cd,s)\{\Psi(s)[\bx']_s(
\cd)\}\Big\}ds.\ea\ee
From the fact that $\bar P(\cd)$ is symmetric, we see that $P(\cd)\in C([0,T];\cS^n)$. Thus, the map $\G:C([0,T];\cS^n)\to C([0,T];\cS^n)$
$$P(\cd)=\G\big(\bar P(\cd)\big)$$
is well-defined. For any $\bar P_1(\cd)$, $\bar P_2(\cd)\in C([0,T];\cS^n)$, denote
$$P_i(\cd)=\G(\bar P_i(\cd)),\q\D P(\cd)=P_1(\cd)-P_2(\cd),\q\D\bar P(\cd)=\bar P_1(\cd)-\bar P_2(\cd).$$
Then from \rf{lem:Solvability-RE-proof1}, taking $\bx^\prime=\bx$, we have
$$\ba{ll}
\ss\ds[\bx]_t(\cd)^\top\D P(t)[\bx]_t(\cd)=\int_t^T
\Big\{2[\bx]_s(\cd)^\top\D\bar P(s)A(\cd\,,s)\bx(s) -2[\bx]_s(\cd)^\top\D\bar P(s) B(\cd\,,s)\Psi(s)[\bx]_s(\cd)\\
\ss\ds\qq\qq\qq\qq+\bx(s)^\top C(\cd\,,s)^\top\D\bar P(s)C(\cd\,,s)\bx(s)-2\bx(s)^\top C(\cd\,,s)^\top\D\bar P(s) D(\cd\,,s)\Psi(s)[\bx]_s(\cd)\\
\ss\ds\qq\qq\qq\qq+\{\Psi(s)[\bx]_s(\cd)\}^\top D(\cd\,,s)^\top\D\bar P(s)D(\cd\,,s)\{\Psi(s)[\bx]_s(\cd)\}\Big\}ds.\ea$$
Thus, by \autoref{lem:norm}, we get
$$\sup_{s\in[t,T]}\|\D P(s)\|\les K|T-t|\sup_{s\in[t,T]}\|\D \bar P(s)\|,$$
where $K>0$, only depending on the norms of the coefficients, is a fixed constant. The by the contraction mapping theorem, the existence and the uniqueness of solutions to Lyapunov equation \rf{lem:Solvability-RE-main1} can be obtained.

\ms

We now prove \rf{lem:Solvability-RE-main2}.
Let $\cX(\cd\,,\cd)$ be the unique solution to the following SVIE:
\begin{align}
\cX(s,r)&=\bx_t(s)+\int_t^r [ A(s,\t)\cX(\t,\t)- B(s,\t)\Psi(\t)\cX(\cd,\t)] d\t\nn\\
&\q+\int_t^r  [C(s,\t)\cX(\t,\t)- D(s,\t)\Psi(\t)\cX(\cd,\t)] dW(\t),\q (s,r)\in\D_*[t,T].
\end{align}
Then by the functional It\^{o}'s formula (see \autoref{Ito-RE}), we have
\begin{align}
 P(s)\big(\cX(\cd,s),\cX(\cd,s)\big)&=\dbE_s\Big\{\int_s^T\big [\lan  Q(r)\cX(r,r),\cX(r,r)\ran+\lan  R(r)\Psi(r)\cX(\cd,r),\Psi(r)\cX(\cd,r)\ran\big] dr\nn\\
&\q +\lan G\cX(T,T),\cX(T,T)\ran\Big\}.
\end{align}
In particular, taking $s=t$, we have (noting $\cX(\cd,t)=\bx_t(\cd)$)
\begin{align}
P(t) \big(\bx_t(\cd),\bx_t(\cd)\big)&=\dbE_t\Big\{\int_t^T\big [\lan  Q(r)\cX(r,r),\cX(r,r)\ran+\lan  R(r)\Psi(r)\cX(\cd,r),\Psi(r)\cX(\cd,r)\ran\big] dr\nn\\
&\q +\lan G\cX(T,T),\cX(T,T)\ran\Big\}\ges 0,
\end{align}
where the last inequality is due to the facts $ G\ges0$, $ Q(\cd)\ges 0$ and $R(\cd)\ges 0$.
\end{proof}

\ms\noindent
{\bf Proof of \autoref{theorem:well-posedness-RE}.}
The uniqueness of the solution to Riccati equation \rf{Riccati-Equation-RW} can be obtained by a standard method.
We now prove the existence of a solution to Riccati equation \rf{Riccati-Equation-RW} by a iterative method.
Denote
\bel{}
\Psi(t)=[D(\cd\,,t)^\top P(t)D(\cd\,,t)+R(t)]^{-1}[D(\cd\,,t)^\top P(t)C(\cd\,,t)\d_t+B(\cd\,,t)^\top P(t)].
\ee
Then from the fact
\bel{}
[D(\cd\,,t)^\top P(t)D(\cd\,,t)+R(t)]^{-1}\Psi(t)=
[D(\cd\,,t)^\top P(t)C(\cd\,,t)\d_t+B(\cd\,,t)^\top P(t)],
\ee
it is easily checked that
\rf{Riccati-Equation-RW} is equivalent to the following
\bel{theorem:well-posedness-RE-proof1}\left\{\begin{aligned}
&\dot{P}(t)+P(t)A(\cd\,,t)\d_t+\d_t^\top A(\cd\,,t)^\top P(t)-P(t)B(\cd,t)\Psi(t)-\Psi(t)^\top B(\cd,t)^\top P(t)\nn\\
&\q+\d_t^\top C(\cd\,,t)^\top P(t)C(\cd\,,t)\d_t-\d_t^\top C(\cd,t)^\top P(t) D(\cd,t)\Psi(t)-\Psi(t)^\top D(\cd,t)^\top P(t)C(\cd,t)\d_t\\
&\q+\Psi(t)^\top D(\cd\,,t)^\top P(t)D(\cd\,,t)\Psi(t)+\d_t^\top Q(t)\d_t+\Psi(t)^\top R(t)\Psi(t)=0,\\
&P(T)= G.
\end{aligned}\right.\ee
By \autoref{lem:Solvability-RE}, with $\Psi(\cd)\equiv 0$, the following equation admits a unique solution
$P_0(\cd)$:
\bel{}\left\{\begin{aligned}
&\dot{P}_0(t)+P_0(t)A(\cd,t)\d_t+\d_t^\top A(\cd,t)^\top P_0(t)+\d_t^\top C(\cd,t)^\top P_0(t)C(\cd,t)\d_t+\d_t^\top Q(t)\d_t=0,\\
&P_0(T)= G.
\end{aligned}\right.\ee
Moreover, $P_0(\cd)$ satifies
\bel{}
 P_0(t)(\bx(\cd)|_{[t,T]},\bx(\cd)|_{[t,T]})\ges 0,\q\forall\bx\in C([0,T];\dbR^n),
\ee
which, together \ref{ass:H4}, implies
\bel{}
R(t)+D(\cd,t)^\top P_0(t)D(\cd,t)\ges \a I_m,\q t\in[0,T].
\ee
For $i=0,1,2,...$, define
\begin{align}
\Psi_i(t)=
[D(\cd\,,t)^\top P_i(t)D(\cd\,,t)+R(t)]^{-1}[D(\cd\,,t)^\top P_i(t)C(\cd\,,t)\d_t+B(\cd\,,t)^\top P_i(t)],\label{def-Psi-i}
\end{align}
with $P_i(\cd)$ being the unique solution to the following Lyapunov  equation:
\bel{theorem:well-posedness-RE-proof2}\left\{\begin{aligned}
&\dot{P}_{i+1}(t)+P_{i+1}(t)A(\cd\,,t)\d_t+\d_t^\top A(\cd\,,t)^\top P_{i+1}(t)-P_{i+1}(t)B(\cd,t)\Psi_{i}(t)\\
&\q-\Psi_{i}(t)^\top B(\cd,t)^\top P_{i+1}(t)+\d_t^\top C(\cd\,,t)^\top P_{i+1}(t)C(\cd\,,t)\d_t-\d_t^\top C(\cd,t)^\top P_{i+1}(t) D(\cd,t)\Psi_{i}(t)\\
&\q-\Psi_{i}(t)^\top D(\cd,t)^\top P_{i+1}(t)C(\cd,t)\d_t+\Psi_{i}(t)^\top D(\cd\,,t)^\top P_{i+1}(t)D(\cd\,,t)\Psi_{i}(t)\\
&\q+\d_t^\top Q(t)\d_t+\Psi_{i}(t)^\top R(t)\Psi_{i}(t)=0,\\
&P_{i+1}(T)= G.
\end{aligned}\right.\ee
Then by \autoref{lem:Solvability-RE} again,  we have
\bel{}
P_i(t)(\bx(\cd)|_{[t,T]},\bx(\cd)|_{[t,T]})\ges 0,\q\forall\bx\in C([0,T];\dbR^n),\q i\ges 0.
\ee
For $i\ges 1$, denote
\bel{}
\D_i(\cd)=P_i(\cd)-P_{i+1}(\cd),\q \Pi_i(\cd)=\Psi_i(\cd)-\Psi_{i-1}(\cd).
\ee
Then by \rf{theorem:well-posedness-RE-proof2}, we get
\bel{theorem:well-posedness-RE-proof3}\left\{\begin{aligned}
&\dot{\D}_{i}(t)+\D_{i}(t)A(\cd\,,t)\d_t+\d_t^\top A(\cd\,,t)^\top\D_{i}(t)-\D_{i}(t)B(\cd,t)\Psi_{i}(t)-\Psi_{i}(t)^\top B(\cd,t)^\top \D_{i}(t)\\
&\q+P_{i}(t)B(\cd,t)\Pi_{i}(t)+\Pi_{i}(t)^\top B(\cd,t)^\top P_{i}(t)+\d_t^\top C(\cd\,,t)^\top \D_{i}(t)C(\cd\,,t)\d_t\\
&\q-\d_t^\top C(\cd,t)^\top\D_{i}(t) D(\cd,t)\Psi_{i}(t)-\Psi_{i}(t)^\top D(\cd,t)^\top \D_{i}(t)C(\cd,t)\d_t\\
&\q+\d_t^\top C(\cd,t)^\top P_{i}(t) D(\cd,t)\Pi_{i}(t)+\Pi_{i}(t)^\top D(\cd,t)^\top P_{i}(t)C(\cd,t)\d_t\\
&\q+\Psi_{i}(t)^\top D(\cd\,,t)^\top \D_{i}(t)D(\cd\,,t)\Psi_{i}(t)+\Psi_{i-1}(t)^\top D(\cd\,,t)^\top P_{i}(t)D(\cd\,,t)\Psi_{i-1}(t)\\
&\q -\Psi_{i}(t)^\top D(\cd\,,t)^\top P_{i}(t)D(\cd\,,t)\Psi_{i}(t)+\Psi_{i-1}(t)^\top R(t)\Psi_{i-1}(t)-\Psi_{i}(t)^\top R(t)\Psi_{i}(t)=0,\\
&\D_{i}(T)= 0.
\end{aligned}\right.\ee
Note that
\begin{align}
&P_{i}(t)B(\cd,t)\Pi_{i}(t)+\Pi_{i}(t)^\top B(\cd,t)^\top P_{i}(t)+\d_t^\top C(\cd,t)^\top P_{i}(t) D(\cd,t)\Pi_{i}(t)\nn\\
&+\Pi_{i}(t)^\top D(\cd,t)^\top P_{i}(t)C(\cd,t)\d_t+\Psi_{i-1}(t)^\top D(\cd\,,t)^\top P_{i}(t)D(\cd\,,t)\Psi_{i-1}(t)\\
&-\Psi_{i}(t)^\top D(\cd\,,t)^\top P_{i}(t)D(\cd\,,t)\Psi_{i}(t)+\Psi_{i-1}(t)^\top R(t)\Psi_{i-1}(t)-\Psi_{i}(t)^\top R(t)\Psi_{i}(t)\nn\\
&\q=P_{i}(t)B(\cd,t)\Pi_{i}(t)+\Pi_{i}(t)^\top B(\cd,t)^\top P_{i}(t)+\d_t^\top C(\cd,t)^\top P_{i}(t) D(\cd,t)\Pi_{i}(t)\nn\\
&\qq+\Pi_{i}(t)^\top D(\cd,t)^\top P_{i}(t)C(\cd,t)\d_t+\Pi_i(t)^\top[D(\cd\,,t)^\top P_i(t)D(\cd\,,t)+R(t)]\Pi_i(t)\nn\\
&\qq-\Pi_i(t)^\top[D(\cd\,,t)^\top P_i(t)D(\cd\,,t)+R(t)]\Psi_i(t)-\Psi_i(t)^\top[D(\cd\,,t)^\top P_i(t)D(\cd\,,t)+R(t)]\Pi_i(t)\nn\\
&\q=\Pi_i(t)^\top[D(\cd\,,t)^\top P_i(t)D(\cd\,,t)+R(t)]\Pi_i(t)\ges 0.
\end{align}
Then by \autoref{lem:Solvability-RE} again, \rf{theorem:well-posedness-RE-proof3} admits a unique solution $\D_i(\cd)$ satisfying $\D_i(\cd)\ges 0$.
It follows that
\bel{theorem:well-posedness-RE-proof4}
 P_i(t)(\bx(\cd)|_{[t,T]},\bx(\cd)|_{[t,T]})\ges P_{i+1}(t)(\bx(\cd)|_{[t,T]},\bx(\cd)|_{[t,T]})\ges 0,\q\forall\bx\in C([0,T];\dbR^n),\q  i\ges 0.
\ee
Thus, for any $\bx\in C([0,T];\dbR^n)$ and $t\in[0,T]$, $ P_i(t)(\bx(\cd)|_{[t,T]},\bx(\cd)|_{[t,T]})$ is a decreasing sequence,
which is convergent as $i\to\i$.
Denote
\bel{theorem:well-posedness-RE-proof5}
 P(t)(\bx(\cd)|_{[t,T]},\bx(\cd)|_{[t,T]})\deq\lim_{i\to\i} P_i(t)(\bx(\cd)|_{[t,T]},\bx(\cd)|_{[t,T]}),\q \forall \bx\in C([0,T];\dbR^n),\q t\in[0,T].
\ee
By \autoref{lem:norm}, $P(\cd)$ can be uniquely extended to $C([0,T];\cS^n)$ by
\begin{align}
 P(t)(\bx_t(\cd),\bx_t^\prime(\cd))\deq{ P(t)\big (\bx_t(\cd)+\bx_t^\prime(\cd),\bx_t(\cd)+\bx_t^\prime(\cd)\big)
-P(t)\big (\bx_t(\cd)-\bx_t^\prime(\cd),\bx_t(\cd)-\bx_t^\prime(\cd)\big)\over 4},\nn\\
\q \forall \bx_t,\bx_t^\prime\in C([t,T];\dbR^n).
\label{theorem:well-posedness-RE-proof5-1}
\end{align}
Then using the fact that $P_i(\cd)$ is symmetric, from \rf{theorem:well-posedness-RE-proof5}--\rf{theorem:well-posedness-RE-proof5-1} we have
\bel{}
 P(t)(\bx(\cd)|_{[t,T]},\bx^\prime(\cd)|_{[t,T]})=\lim_{i\to\i} P_i(t)(\bx(\cd)|_{[t,T]},\bx^\prime(\cd)|_{[t,T]}),\q \forall \bx,\bx^\prime\in C([0,T];\dbR^n),\q t\in[0,T],
\ee
which, together with \rf{def-Psi-i}, implies that
\bel{}
\lim_{i\to\i} \Psi_i(t)=[D(\cd\,,t)^\top P(t)D(\cd\,,t)+R(t)]^{-1}[D(\cd\,,t)^\top P(t)C(\cd\,,t)\d_t+B(\cd\,,t)^\top P(t)],\q\forall t\in [0,T].
\ee
Moreover, by \autoref{lem:norm} and \rf{theorem:well-posedness-RE-proof4}, we have
\bel{}
\sup_{t\in [0,T]}\|P_i(t)\|\les 2\sup_{t\in [0,T]}\|P_0(t)\|<\i,\q i\ges 0,
\ee
and then
\bel{}
\sup_{t\in[0,T],i\ges 0}\|\Psi_i(t)\|<\i.
\ee
Taking $i\to\i$ in \rf{theorem:well-posedness-RE-proof2},
we see that $P(\cd)$ satisfies \rf{theorem:well-posedness-RE-proof1},
and hence \rf{Riccati-Equation-RW}.

\ms

By \autoref{theorem:well-posedness-RE}, we show that under \ref{ass:H1}--\ref{ass:H2} and \ref{ass:H4},
 the decoupling field of the optimality
system associated with Problem (LQ-FSVIE) really exists.
Combining \autoref{thm:HJB-Riccati} and \autoref{theorem:well-posedness-RE},
we get the following result.
It is noteworthy that the solvability of path-dependent HJB equation \rf{HJB-PPDE2} is proved by an analytic method.

\begin{corollary}
Let {\rm\ref{ass:H1}--\ref{ass:H2}} and {\rm\ref{ass:H4}} hold.
Then the path-dependent HJB equation \rf{HJB-PPDE2} admits a  classical solution.
\end{corollary}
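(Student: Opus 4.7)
The plan is to combine the two main structural results of the paper with no new analytic input. First, I would invoke \autoref{theorem:well-posedness-RE}: under \ref{ass:H1}, \ref{ass:H2} and \ref{ass:H4}, the path-dependent Riccati equation \rf{Riccati-Equation-RW} admits a unique solution $P(\cd)\in C([0,T];\cS^n)$ which is strongly regular in the sense of \autoref{def-SRS}, i.e.\ $R(t)+D(\cd\,,t)^\top P(t)D(\cd\,,t)\ges\l I_m$ for some $\l>0$. This is the crucial input because only a strongly regular $P(\cd)$ allows the quadratic term in the HJB equation to be formed via the inverse $[R(t)+D(\cd\,,t)^\top P(t)D(\cd\,,t)]^{-1}$.

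Next, I would define
$$v(t,\bx_t(\cd))={1\over2}P(t)\big(\bx_t(\cd),\bx_t(\cd)\big),\qq(t,\bx_t(\cd))\in\L,$$
and appeal to \autoref{thm:HJB-Riccati}. That result already performs the verification that $v_t$, $v_\bx$ and $v_{\bx\bx}$ may be computed via \rf{dot-P}--\rf{v-x} from the bilinear operator $P(t)$ and its derivative $\dot P(t)$, and that the identification
$$\bar\G(t,\bx_t(\cd))=-\big[D(\cd\,,t)^\top P(t)D(\cd\,,t)+R(t)\big]^{-1}\big[D(\cd\,,t)^\top P(t)C(\cd\,,t)\bx_t(t)+B(\cd\,,t)^\top P(t)\bx_t(\cd)\big]$$
for the minimizer in \rf{Causal-strategy-HJB} together with \rf{Riccati-Equation-RW} implies \rf{HJB-PPDE2} pointwise on $\L$. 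The terminal condition $v(T,\bx)={1\over2}\lan G\bx(T),\bx(T)\ran$ follows from $P(T)=G$.

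The only thing I would want to double-check is that the $P(\cd)$ produced by \autoref{theorem:well-posedness-RE} carries enough regularity to qualify $v$ as a \emph{classical} solution in the functional sense used in \rf{HJB-PPDE2}; this is where the main effort would go if the preceding machinery were less developed. However, continuity of $t\mapsto P(t)$ in the $\|\cd\|_\cS$ norm, together with differentiability $\dot P(t)$ built into the very formulation of the Riccati equation, guarantees that $v_t,v_\bx,v_{\bx\bx}$ exist and depend continuously on $(t,\bx_t(\cd))$ in the required sense, so $v\in C^{1,2}_+(\L)$. The conclusion then follows directly. I would not expect any serious obstacle, since the heavy lifting (construction of $P(\cd)$ via the Lyapunov iteration and the strong regularity bound) has already been carried out in \autoref{theorem:well-posedness-RE}.
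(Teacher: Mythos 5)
Your proposal matches the paper's own argument: the corollary is obtained precisely by combining \autoref{theorem:well-posedness-RE} (existence of the strongly regular solution $P(\cd)$ to the path-dependent Riccati equation under \ref{ass:H1}--\ref{ass:H2} and \ref{ass:H4}) with \autoref{thm:HJB-Riccati} (which verifies that $v(t,\bx_t(\cd))=\tfrac12 P(t)(\bx_t(\cd),\bx_t(\cd))$ is a classical solution of \rf{HJB-PPDE2}). Your additional regularity check is consistent with how the paper sets things up, so the proposal is correct and essentially identical to the paper's proof.
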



\section{Conclusion}
The main contribution of this paper is that we develop a decoupling method for
the  optimality system  associated with Problem (LQ-FSVIE)
and then represent the optimal control as a causal feedback of  the state process.
The key technique is establishing a link between the type-I and type-II BSVIEs,  deriving
the associated path-dependent Riccati equation, and proving the solvability of this new type of Riccati equations.
We believe that this paper can open the door to the Riccati-equation approach for the LQ problem of SVIEs.
Applying the mollification method, the main results obtained in the paper
still hold true for the case with some nonsmooth (or singular) coefficients;
Using the method developed in the paper, the framework can be further extended to the indefinite LQ optimal control problems for SVIEs,
the LQ  game problems for SVIEs, the LQ control/game problems for mean-field SVIEs, etc.
A still challenging problem is  the LQ optimal control problem for SVIEs with random coefficients.
Moreover, we hope that our approach of decoupling the optimality systems  and establishing the
well-posedness of the path-dependent Riccati (or HJB) equation can help us to solve some nonlinear problems.
We will report the related results in our future publications.

\end{document}